\numberwithin{equation}{section}
\newtheorem{theorem}{Theorem}[section]
\newtheorem{lemma}[theorem]{Lemma}
\newtheorem{proposition}[theorem]{Proposition}
\theoremstyle{definition}
\newcommand{\bR}{\mathbb R}
\newcommand{\Div}{\operatorname{div}}
\renewcommand{\epsilon}{\varepsilon}
\providecommand{\abs}[1]{\left\vert#1\right\vert}
\providecommand{\norm}[1]{\left\Vert#1\right\Vert}
\DeclareMathOperator{\diverge}{div}
\providecommand{\norm}[1]{\left\Vert#1\right\Vert}
\def\dt{\partial_t}
\def\na{\nabla}
\def\pa{\partial}
\def\intb{\int_{\Gamma}}
\def\intd{\int_{\Omega}}
\def\RRvert2{\right \vert\! \right\vert}
\def\Lvert3{\left \vert\!\left\vert\!\left\vert}
\def\Rvert3{\right \vert\!\right\vert\!\right\vert}
\def\dt{\partial_t}
\def\hal{\frac{1}{2}}
\def\ls{\lesssim}
\def\V{\mathcal{V}}
\def\Q{\mathcal{Q}}
\def\a{\mathcal{A}}
\def\fj1{\mathcal{J}^{-1}}
\def\X{\mathbb{X}}
\def\S{\mathbb{S}}
\def\bp{\bar\partial}
\def\ak{{\mathcal{A}^\kappa}}
\title[Free-boundary incompressible elastodynamics]{Local Well-posedness of Free-Boundary Incompressible Elastodynamics with Surface Tension via vanishing viscosity limit}
\author{Xumin Gu}
\address[X. Gu]{School of Mathematics\\ Shanghai University of Finance and Economics\\
Shanghai 200433, P.R.China}
\email{gu.xumin@shufe.edu.cn}
\author{Zhen Lei}
\address[Z. Lei]{School of Mathematical Sciences\\ Fudan University\\Shanghai 200433, P.R.China}
\email{zlei@fudan.edu.cn}
\keywords{Free boundary; Well-posedness; elastodynamics; Incompressible fluid; Vanishing viscosity limit }
\subjclass[2010]{35R35, 35L50, 35Q35, 76B03}
\begin{document}


\begin{abstract}
        In this paper, we consider the free boundary problem of incompressible elastodynamics, a coupling system of the Euler equations for the fluid motion with a transport equation for the deformation tensor. Under a natural force balance law on the free boundary with surface tension, we establish the well-posedness theory for a short time interval. Our method uses the vanishing viscosity limit by establishing a uniform a priori estimate with respect to the viscosity.  As a byproduct, the inviscid limit of the incompressible viscoelasticity (the system coupled with the Navier-Stokes equations) is also justified. We point out that the framework of establishing the well-posedness for water wave equations simply does not apply here, even for obtaining a priori estimates. Based on a crucial new observation about the inherent structure of the elastic term on the free boundary, we successfully develop a new estimate for the pressure and then manage to apply an induction method to control normal derivatives. This strategy also allows us to establish the vanishing viscosity limit in standard Sobolev spaces, rather than only in the conormal ones (cf. \cite{MasRou,Wang_15}). 

\end{abstract}

\maketitle

\section{Introduction}
The elastic effect is prevalent in complex fluids (e.g., liquid crystals), which results in many anomalous but important phenomena of fluids.  It can be described as a specific type of coupling between the transportation of the internal variable and the induced elastic stress. For incompressible neo-Hookean materials, the motion of a fluid can be described by the following elastodynamic system:
\begin{equation}
  \label{elastic}
\begin{cases}
\dt u  + u \cdot \nabla u + \nabla  p = \diverge (FF^\text{T})  &\text{in } \Omega (t),\\
\dt F + u \cdot \nabla F= \nabla u F&\text{in } \Omega (t),\\
\diverge u =0,\,\, \diverge (F^\text{T}) =0 &\text{in } \Omega (t).
\end{cases}
\end{equation}
In equations \eqref{elastic}, $u$ is the velocity field, $p$ is the scalar pressure, and $F=(F)_{ij}$ is the deformation tensor. $F^{\text{T}}=(F)_{ji}$ denotes the transpose of the matrix $F$, and $FF^{\text{T}}$ is the Cauchy-Green tensor for neo-Hookean elastic materials. $\Omega(t)$ denotes the domain occupied by the fluid at time $t$. Moreover, we adopt the following notation:
$$\diverge u=\partial_iu_i, \quad (\diverge F)_i=\partial_jF_{ij},\quad (\nabla u)_{ij}=\partial_j u_i, \quad
(\nabla uF)_{ij}=\partial_k u_iF_{kj}.$$
 Einstein's conventions over repeated indices are applied here and in what follows.

In this paper, we study a free boundary problem of \eqref{elastic} and prove its local well-posedness in two-dimensional spaces. We impose the following two boundary conditions on $\Gamma(t):=\pa\Omega(t)$: 
\begin{equation} \label{bdc}
\begin{cases}
V(\Gamma(t)) = u\cdot n \quad&\text{on } \Gamma(t),\\
 \left(-pI+(FF^{\text{T}}-I)\right)n=\sigma Hn\quad&\text{on }\Gamma(t),
\end{cases}
\end{equation}
where $n$ denotes the outward unit normal vector of $\Gamma(t)$, $H=-\nabla\cdot n$ is twice the mean curvature of $\Gamma(t)$, and $\sigma > 0$ is the surface tension coefficient. Here, the kinematic boundary condition \eqref{bdc}$_1$ states that the free boundary $\Gamma(t)$ moves with the fluid, where $V(\Gamma(t))$ denotes the normal velocity of $\Gamma(t)$. The dynamic boundary condition \eqref{bdc}$_2$ states the natural force balance law on the free boundary. Lastly, the initial conditions are given by
\begin{equation} \label{initial}
	(u, F)|_{t=0}=(u_0, F_0)\text{ on }\Omega(0)\text{ and } \Div u_0=0, \Div F_0^{\text{T}}=0.
\end{equation}

The study on the existence of solutions to the elastodynamic system has a long history, especially for the Cauchy problem. In $\bR^3$, the Hookean part of the incompressible elastodynamics is inherently degenerate and satisfies a null condition; and the global well-posedness of this system was established by Sideris and Thomases in \cite{ST_05, ST_06} with small initial data. For the two-dimensional case, the proof of the long-time existence for the elastodynamics is more difficult due to the weaker time decay rate. By observing the strong null structure of the system in Lagrangian coordinates, the second author \cite{Lei_16} proved the
global well-posedness using the energy method of Klainerman and Alinhac's ghost
weight approach. 

However, for the free boundary problem, the well-posedness theory, even the local version, seems far from being well understood. Only some a priori estimates and local well-posedness have been obtained under special boundary conditions (such as $p=0, F^\text{T}n=0$; see \cite{Hao_16, LWZ_2018, GW_18, Hu_19, Trak_18}). To the best of our knowledge, the problem of local existence under the force balance condition \eqref{bdc}$_2$ was left open. We also refer the reader to \cite{Chen_17,Chen_19} for the vortex sheet problem and \cite{CSW_20} for contact discontinuities in elastodynamics.

On the other hand, taking into account of the viscosity (for any fixed viscosity coefficent $\epsilon >0$), we have the incompressible viscoelasticity:
\begin{equation}
  \label{viscoelastic}
\begin{cases}
\dt u  + u \cdot \nabla u + \nabla  p-\epsilon\Delta u = \diverge (FF^{\text{T}})  &\text{in } \Omega (t),\\
\dt F + u \cdot \nabla F= \nabla u F&\text{in } \Omega (t),\\
\diverge u =0,\,\,\diverge (F^\text{T}) =0 &\text{in } \Omega (t).
\end{cases}
\end{equation}
In the whole space, the global well-posedness of the incompressible viscoelastic fluids near the equilibrium state was first obtained in \cite{LLZ_05} for the two-dimensional case (see also Lei and Zhou \cite{LZ_05}). For many related discussions, such as the three-dimensional case, and the initial-boundary problem, we refer the readers to \cite{Lei_08, Chen_06, LZ_08, L_12} and the references therein. 

For the free boundary problem of viscoelasticity, under the natural force balance law with the surface tension
\begin{equation} \label{visbdc}
 \left(-pI+(FF^{\text{T}}-I)\right)n+2\epsilon S(u)n=\sigma Hn\quad\text{on }\Gamma(t),
\end{equation}
well-posedness theories were established in \cite{Le,XZZ_13}. Here, $S(u)=\hal(\nabla u+\nabla u^{\text{T}})$ is the symmetric part of the gradient of $u$.  We emphasize that the method in \cite{Le,XZZ_13} depends greatly on the positivity viscosity. In the following discussion, we derive a new a priori estimate for the incompressible viscoelasticity, which is uniform with respect to the viscosity parameter $\epsilon$, and then a vanishing viscosity limit can be applied to establish the local well-posedness of \eqref{elastic}--\eqref{initial}.




%

\subsection{Reformulation in Lagrangian coordinates}To analyze \eqref{elastic}--\eqref{initial}, we transform the equation into Lagrangian coordinates, and then the problem is formulated on a fixed domain. The Lagrangian domain is given by
\begin{equation*}
\label{domain}
\Omega=\mathbb{T}\times(0,1),
\end{equation*}
where $\mathbb{T}$ denotes the 1-torus. The boundary of $\Omega$ is then given by the horizontally flat bottom and top:
\begin{equation*}
\Gamma =\mathbb{T} \times \left(\{0\}\cup \{1\}\right).
\end{equation*}
$N$ is the outward unit normal of $\Gamma$:
\begin{equation*}
N=e_2 \text{ when }x_2=1, \text{ and }N=-e_2\text{ when }x_2=0.
\end{equation*}

Now let $\eta(x,t)\in\Omega(t)$ denote the ``position'' of the fluid particle $x$ at time $t$; i.e.,
\begin{equation*}
\begin{cases}
\partial_t\eta(x,t)=u(\eta(x,t),t),\quad t>0,
\\ \eta(x,0)=\eta_0(x)
\end{cases}
\end{equation*}
where $\eta_0(x)$ is a volume-preserving diffeomorphism: 
\begin{equation*}
	\begin{split}
		\eta_0: \Omega&\rightarrow\Omega(0)\\
		x&\mapsto y
\end{split}
\end{equation*}
that satisfies $\text{det}\nabla\eta_0=1$ and $F_0(y)=\nabla\eta_0(\eta_0^{-1}(y))$. 

With the Lagrangian unknowns $v(x,t)=u(\eta(x,t),t)$, $F(x,t)=\nabla_x\eta(x,t)$ and $q(x,t)=p(\eta(x,t),t)$, the elastodynamic system \eqref{elastic}--\eqref{bdc} becomes the following:
\begin{equation}
\label{eq:elastic}
\begin{cases}
\partial_t\eta =v &\text{in } \Omega,\\
\partial_tv +\nabla_\eta q =\Delta\eta &\text{in } \Omega,\\
 \Div_\eta v = 0 &\text{in  }\Omega,\\
 -(q+1)\a N+FN=\partial_1\left(\dfrac{\pa_1\eta}{\abs{\pa_1\eta}}\right) & \text{on  }\Gamma,\\
 (\eta,v)\mid_{t=0} =(\eta_0, v_0), \,\,\text{det}\nabla\eta_0=1, \Div_{\eta_0}v_0=0.
 \end{cases}
\end{equation}
Here, $F_{ij}=(\nabla\eta)_{ij}$, $F^{-1}$ is the inverse of $F$, $J$ is the determinant of $F$, and $\a=JF^{-\text{T}}$. From the incompressibility condition, $J=1$ holds for all times. The differential operators are $\nabla_\eta:=(\partial_{\eta_1}, \partial_{\eta_2})$ with $\partial_{\eta_i}=F^{-1}_{ji}\partial_j$ and $\Div_\eta g=F^{-1}_{ji}\partial_jg_i$.

We now give the derivation of the boundary condition \eqref{eq:elastic}$_4$ from \eqref{bdc}. Without loss of generality, we set the surface tension coefficient $\sigma=1$. According to the kinematic boundary condition \eqref{bdc}$_1$, we have that 
\begin{equation*}
\Gamma(t)=\eta(t)(\Gamma).
\end{equation*}
Then, the unit tangent of $\Gamma(t)$ is $\tau=\frac{\pa_1\eta}{\abs{\pa_1\eta}}$ and the unit normal is $\mathfrak{n}=\frac{\pa_1\eta^\perp}{\abs{\pa_1\eta}}=\frac{(-\pa_1\eta_1,\pa_1\eta_2)^{\text{T}}}{\abs{\pa_1\eta}}$. By denoting $\pa_{\mathfrak n}:=\mathfrak n\cdot \nabla_\eta$, $\pa_\tau:=\tau\cdot \nabla_\eta$, we obtain
\begin{equation*}
\begin{split}
Hn=-\left(\nabla_\eta\cdot n\right)n=&-\left(\nabla_\eta\cdot \mathfrak{n}\right)\mathfrak{n}\\=&-\left(\mathfrak n\mathfrak n\cdot \nabla_\eta+\tau\tau\cdot\nabla_\eta \right)\cdot \mathfrak n\mathfrak n
\\=&\left(-\mathfrak n\cdot \pa_{\mathfrak n} \mathfrak n-\tau\cdot\pa_\tau \mathfrak n\right)\mathfrak n\\=&\left(\mathfrak n\cdot \pa_\tau \tau\right) \mathfrak n\\=&\, \pa_\tau\tau
\end{split}
\end{equation*}
by using the fact that $\pa_{\mathfrak n} \mathfrak n=0$, $\tau\cdot\pa_\tau\mathfrak n+\mathfrak n\cdot \pa_\tau\tau=0$ and $\pa_\tau \tau$ parellel to $\mathfrak n$.
On the other hand, we have $\pa_\tau=\tau\cdot\nabla_\eta=\dfrac{F_{i1}}{\abs{\pa_1\eta}}F^{-1}_{ji}\pa_j=\dfrac{1}{\abs{\pa_1\eta}}\pa_1$, the outward unit normal $n=\frac{\a N}{\abs{\a N}}$ and $\abs{\a N}=\abs{\pa_1 \eta}$. Substituting the above calculations into \eqref{bdc}$_2$, we arrive at \eqref{eq:elastic}$_4$.

\subsection{Main results}Before we state our main results, let us first discuss the issue of compatibility conditions for the initial data $(\eta_0, v_0)$. We work in a high-regularity context, essentially with regularity up to three temporal derivatives. This requires us to use $(\eta_0, v_0)$ to construct the initial data $\pa_t^jv(0)$ for $j = 1, 2, 3$ and $\partial_t^j q(0)$ for $j = 0, 1, 2$.
These other data must satisfy the conditions essentially obtained by applying $\pa_t^j$ to the equation \eqref{eq:elastic}$_2$ and then setting $t = 0$, which in turn requires $(\eta_0, v_0)$ to satisfy
three nonlinear compatibility conditions. We describe these conditions in detail in Section \ref{comptatible} and state them explicitly in \eqref{zcomp}, \eqref{1comp} and \eqref{2comp}. It is worth noting that these compatibility conditions will cause non-trivial difficulties in the construction of approximate solutions.

To state our results, 
we define the energy as
\begin{equation*}
\mathfrak{E}(t):=\sum_{j=0}^3\left(\norm{\pa_t^j\nabla\eta}_{H^{3-j}(\Omega)}^2+\norm{\pa_t^{j}v}_{H^{3-j}(\Omega)}^2+\abs{\pa_t^j\pa_1^{4-j}\eta\cdot n}_{L^2(\Gamma)}^2\right),
\end{equation*}
where the usual $L^2$ and Sobolev spaces $H^m$ are used. 
Now, the local well-posedness theory for the elastic system \eqref{eq:elastic} is stated as follows:
\begin{theorem} \label{thm}
	Suppose that the initial data $(\eta_0, v_0)$ satisfy the compatibility conditions \eqref{zcomp}, \eqref{1comp} and \eqref{2comp} and that $\{\eta_0\in H^4(\Omega), \pa_t^j v (0)\in H^{3-j}(\Omega), \pa_t^j\pa_1^{4-j}\eta\cdot n(0) \in L^2(\Gamma), 0\leq j\leq 3\}$. Then, there exists $T_0>0$ and a unique solution $(\eta,v, q)$ to the free boundary elastodynamics \eqref{eq:elastic} on the time interval $[0, T_0]$ that satisfies
\begin{equation}\label{enesti}
	\sup_{t \in [0,T_0]} \mathfrak E(t) \leq P\left(\mathfrak E(0)\right),
\end{equation}
where $P$ is a generic polynomial.
\end{theorem}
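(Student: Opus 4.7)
The overall strategy is vanishing viscosity: introduce a positive parameter $\epsilon>0$ and consider the Lagrangian reformulation of the viscoelastic system \eqref{viscoelastic}--\eqref{visbdc}. For each fixed $\epsilon$, local existence and uniqueness of a solution $(\eta^\epsilon, v^\epsilon, q^\epsilon)$ to this regularized problem is known (see \cite{Le,XZZ_13}). The task then reduces to establishing an a priori estimate of the form $\sup_{t\in[0,T_0]} \mathfrak{E}^\epsilon(t)\le P(\mathfrak{E}(0))$ on a time interval $T_0>0$ that is \emph{independent of} $\epsilon$, followed by a compactness argument and passage to the limit. Along the way one must construct approximate initial data for the viscous problem that satisfy its own compatibility conditions (which are stronger than \eqref{zcomp}, \eqref{1comp}, \eqref{2comp}) and converge, as $\epsilon\to 0$, to the given inviscid data in the norms dictated by $\mathfrak{E}$.

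For the uniform estimate I would proceed in two stages. In the tangential/temporal stage I would differentiate \eqref{eq:elastic} $j$ times in $t$ for $j=0,1,2,3$, test against $\partial_t^j v$, and integrate by parts. The divergence-free constraint absorbs the pressure in the interior. The interior contribution from the elastic term, which in Lagrangian coordinates reduces to the plain Laplacian $\Delta\eta$, yields exactly the coercive quantity $\tfrac12\tfrac{d}{dt}\|\partial_t^j\nabla\eta\|_{L^2}^2$; the boundary contribution, combined with the surface-tension term in \eqref{eq:elastic}$_4$ and the same geometric cancellation used in deriving that boundary condition, produces the positive surface-tension energy $\tfrac12\tfrac{d}{dt}|\partial_t^j\partial_1^{4-j}\eta\cdot n|_{L^2(\Gamma)}^2$ modulo commutator remainders of lower order. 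The viscous dissipation $\epsilon\|\nabla\partial_t^j v\|_{L^2}^2$ carries a favorable sign and is simply discarded for the upper bound; since it never enters the limiting estimate, no conormal functional framework is needed here.

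The main obstacle, and the genuinely new part of the argument, is recovering full spatial regularity from tangential/temporal control. Unlike in the water-wave setting, direct Dirichlet-to-Neumann calculus on the pressure fails because the elastic stress $FN$ on $\Gamma$ contains a normal derivative of $\eta$, and a naive elliptic estimate for $q$ would lose a derivative. I would therefore exploit the algebraic structure of $FN$ on $\Gamma$ highlighted by the authors: decomposing $FN$ into a part that pairs cleanly with the curvature term and a part that compensates the pressure jump, one obtains a \emph{no-loss} pressure bound in which $\|q\|$ at each order is controlled purely by the already-estimated tangential part of $\mathfrak{E}$. Armed with this, one then performs an induction on the number of normal derivatives: the momentum equation rewritten as $\Delta\eta=\partial_t v+\nabla_\eta q$, supplemented by the incompressibility constraints $\diverge_\eta v=0$ and $J=1$, is an elliptic div-curl system for $\eta$, so each step trades a tangential derivative already controlled for a normal one. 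Iterating four times closes the estimate for $\mathfrak{E}^\epsilon$ uniformly in $\epsilon$, and this is where I expect the bulk of the technical work to live.

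With the uniform bound in hand the passage to the limit is standard. Uniform $H^s$ bounds in space together with uniform bounds on three time derivatives give Aubin--Lions compactness and hence strong convergence of $(\eta^\epsilon,v^\epsilon)$ in $C([0,T_0];H^{3-\delta}(\Omega))$ for any $\delta>0$, sufficient to pass to the limit in every nonlinear term. The viscous boundary contribution $2\epsilon S(v^\epsilon)n$ vanishes in $L^2(\Gamma)$ since $\sqrt{\epsilon}\,\nabla v^\epsilon$ is uniformly bounded, so the limit $(\eta,v,q)$ satisfies the inviscid dynamic condition \eqref{eq:elastic}$_4$. The full regularity claimed in \eqref{enesti} follows by lower semicontinuity of the norms appearing in $\mathfrak{E}$, and uniqueness is obtained by a routine energy estimate on the difference of two solutions performed in a norm one derivative weaker than the existence estimate.
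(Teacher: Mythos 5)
Your overall architecture (vanishing viscosity, $\epsilon$-independent estimates split into tangential energy plus a no-loss pressure bound plus induction on normal derivatives via $\Delta\eta=\partial_t v+\nabla_\eta q$, then compactness) matches the paper, and you correctly locate the pressure/normal-derivative induction as a key novelty. But there is a genuine gap in your tangential stage: you claim that after differentiating and integrating by parts the pressure is ``absorbed'' by the divergence constraint and that the boundary contribution reduces, modulo \emph{lower-order} commutators, to $\tfrac12\tfrac{d}{dt}\abs{\partial_t^j\partial_1^{4-j}\eta\cdot n}_{L^2(\Gamma)}^2$. This is precisely where the argument breaks. Because the force balance law \eqref{eq:elastic}$_4$ is vector-valued, the boundary integral produced by integration by parts contains the term $\int_{\Gamma}\mathfrak{B}\,\bar\partial^3(\mathcal{A}_{i\ell}N_\ell)\,\bar\partial^3 v_i\,d\sigma$ with $\mathfrak{B}=q+1+\partial_1^2\eta_k\mathcal{A}_{k\ell}N_\ell/\abs{\partial_1\eta}^3$, which is \emph{not} lower order: it pairs a top-order derivative of $\partial_1\eta^\perp$ with a top-order derivative of $v$ on $\Gamma$ and apparently loses a derivative. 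The paper's treatment requires Wu's device of integrating this term in time together with the antisymmetric identity $\partial^a\eta^\perp\cdot\partial^b\eta=-\partial^b\eta^\perp\cdot\partial^a\eta$ (see \eqref{anti}--\eqref{in4}) to exhibit a hidden symmetric structure, and, for the purely temporal case $\partial_t^3$ (where no bound on $\partial_t^3 q$ or $\abs{\partial_t^3\partial_1\eta\cdot n}_{1/2}$ is available), Alinhac's good unknowns \eqref{gun}. Relatedly, applying only $\partial_t^j$ and testing with $\partial_t^j v$ cannot generate the boundary energies $\abs{\partial_t^j\partial_1^{4-j}\eta\cdot n}_{L^2(\Gamma)}$ for $j<3$; one must use mixed tangential derivatives $\bar\partial^3\in\{\partial_t^j\partial_1^{3-j}\}$, and the two families ($\bar\partial^2\partial_1$ versus $\partial_t^3$) genuinely require different arguments.

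A second, smaller divergence concerns compatibility. You propose to construct viscous initial data satisfying the viscoelastic compatibility conditions \eqref{compforvisco} and converging to $(\eta_0,v_0)$; the paper instead smooths the data so that they satisfy only the \emph{inviscid} conditions \eqref{zcomp}, \eqref{1comp}, \eqref{2comp} and then modifies the viscous system by an external force $f^\epsilon=\phi^\epsilon+\epsilon\nabla\cdot\Psi$, with $\Psi$ built from $S_\eta(v)\mathcal{A}$ at $t=0$, precisely so that the smoothed data are compatible with the \emph{modified} system. Producing data that satisfy the $\epsilon$-dependent conditions \eqref{compforvisco} while converging in the topology of $\mathfrak{E}$ is itself a nontrivial construction that your proposal leaves unaddressed, so as written this step is an unsubstantiated assertion rather than a proof. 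Finally, note that the uniform viscous energy controls $\partial_t^3 v$ only in $L^4_t(L^2)$, so the stated bound \eqref{enesti} does not follow merely by lower semicontinuity in the limit: the paper redoes the a priori estimate on the limiting inviscid solution to upgrade the top time derivatives to $L^\infty_t$.
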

The proof of Theorem \ref{thm} is based on the vanishing viscosity method by establishing the local well-posedness of the associated viscoelasticity, which is uniform with respect to the viscosity. Note that the initial data $(\eta_0, v_0)$ for the elastodynamics should satisfy the compatibility conditions in Theorem \ref{thm}, which are not qualified for the viscoelasticity. Thus, the viscoelasticity is modified by adding an external smoothing force $f^\epsilon$ to serve as the approximate system. In this way, the compatibility conditions of the modified viscoelastic system will be satisfied by the regularized initial data $(\eta_0^\epsilon, v_0^\epsilon)$. By taking the limit $\epsilon\rightarrow 0$, it is then proven that $f^\epsilon \rightarrow 0$, and the modified viscoelastic system converges to the elastodynamics (see Theorem \ref{th43}). 

Now we state a byproduct, which is concise but very interesting in itself: when the initial data $(\eta_0, v_0)$ satisfy the compatibility conditions for the viscoelasticity (with $f^\epsilon=0$), the inviscid limit is justified by the following theorem:
\begin{theorem} \label{vanish}
	For sufficiently small $\epsilon$, $(\eta^\epsilon, v^\epsilon,q^\epsilon)$ is a solution to the viscoelasticity system \eqref{eq:viscoelastic} with initial data $(\eta_0, v_0)$ satisfying the compatibility condition \eqref{compforvisco}. Then, as $\epsilon \rightarrow 0$, $(\eta^\epsilon, v^\epsilon, q^\epsilon)$ converges to the limit $(\eta, v, q)$, which is the unique solution to \eqref{eq:elastic} on a time interval $[0, T_1]$ independent of $\epsilon$.
\end{theorem}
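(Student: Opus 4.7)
The plan is to deduce Theorem \ref{vanish} as a rather direct consequence of the uniform a priori estimate that is (by hypothesis of the paper) established in the course of proving Theorem \ref{thm}. Since $(\eta_0, v_0)$ now satisfies the stronger compatibility conditions \eqref{compforvisco} built for the viscoelastic system with $f^\epsilon \equiv 0$, no external smoothing force is needed; the solutions $(\eta^\epsilon, v^\epsilon, q^\epsilon)$ of \eqref{eq:viscoelastic} are produced by the well-posedness theory of \cite{Le,XZZ_13} for each fixed $\epsilon > 0$, and they inherit the energy functional $\mathfrak{E}$ as a common yardstick.

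First I would verify that the initial energies $\mathfrak{E}^\epsilon(0)$ are uniformly bounded in $\epsilon$. The quantities $\partial_t^j v^\epsilon(0)$ are defined by successively differentiating \eqref{eq:viscoelastic}$_2$ in $t$ and setting $t=0$; each extra viscous term that appears is preceded by a factor of $\epsilon$, so that, starting from the common regularity $\eta_0 \in H^4(\Omega)$ and $v_0 \in H^3(\Omega)$, one obtains $\mathfrak{E}^\epsilon(0) \le C$ uniformly. Next I would invoke the $\epsilon$-independent energy inequality that is the technical heart of Theorem \ref{thm} (derived through the new pressure estimate and the induction on normal derivatives advertised in the abstract). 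This yields a Gronwall-type bound of the form $\mathfrak{E}^\epsilon(t) \le P(\mathfrak{E}^\epsilon(0)) + \int_0^t P(\mathfrak{E}^\epsilon(s))\,ds$, hence a time $T_1 > 0$ depending only on $\mathfrak{E}^\epsilon(0)$, and in particular independent of $\epsilon$, on which $\mathfrak{E}^\epsilon$ stays uniformly bounded.

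With the uniform bound in hand I would pass to the limit. Using Aubin-Lions compactness with the uniform control of $(\eta^\epsilon, v^\epsilon)$ in $L^\infty_t H^4 \times L^\infty_t H^3$ and of $(\partial_t \eta^\epsilon, \partial_t v^\epsilon)$ in $L^\infty_t H^3 \times L^\infty_t H^2$, a subsequence $\epsilon_k \to 0$ converges strongly in sub-critical Sobolev norms to a limit $(\eta, v)$, while the pressures $q^{\epsilon_k}$ converge weakly after solving the elliptic problem that $q$ satisfies at each time. Strong convergence is enough to identify every nonlinearity, including the boundary curvature term $\partial_1(\partial_1 \eta / |\partial_1 \eta|)$, for which uniform control of $\partial_t^j \partial_1^{4-j} \eta \cdot n$ in $L^\infty_t L^2(\Gamma)$ is essential. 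The viscous contribution $2\epsilon S(v^\epsilon) n$ at the boundary vanishes in $\epsilon$ because $\epsilon \norm{v^\epsilon}_{H^3}$ is bounded. The limit therefore solves \eqref{eq:elastic} with the prescribed initial data. Finally, the uniqueness asserted in Theorem \ref{thm} identifies the limit with \emph{the} elastodynamic solution, so that the full family (not merely a subsequence) converges, on the common interval $[0, T_1]$.

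The main obstacle is checking that the initial energy $\mathfrak{E}^\epsilon(0)$ is bounded uniformly in $\epsilon$: the relations defining $\partial_t^j v^\epsilon(0)$ differ from their inviscid counterparts by commutators involving $\epsilon \Delta$ applied to iteratively defined time derivatives, and one must confirm that no hidden factor of $\epsilon^{-1}$ appears when these are propagated through the nonlinear compatibility conditions. A secondary point of care is obtaining enough compactness at the moving boundary to pass to the limit in the curvature term; this rests on the \emph{a priori} control of the extra trace quantity $\partial_t^j \partial_1^{4-j}\eta \cdot n$ already incorporated into $\mathfrak{E}$.
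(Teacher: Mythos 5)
Your proposal follows essentially the same route as the paper: since the data satisfy \eqref{compforvisco}, the smoothing force $f^\epsilon$ is unnecessary, so the viscosity-independent estimates of Theorem \ref{th43} apply verbatim to \eqref{eq:viscoelastic}, and one passes to the limit and identifies it with the unique solution of \eqref{eq:elastic} exactly as in the proof of Theorem \ref{thm}. The extra details you supply (uniform bound on $\mathfrak{E}^\epsilon(0)$, compactness, uniqueness of the limit) are consistent with, and merely flesh out, the paper's brief argument.
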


It is worth noting that the justification of the inviscid limit is quite complicated when there is a boundary. The gist of our analysis is to take advantage of the inherent structure of the elastic term on the free boundary, which also enables us to perform viscosity-independent a priori estimates in standard Sobolev spaces. This is significantly different from the fluid counterpart (such as \cite{MasRou,Wang_15}), in which the vanishing viscosity limit is normally derived in the conormal Sobolev spaces due to the existence of the boundary layer. The cause of such difference is: The elastic term $FF^T$ is essential in controlling the normal derivatives regardless of the viscosity parameter $\epsilon$; however, such a term does not exist in the Euler/Navier-Stokes equations. It may be argued that if $F=I$ at the initial time, the elastic term seems to disappear; hence, the free surface viscoelastic system reduces to a standard free surface Navier-Stokes system. Nevertheless, normally $F=I$ cannot be maintained due to the evolution equation of the deformation tensor, unless $u\equiv0, F\equiv I$, which is a trivial solution to the system. In this sense, the effect of the elastic term will still emerge in the system when $t>0$ and the elastodynamics/viscoelastodynamics with corresponding dynamic boundary conditions cannot be reduced to the free surface Euler/Navier-Stokes systems by taking $F$ as the identity initially.



\subsection{Strategy of the proof}\label{strategy}
To establish the local well-posedness, one may naturally hope to derive certain a priori estimates and then construct the approximate system which is asymptotically consistent with the a priori estimates. However, such a procedure can be highly nontrivial for a free-boundary problem of inviscid systems. Without the viscosity,  successful a priori estimates normally rely on the geometric transport-type structures of the nonlinear system, which are not easy to find. Furthermore, such structures are generally not valid in the linearized approximation; hence, the construction of approximate solutions becomes a difficult challenge. For instance, see \cite{CL_00, Lindblad05} for the case of the Euler equations. 

Regarding elastodynamics, we encounter great difficulties due to the the presence of elastic stress. More specifically, the force balance law shows a complicated interaction between the pressure and elasticity on the boundary. Recall the balance law in Lagrangian coordinates: 
\begin{equation}
\label{p1b}
 -(q+1)\a N+F N=\partial_1\left(\dfrac{\pa_1\eta}{\abs{\pa_1\eta}}\right).
\end{equation}
It has a vector form now, not the scalar one often occurring in inviscid systems, e.g., the Euler equations. This vector form balance law brings several technical difficulties in the process of obtaining local well-posedness; however, such difficulties can be overcome by studying the particular inherent structure of the law, as explained below.

Similar to the usual treatment of such free-boundary problems, our a priori estimates have two parts: tangential derivatives and normal derivatives. 
For the estimates of the tangential derivatives $\bp:=\pa_1,\pa_t$, we adopt the usual $L^2$-type energy estimate, which gives
\begin{equation*}
	\begin{split}
	\hal\dfrac{d}{dt}\int_{\Omega}\abs{\bp^3 v}^2+\abs{\nabla\bp^3\eta}^2+\int_{\Gamma}\left(\bp^3q\a N-\bp^3F N\right)\cdot\bp^3 v\,d\sigma=\sum\nolimits_q.
\end{split}
\end{equation*}
Here, $\sum_q$ stands for all the terms related to $q$.
The most difficult term is the boundary integral coming from integration by parts. To handle this term, the force balance law is introduced, yielding 
\begin{equation}\label{fea}
	\begin{split}\int_{\Gamma}\left(\bp^3q\a N-\bp^3F N\right)\cdot\bp^3 v\,d\sigma=&-\underbrace{\int_{\Gamma}\mathfrak B\bp^3(\a N)\cdot\bp^3v\,d\sigma}_{\text{ET}}\underbrace{-\int_{\Gamma}\bp^3\left(\dfrac{\partial_1^2\eta_k\a_{k2}}{\abs{\pa_1\eta}^3}\right)\a_{i2}\bp^3v_i\, d\sigma}_{\text{ST}}\\&+\intb \left[\bp^3, \mathfrak B, \a_{i\ell}N_\ell\right]\bp^3v_i\,d\sigma,
\end{split}
\end{equation}
Here, $\mathfrak B:=q+1+\dfrac{\pa_1^2\eta_k\a_{k\ell}N_\ell}{\abs{\pa_1\eta}^3}$, and $[\cdot,\cdot,\cdot]$ is a commutator term. The term ST is related to the surface tension and provides improved regularity for the boundary:
\begin{equation*}
	\begin{split}
		\text{ST}=\hal \dfrac{d}{dt}\intb \dfrac{1}{\abs{\pa_1\eta}}\abs{\bp^3\pa_1\eta\cdot n}^2\,d\sigma+\cdots\end{split}
\end{equation*}
The first main difficulty in our analysis is to deal with the term ET, which is caused by the vector form of the force balance law.
At first glance, ET seems similar to $\intb \bp^3\pa_1\eta\bp^3v$, and hence there is a loss of one derivative in using dual estimate on the boundary. To bypass this difficulty, with $\pa_t\eta=v$ and $\a_{\cdot 2}=\pa_1\eta^\perp=(-\pa_1\eta_2,\pa_1\eta_1)^{\text{T}}$, we first adopt Wu's idea in \cite{Wu1} to use time derivative of this integral:
\begin{equation*}
	\begin{split}
		\text{ET}=&\intb\mathfrak B\bp^3\pa_1\eta^\perp\cdot\bp^3v\\=&\dfrac{d}{dt}\intb \mathfrak B\bp^3\pa_1\eta^\perp\cdot\bp^3\eta-\intb \pa_t\mathfrak B\bp^3\pa_1\eta^\perp\cdot\bp^3\eta-\intb \mathfrak B\pa_t\bp^3\pa_1 \eta^\perp\cdot\bp^3\eta\\=&\dfrac{d}{dt}\intb \mathfrak B\bp^3\pa_1\eta^\perp\cdot\bp^3\eta-\intb \pa_t\mathfrak B\bp^3\pa_1\eta^\perp\cdot\bp^3\eta+\underbrace{\intb \mathfrak B\bp^3v^\perp\cdot\bp^3\pa_1\eta}_{\mathfrak A}-\intb \pa_1\mathfrak B\bp^3 v^\perp\cdot\bp^3\eta
	\end{split}
\end{equation*}
Apparently the term $\mathfrak A$ still cannot be controlled. However, by the antisymmetric property for $\eta^\perp$ and $\eta$:
$$\pa^a\eta^\perp\cdot\pa^b\eta=-\pa^b\eta^\perp\cdot\pa^a\eta,$$
we see that $\mathfrak A=-\text{ET}$ and find the symmetric structure for ET:
\begin{equation*}
	\text{ET}=-\dfrac{1}{2}\dfrac{d}{dt}\int_{\Gamma}\mathfrak B\bp^3(\a_{i\ell}N_\ell)\bp^3\eta_i\,d\sigma+\cdots
\end{equation*}
Then by using the trace estimate $\abs{f}_{L^2(\Gamma)}^2\ls \norm{f}_{L^2(\Omega)}^2+\norm{f}_{L^2(\Omega)}\norm{\nabla f}_{L^2(\Omega)}$, ET term can be controlled by $\abs{\bp^3\pa_1\eta\cdot n}_{L^2(\Gamma)}^2$ and $\norm{\bp^3\nabla\eta}_{L^2(\Omega)}^2$.

In estimating of normal derivatives, one way to control the normal derivatives for the inviscid system is to combine the estimates for divergence, vorticity and Hodge's type estimates; e.g., see \cite{CS07, GuW_2016}. However, due to the force balance law, the boundary condition is unclear when one tries to obtain energy estimates for the vorticity; thus, this approach is not currently applicable. Therefore, the second main difficulty in our analysis is to derive the normal derivative estimates. Our idea is to make use of the elastic term $\Delta\eta$ to do the elliptic estimates and control the normal derivatives inductively. 
We write the equation as
\begin{equation*}
-\pa_2^2\eta=\pa_1^2\eta-\pa_t v+\nabla_\eta q
\end{equation*}
and find that the first two terms are tangential derivatives; thus the key to applying the induction method is to derive a delicate estimate of the pressure $q$, such as
$$\norm{\bp^2\nabla q}_{L^2(\Omega)}^2(t) \ls M_0+TP\left(\sup_t\mathfrak E(t)\right).$$ 
Note that such a pressure estimate is usually obtained after all derivative estimates of $(\eta, v)$ are available. Hence, with only tangential energy estimates, it is nontrivial and cannot be obtained by the usual approach (see \cite[Lemma 12.1]{CS07}). 

The reason we can successfully establish such pressure estimates is that we find  
the following important inherent structure on the boundary: 
\begin{equation}
\label{p2b}
	 F_{i2}=\dfrac{\a_{i2}}{\abs{\a_{\cdot 2}}^2} \rightarrow \pa_2\eta=\dfrac{\pa_1\eta^\perp}{\abs{\pa_1\eta}^2},
\end{equation}
which is obtained by taking an orthogonal projection of the force balance law \eqref{p1b}.
The above structure indicates that the normal derivatives of $\eta$ can be controlled by its tangential derivatives, and this observation can actually improve the elliptic estimates for $q$. In short, when the pressure is studied through a divergence-form elliptic equation with a Neumann boundary condition:
\begin{equation*}
N_\ell\a_{i\ell}\a_{ij}\pa_j q=-\pa_tv_i\a_{i\ell}N_\ell+\Delta\eta_i\a_{i\ell}N_\ell,
\end{equation*}
$\Delta\eta_i\a_{i\ell}N_\ell$ is the most difficult to control. The normal trace estimates can only show that 
$$\abs{\bp^2\Delta\eta_i\a_{i\ell}N_\ell}_{H^{-\hal}(\Gamma)}\ls \norm{\bp^2\Delta\eta}_{L^2(\Omega)}^2+\norm{\Div_\eta\bp^2\Delta\eta}_{L^2(\Omega)}^2,$$
and both two terms contain two normal derivatives; hence, they are out of control at the moment. However, noticing that $\a_{i\ell}N_\ell=\pm\a_{i2}$ on the top or bottom boundary, with the help of \eqref{p2b} and the equality $\Delta\eta_i\a_{i2}=-\pa_1\eta_i\pa_1\a_{i2}-\pa_2\eta_i\pa_2\a_{i2}=-\pa_1\eta_i\pa_1\a_{i2}-\a_{i1}\pa_1F_{i2}$, we see that the highest-order term contains only $\pa_1$ derivatives. Then, by combining this observation with the trace estimates
$$\abs{\pa_1f}_{H^{-\hal}(\Gamma)}\ls \abs{f}_{H^{\hal}(\Gamma)}\ls \norm{f}_{H^1(\Omega)},$$
we find that $\abs{\bp^2\Delta\eta_i\a_{i\ell}N_\ell}_{H^{-\hal}(\Gamma)}$ can be controlled by the tangential energy. Thus, the required estimate for $q$ can be obtained, and the induction method is applied to control the normal derivatives. 


After obtaining the a priori estimates of the elastodynamics, the next step is to construct the approximate solutions. At this point, an important observation is that the methodology of obtaining the estimates for normal derivatives can also be used for the viscoelasticity; then, it is possible to construct approximate viscosity solutions.  

For the viscoelasticity, the tangential energy estimates are obtained almost parellel to the elastodyanmics, with an extra dissipation term. This estimate is also independent of the viscosity parameter $\epsilon$. On the other hand, the normal derivatives are studied by using the rewritten equation:
\begin{equation*}
-\pa_2^2\eta-\epsilon\abs{\a_{\cdot2}}^2\pa_2^2v=\pa_1^2\eta-\pa_t v+\nabla_\eta q+\epsilon\left(\sum_{\alpha+\beta \neq 4}\pa_\alpha(\a_{k\alpha}\a_{k\beta}\pa_\beta v)+\pa_2\abs{\a_{\cdot2}}^2\pa_2v\right).
\end{equation*}
A similar structure as \eqref{p2b} can be found on the boundary by applying the projection technique:
$$F_{i2}+\epsilon\pa_{\eta_j}v_i\a_{j2}=\dfrac{1}{\abs{\a_{\cdot 2}}^2}\left(\a_{i2}-2\epsilon\pa_t\a_{j2}\a_{j2}\a_{i2}\right)+\epsilon\pa_t\a_{i2}.$$ Of course, with the viscosity, the formula becomes more complicated. More importantly, the viscosity tensor should be combined with the elastic stress instead of being dealt with separately. With this inherent structure, the required estimates for the pressure are also obtained and the normal derivatives are controlled by the induction method, which is independent of $\epsilon$. 

When the viscosity-independent a priori esimates for the viscoelasticity are available, another technical difficulty, the issue of compatibility, arises. Due to the presence of viscosity, especially on the free boundary, the compatibility conditions for the viscoelasticity are different from those for the elastodynamics. Hence, the initial data of the elastodynamics may not be valid for the viscoelasticity, and as a result, we cannot use the viscoelasticity as an approximate system directly. Our strategy is to adopt the idea in \cite{Cheng_Shkoller_10} to smooth the initial data of the elastodynamics in a suitable way and  modify the viscoelastic system by adding an external smoothing force $f^\epsilon$. More precisely, given that the initial data $(\eta_0, v_0)$ satisfy the compatibility condition for the elasticity, the fourth-order elliptic system and the Stokes system are used to produce the regularized $(\eta_0^\epsilon, v_0^\epsilon)$, which still satisfies the compatibility condition for the elasticity. Then,` we can add a smoothing modification $\epsilon\pa_j\Psi_{ij}$
to the viscoelasticity, which is designed to cancel $\epsilon J\Delta_\eta v_i$ at $t=0$. Thus, $(\eta_0^\epsilon, v_0^\epsilon)$ and the corresponding $\pa_tv^\epsilon(0)$ satisfy the compatibility condition for the modified viscoelastic system.  In this way, the problem of noncompatibility is solved. Note that the modification term is smooth and depends only on the initial data; then, our framework to derive the viscosity-independent a priori estimates works for both the original viscoelastic system and the modified one.  Hence, the vanishing viscosity limit of the modified viscoelastic system is applied to establish the local well-posedness of the elastodynamics. Moreover, when the initial data of the elastodynamics satisfy the compatibility conditions for the viscoelasticity, we can obtain a byproduct that justifies the inviscid limit of the incompressible viscoelasticity.

At the end of this subsection, it is worth noting that the presence of elastic stress plays an important role in our analysis, especially for the estimates of the normal derivatives. First, the term $\Delta \eta$ in the momentum equation allows us to estimate the normal derivatives in a consistent way for both viscid and inviscid systems. 
Second, the term $F_{i2}$ in the dynamical boundary condition \eqref{bdc} allows us to estimate the normal derivatives from the tangential derivatives on the boundary in some sense. This fact is crucial in justifying the inviscid limit in standard Sobolev spaces.

\subsection{A Review of related results}
Now, we briefly review some related results in the literature. When the elastic effect is neglected, the free boundary problem of the Euler equations has been studied intensively by many mathematicians in recent decades. The early works focused on the irrotational fluids, which began with the work of Nalimov \cite{N} on local well-posedness for small initial data. Then Wu \cite{Wu1,Wu2} (see also Lannes \cite{Lannes}) generalized the results to general initial data. For irrotational inviscid fluids, certain dispersive effects can be used to establish the global well-posedness for small initial data; we refer to  Wu \cite{Wu3,Wu4}, Germain, Masmoudi and Shatah \cite{GMS1, GMS2}, Ionescu and Pusateri \cite{IP,IP2} and Alazard and Delort \cite{AD}. For the general incompressible Euler equations, the first local well-posedness in 3D was established by Lindblad \cite{Lindblad05} for the case without surface tension (see Christodoulou and Lindblad \cite{CL_00} for the a priori estimates) and
by Coutand and Shkoller \cite{CS07} for the case with (and without) surface tension. We also refer to the results of Shatah and Zeng \cite{SZ} and Zhang and Zhang
\cite{ZZ}. 

For the study of vanishing viscosity limits, one expects that as the fluid viscosity $\epsilon$ goes to $0$, the limit of solutions to the viscoelastic system converges to a solution to the elastodynamic system. 
For cases in which there is no boundary, this problem has been well studied in \cite{Cai}. In that work, Cai, Lei, Lin and Masmoudi justified the vanishing viscosity limit globally in time. 
The reader is also referred to Kato \cite{kato72}, Swann \cite{swann} and Masmoudi \cite{mas_07} for the case of the Navier-Stokes equations. 
However, in the presence of boundaries, the problem becomes challenging due to the possible formation of boundary layers, we refer to the results of a series works by Lopes Filho, Mazzucato, Nussenzveig Lopes and Taylor \cite{Fil2008,Fil2008ii,TaylorI}. For the free-surface problem, only recently, the inviscid limit of the incompressible Navier-Stokes equations without surface tension was justified by Masmoudi and Rousset \cite{MasRou} in conormal Sobolev spaces.
Later, Wang and Xin \cite{Wang_15} considered the surface tension case, and Mei, Wang and Xin \cite{MWX} considered the compressible case.
In contrast to these results, our inviscid limit of the viscoelasticity is justified in standard Sobolev spaces. The cause of such difference is explained briefly after Theorem \ref{vanish}. From this perspective, our analysis suggests that the existence of the elastic stress plays a crucial role in preventing the formation of the boundary layer. 
\subsection{Outline of the paper}
The remainder of this paper is organized as follows. In Section 2, we introduce the notation used throughout the paper. In Section 3, the compatibility conditions for the initial data are explained in detail. In Section 4, we recall some preliminary analysis lemmas that will be used frequently. In Section 5, a modified viscoelastic system is introduced, and viscosity-independent a priori estimates are derived in Section 6. In Section 7, local well-posedness is proved, and the inviscid limit is justified. Lastly, in the Appendix, we illustrate the construction of the smoothed initial data, which are used to address the compatibility issue between viscid and inviscid systems.

%

\section{Notation}
Einstein's summation convention is used throughout the paper. We use $\bp:=\pa_1,\pa_t$ to denote the tangential derivatives.

We work with the usual $L^p$ and Sobolev spaces $W^{m,p}$ and $H^m=W^{m,2}$ on both the domain $\Omega$ and the boundary $\Gamma$. For notational simplicity, we denote the norms of these spaces defined on $\Omega$ by $\norm{\cdot}_{L^p}, \norm{\cdot}_{W^{m,p}}$ and $\norm{\cdot}_{m}$ and the norms of these spaces defined on $\Gamma$ by $\abs{\cdot}_{L^p}, \abs{\cdot}_{W^{m,p}}$ and $\abs{\cdot}_{m}$.  For real $s\geq 0$, the Hilbert space $H^s(\Gamma)$ and the boundary norm $\abs{\cdot}_s$ (or $\abs{\cdot}_{H^s(\Gamma)}$) are defined by interpolation. The negative-order Sobolev space $H^{-s}(\Gamma)$ is defined via duality: for real $s\geq 0, H^{-s}(\Gamma):=[H^{s}(\Gamma)]^\prime.$

We also introduce the spatial-time Sobolev norms on $\Omega$ as:
$$\norm{u}_{\X^m}^2:=\sum_{\ell\leq m}\norm{\partial_t^{\ell}u}_{m-\ell}^2$$
and the spatial-time Sobolev norms on $\Gamma$ as:
$$\abs{u}_{\X^m}^2:=\sum_{\ell\leq m}\abs{\partial_t^{\ell}u}_{m-\ell}^2.$$
We use $\norm{\cdot}_{L^p_t(\textbf{X})}$ as the norm of the space $L^p([0, t];\textbf{X})$.

We use $C$ to denote generic constants, which depend only on the domain $\Omega$ and the boundary $\Gamma$, and  use $f\ls g$ to denote $f\leq Cg$.  We use $P$ to denote a generic polynomial function of its arguments, and the polynomial coefficients are generic constants $C$.

\section{The compatibility condition for the initial data}\label{comptatible}
In this section, some compatibility conditions for the initial data $(\eta_0, v_0)$ are derived since we are looking for solutions that are continuously differentiable in time. First, $\pa_tv(0)$ can be computed by using \eqref{eq:elastic}$_2$ at $t=0$:
\begin{equation}
	\label{vt0}
	\pa_tv(0)=-\nabla_{\eta_0} q_0+\Delta\eta_0.
\end{equation}
The initial pressure function $q_0$ satisfies the following equation:
\begin{equation*}
\begin{cases}
	-\Delta_{\eta_0} q_0=\partial_{\eta_{0i}} v_{0j}\partial_{\eta_{0j}}{v_0}_i-\pa_{\eta_{0i}}(\Delta\eta_{0i})&\text{in }\Omega,\\
	-\left(q_0+1+\dfrac{\pa_1^2\eta_{0\alpha}\a_{\alpha \ell}N_\ell(0)}{\abs{\pa_1\eta_0}^3}\right)\a_{i2}(0)+\pa_2\eta_{0i}=0&\text{on }\Gamma.
\end{cases}
\end{equation*}
The above equation is not solvable in general unless $\eta_0$ satisfies the zeroth-order compatibility condition
\begin{equation*}
	\Pi_0 \pa_2\eta_{0i}=0\,\,\text{on }\Gamma,
\end{equation*}
where $\Pi_0f_i:=f_i-\dfrac{f_k\a_{k2}(0)\a_{i2}(0)}{\abs{\a_{\cdot 2}(0)}^2}$ is the orthogonal projection onto the tangent plane of $\eta_0(\Gamma)$ at $t=0$. With $\a_{\cdot 2}=(-\pa_1\eta_2, \pa_1\eta_1)$, this compatibility condition results in 
\begin{equation}\label{zcomp}
	(\pa_2\eta_{01},\pa_2\eta_{02})=\dfrac{(-\pa_1\eta_{02},\pa_1\eta_{01})}{\abs{\pa_1\eta_0}^2}\,\,\text{on }\Gamma,
\end{equation}
and $q_0$ is the solution of the following elliptic equation:
\begin{equation}
	\label{q0solv}
\begin{cases}
	-\Delta_{\eta_0} q_0=\partial_{\eta_{0i}} v_{0j}\partial_{\eta_{0j}}{v_0}_i-\pa_{\eta_{0i}}(\Delta\eta_{0i})&\text{in }\Omega,\\
	q_0=\dfrac{1}{\abs{\pa_1\eta_0}^2}-1-\dfrac{\pa_1^2\eta_{0\alpha}\a_{\alpha \ell} N_\ell(0)}{\abs{\pa_1\eta_0}^3}&\text{on }\Gamma.
\end{cases}
\end{equation}

Next, after time-differentiating \eqref{eq:elastic}$_2$ and letting $t=0$, we have:
\begin{equation}
	\label{vt20}
\partial_t^2 v(0)=\partial_t\bigg(-\nabla_{\eta} q+\Delta\eta\bigg)\bigg|_{t=0}=
-\nabla v_0\nabla q_0-\nabla_{\eta_0} q_1+\Delta v_0,
\end{equation}
where $q_1:=\pa_tq(0)$ is the solution to\begin{equation*}
	\begin{cases}
            -\Delta_{\eta_0} q_1=\left(\nabla^2\eta\nabla^2v+\nabla v\nabla \pa_tv+\left[\dt, \Delta_\eta\right]q\right)(0)&\text{in }\Omega,\\
		q_1=\dfrac{1}{\abs{\pa_1\eta_0}^5}P(\pa_1\eta_0,\pa_1 v_0, \pa_1^2v_0)&\text{on }\Gamma.
	\end{cases}
\end{equation*}
Similarly, the following 1st-order compatibility condition is needed:
\begin{equation*}
	\Pi_0 \left(-\left(q_0+1+\dfrac{\pa_1^2\eta_{0\alpha}\a_{\alpha\ell}N_\ell(0)}{\abs{\pa_1\eta_0}^3}\right)\pa_t\a_{i2}(0)+\pa_2v_{0i} \right)=0\quad\text{on }\Gamma,
\end{equation*}
Inserting \eqref{q0solv}$_2$ into the above equality, we arrive at
\begin{equation}
	\label{1comp}
	\Pi_0\left(\pa_2 v_{0i}-\dfrac{\pa_t\a_{i2}(0)}{\abs{\pa_1\eta_0}^2}\right)=0.
\end{equation}

For $\pa_t^3v(0)$, we have
\begin{equation*}
\partial_t^3 v(0)=\partial_t^2\bigg(-\nabla_{\eta} q+\Delta\eta\bigg)\bigg|_{t=0}=
-\nabla \pa_tv(0)\nabla q_0-\nabla\eta_0\nabla q_2-2\nabla v_0\nabla q_1+\Delta \pa_tv(0),
\end{equation*}
where $q_2=\pa_t^2q(0)$ satisfies 
\begin{equation*}
		\begin{cases}
                -\Delta_{\eta_0} q_2=\left(\nabla^2\eta\nabla\pa_tv+2\nabla^2 v\nabla^2v+\nabla v\nabla \pa_t^2v+2\nabla\pa_t v\nabla\pa_t v+\left[\dt^2,\Delta_\eta\right]q\right)(0)&\text{in }\Omega,\\
			q_2=\dfrac{1}{\abs{\pa_1\eta_0}^7}P(\pa_1\eta_0,\pa_1v_0, \pa_1^2v_0,\pa_1^2\pa_tv(0),\pa_1\pa_tv(0))&\text{on }\Gamma.
	\end{cases}
\end{equation*}
and the following 2nd-order compatibility condition is required:
\begin{equation}
	\label{2comp}
	\Pi_0 \left(-\dfrac{1}{\abs{\pa_1\eta_0}^2}\pa_t^2\a_{i2}(0)+2\dfrac{\pa_1\eta_0\cdot\pa_1v_0}{\abs{\pa_1\eta_0}^4}\pa_t\a_{i2}(0)+\pa_2\pa_tv(0)_i \right)=0\quad\text{on }\Gamma.
\end{equation}



\section{Preliminaries}

\subsection{Commutator estimates}

Some commutator estimates are recalled below.
\begin{lemma}
The following hold:

$(i)$ For $1\leq \abs{\alpha}\leq 3$, we define the commutator
\begin{equation*}
\left[\pa^{\alpha}, g\right]h = \pa^{\alpha}(gh)-g\pa^{\alpha} h.
\end{equation*}
Then we have
\begin{equation}
\label{co1}
\norm{\left[\pa^{\alpha}, g\right]h}_0\ls\left(\norm{\pa^3 g}_{0}+\norm{\nabla g}_{\X^2}^{\hal}+\norm{g}_{\X^2}^2+\norm{g}_{\X^2}\right)\norm{h}_{\X^2}.
\end{equation}
$(ii)$ For $2\leq~\abs{\alpha}\leq 3$, we define the symmetric commutator
\begin{equation}
	\label{sc}
\left[D^{\alpha}, g, h\right] = D^{\alpha}(gh)-D^{\alpha}g h-gD^{\alpha} h.
\end{equation}
Then we have
\begin{equation}
\label{co2}
\norm{\left[\pa^{\alpha}, g, h\right]}_0\ls\norm{\pa g}_{\X^2}\left(\norm{\pa^2 h}_{0}+\norm{\pa h}_{0}^\hal\norm{\pa \nabla h}_0^\hal\right) .
\end{equation}
\end{lemma}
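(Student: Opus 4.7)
The plan is to reduce each commutator to a sum of products $\pa^\beta g \cdot \pa^{\alpha-\beta} h$ via the Leibniz rule, and then bound each product in $L^2(\Omega)$ by H\"older's inequality combined with two-dimensional Sobolev and interpolation estimates. Since $\Omega = \mathbb T \times (0,1) \subset \mathbb R^2$, the essential tools are the Sobolev embedding $H^s \hookrightarrow L^\infty$ for $s>1$ (so $\|f\|_{L^\infty}\lesssim\|f\|_{H^2}$) together with the Ladyzhenskaya / Gagliardo--Nirenberg interpolation $\|f\|_{L^4}^2 \lesssim \|f\|_0 \|\nabla f\|_0$; these permit a balanced redistribution of derivatives between the two factors. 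The space-time norm $\|\cdot\|_{\X^2}$ controls $\|\cdot\|_2$, hence the $L^\infty$ norm, which is what makes the low-order terms acceptable.

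For (i), expand
\begin{equation*}
[\pa^{\alpha}, g]h = \sum_{1\leq |\beta|\leq |\alpha|} c_\beta\, \pa^\beta g \cdot \pa^{\alpha-\beta} h,
\end{equation*}
and split by $|\beta|$. The top term occurs only for $|\alpha|=3$, $|\beta|=3$ and gives $\pa^3 g \cdot h$, estimated as $\|\pa^3 g\|_0\|h\|_{L^\infty}\lesssim \|\pa^3 g\|_0\|h\|_{\X^2}$, which accounts for the $\|\pa^3 g\|_0$ contribution on the RHS. The low-order terms $|\beta|=1$ give $\pa g\cdot \pa^{\alpha-1}h$ and are controlled by $\|\pa g\|_{L^\infty}\|\pa^{\alpha-1}h\|_0 \lesssim \|g\|_{\X^2}\|h\|_{\X^2}$. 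The intermediate term $|\beta|=2$, $|\alpha|=3$ is the most delicate and requires putting both factors in $L^4$:
\begin{equation*}
\|\pa^2 g\cdot\pa h\|_0 \leq \|\pa^2 g\|_{L^4}\|\pa h\|_{L^4} \lesssim \|\pa^2 g\|_0^{1/2}\|\pa^3 g\|_0^{1/2}\|\pa h\|_0^{1/2}\|\pa\nabla h\|_0^{1/2}.
\end{equation*}
The half-power $\|\pa^3 g\|_0^{1/2}$ is exactly what produces the $\|\nabla g\|_{\X^2}^{1/2}$ factor on the right-hand side of (i), while the $h$-factors recombine into $\|h\|_{\X^2}$. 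Collecting these contributions yields (i); the purely multiplicative $\|g\|_{\X^2}$-type contributions absorb the remaining lower-order factors.

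For (ii), the symmetric commutator kills both endpoint contributions, so only cross terms with $1 \leq |\beta|\leq |\alpha|-1$ survive; in particular $g$ carries at most $|\alpha|-1 \leq 2$ derivatives, which is well inside the regularity measured by $\|\pa g\|_{\X^2}$. For $|\alpha|=3$ the two types of surviving terms are $\pa g\cdot \pa^2 h$ and $\pa^2 g\cdot \pa h$. The first is bounded directly by $\|\pa g\|_{L^\infty}\|\pa^2 h\|_0 \lesssim \|\pa g\|_{\X^2}\|\pa^2 h\|_0$, reproducing the first factor of the stated RHS. The second is treated by Ladyzhenskaya on both factors,
\begin{equation*}
\|\pa^2 g\cdot\pa h\|_0 \leq \|\pa^2 g\|_{L^4}\|\pa h\|_{L^4} \lesssim \|\pa g\|_{\X^2}\,\|\pa h\|_0^{1/2}\|\pa\nabla h\|_0^{1/2},
\end{equation*}
producing precisely the second factor. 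The case $|\alpha|=2$ has only $\pa g\cdot \pa h$, which is subsumed in the same estimates.

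The overall proof is essentially routine once the Leibniz expansion is set up; the only mildly delicate point is the interpolation bookkeeping needed to land exactly on the anisotropic bounds stated, especially the half-power $\|\nabla g\|_{\X^2}^{1/2}$ in (i), which forces the $L^4\times L^4$ split on the middle term via the 2D Ladyzhenskaya inequality. I do not expect any conceptual obstacle.
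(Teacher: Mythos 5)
Your part (ii) is correct and is essentially the paper's own argument (split into $\pa g\,\pa^2 h$ and $\pa^2 g\,\pa h$, bound the first with $H^2\hookrightarrow L^\infty$ applied to $\pa g$, the second by Ladyzhenskaya on both factors). Part (i), however, has a genuine gap in the two sub-top terms, and it matters because the lemma's right-hand side has the very specific shape $\norm{\pa^3 g}_0+\norm{\nabla g}_{\X^2}^{1/2}+\norm{g}_{\X^2}^2+\norm{g}_{\X^2}$. First, for the $\abs{\beta}=1$ term you claim $\norm{\pa g}_{L^\infty}\norm{\pa^2 h}_0\ls\norm{g}_{\X^2}\norm{h}_{\X^2}$; but in two dimensions $H^2\not\hookrightarrow W^{1,\infty}$, so $\norm{\pa g}_{L^\infty}$ is \emph{not} controlled by $\norm{g}_{\X^2}$. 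The paper instead uses the borderline embedding $\norm{\pa g}_{L^\infty}\ls\norm{\pa g}_{W^{1,3}}$ together with $\norm{f}_{L^3}\ls\norm{f}_0^{2/3}\norm{\nabla f}_0^{1/3}$, which is precisely where an extra fractional power of $\norm{\nabla g}_{\X^2}$ enters and then gets absorbed by Young's inequality.

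Second, and more seriously, your $L^4\times L^4$ treatment of the middle term gives
\begin{equation*}
\norm{\pa^2 g\,\pa h}_0\ls\norm{\pa^2 g}_0^{1/2}\norm{\nabla\pa^2 g}_0^{1/2}\,\norm{\pa h}_0^{1/2}\norm{\pa\nabla h}_0^{1/2}\ls\norm{g}_{\X^2}^{1/2}\norm{\nabla g}_{\X^2}^{1/2}\norm{h}_{\X^2},
\end{equation*}
and the factor $\norm{g}_{\X^2}^{1/2}\norm{\nabla g}_{\X^2}^{1/2}$ is \emph{not} dominated by $\norm{\pa^3 g}_0+\norm{\nabla g}_{\X^2}^{1/2}+\norm{g}_{\X^2}^2+\norm{g}_{\X^2}$: setting $a=\norm{g}_{\X^2}$, $b=\norm{\nabla g}_{\X^2}$ and taking, say, $a\sim b^{1/4}\to\infty$ (achievable, e.g., with $g$ oscillating in $x_2$, which also keeps $\norm{\pa^3 g}_0$ small), the left side is $\sim b^{5/8}$ while the right side is $\sim b^{1/2}$. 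So your "half-power $\norm{\nabla g}_{\X^2}^{1/2}$ comes exactly from the $L^4\times L^4$ split" claim does not hold; the symmetric $1/2$--$1/2$ interpolation lands on the wrong exponents. The paper's route is the asymmetric H\"older split $\norm{\pa^2 g}_{L^3}\norm{\pa h}_{L^6}$, with $\norm{\pa^2 g}_{L^3}\ls\norm{\pa^2 g}_0^{2/3}\norm{\nabla\pa^2 g}_0^{1/3}$ and $\norm{\pa h}_{L^6}\ls\norm{\pa h}_1\ls\norm{h}_{\X^2}$; then Young's inequality with exponents $3$ and $3/2$ gives $a^{2/3}b^{1/3}\le\tfrac13 a^2+\tfrac23 b^{1/2}$, which is exactly how the terms $\norm{g}_{\X^2}^2$ and $\norm{\nabla g}_{\X^2}^{1/2}$ in \eqref{co1} arise. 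To repair your proof, replace the $L^4\times L^4$ step (and the false $L^\infty$ bound on $\pa g$) by these $L^3$--$L^6$ and $W^{1,3}\hookrightarrow L^\infty$ arguments.
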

\begin{proof}
	We prove \eqref{co1} and \eqref{co2} with $\abs{\alpha}=3$.

	First, we have that
	\begin{equation*}
		\norm{\left[\pa^3,g\right]h}_0\ls \norm{\pa^3g}_0\norm{h}_{L^\infty}+\norm{\pa^2 g}_{L^3}\norm{\pa h}_{L^6}+\norm{\pa g}_{L^\infty}\norm{\pa^2 h}_0.
	\end{equation*}
	By the Gagliardo--Nirenberg interpolation inequality 
	\begin{equation*}
		\norm{f}_{L^3}\ls \norm{f}_0^{\frac{2}{3}}\norm{\nabla f}_0^{\frac{1}{3}}
	\end{equation*}
	and Sobolev's embedding inequality in 2D
	$$\norm{f}_{L^\infty}\ls \norm{f}_2,\,\,\norm{f}_{L^\infty}\ls \norm{f}_{W^{1,3}},\,\, \norm{f}_{L^r}\ls\norm{f}_1,$$
	we arrive at
	\begin{equation*}
		\norm{\left[\pa^3,g\right]h}_0\ls \norm{\pa^3g}_0\norm{h}_{\X^2}+\norm{\pa^2\nabla g}_0^{\frac{1}{3}}\norm{\pa^2g}_0^{\frac{2}{3}}\norm{h}_{\X^2}+\left(\norm{\pa g}_1+\norm{\pa\nabla^2 g}_0^{\frac{1}{3}}\norm{\pa\nabla g}_0^{\frac{2}{3}}\right)\norm{h}_{\X^2}.
	\end{equation*}
	Then, by Cauchy's inequality, the estimate \eqref{co1} is proved.

	For the proof of \eqref{co2}, we have 
	\begin{equation*}
		\norm{\left[\pa^3, g, h \right]}_0\ls \norm{\pa g\pa^2 h}_0+\norm{\pa^2g \pa h}_0\ls \norm{\pa g}_{\X^2}\norm{\pa^2 h}_0+\norm{\pa^2 g}_{L^4}\norm{\pa h}_{L^4}.
		\end{equation*}
		Then, by Ladyzhenskaya's inequality, \eqref{co2} is proved.
\end{proof}
We will also use the lemma below.
\begin{lemma}
It holds that
\begin{equation}
\label{co123}
\abs{gh}_{\frac{1}{2}} \ls \abs{g}_{1}\abs{h}_{\hal}, \abs{gh}_{-\hal}\ls \abs{g}_{1}\abs{h}_{-\hal}.
\end{equation}
 \end{lemma}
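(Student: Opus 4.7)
The plan is to establish the first inequality and derive the second from it by duality. Since the boundary $\Gamma$ is the disjoint union $\mathbb{T}\times\{0\}\cup\mathbb{T}\times\{1\}$ of two copies of the one-dimensional torus, the boundary norms $\abs{\cdot}_s$ are nothing but the standard $H^s(\mathbb{T})$ norms; one can therefore work on a single component at a time and use the crucial 1D embedding $H^1(\mathbb{T})\hookrightarrow L^\infty(\mathbb{T})$.

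For the first inequality, the idea is to interpolate the multiplication operator $M_g\colon h\mapsto gh$ between the endpoints $s=0$ and $s=1$ and invoke $[H^0,H^1]_{1/2}=H^{1/2}$. The endpoint $s=0$ is immediate from H\"older's inequality and the embedding: $\abs{gh}_0\leq \abs{g}_{L^\infty}\abs{h}_0\ls \abs{g}_1\abs{h}_0$. For $s=1$, a Leibniz expansion yields
\begin{equation*}
\abs{gh}_1\ls \abs{g}_{L^\infty}\abs{h}_1+\abs{\pa_1 g}_0\abs{h}_{L^\infty}\ls \abs{g}_1\abs{h}_1,
\end{equation*}
where the embedding is used on both factors. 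Standard interpolation then gives $\abs{gh}_{1/2}\ls \abs{g}_1\abs{h}_{1/2}$, which is the first claim.

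For the second inequality, duality applies: since $H^{-1/2}(\Gamma)=(H^{1/2}(\Gamma))^\prime$,
\begin{equation*}
\abs{gh}_{-1/2}=\sup_{\abs{\phi}_{1/2}\leq 1}\abs{\langle h,g\phi\rangle}\leq \abs{h}_{-1/2}\sup_{\abs{\phi}_{1/2}\leq 1}\abs{g\phi}_{1/2}\ls \abs{g}_1\abs{h}_{-1/2},
\end{equation*}
where the last step invokes the first inequality applied to $g\phi$. The main obstacle, if any, is ensuring that both endpoint estimates are tame (linear in $\abs{g}_1$ in the multiplication norm), which hinges entirely on the one-dimensionality of $\Gamma$: in higher dimensions $H^1\hookrightarrow L^\infty$ fails, and the $s=1$ endpoint would not close without additional regularity on $h$. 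In the present setting the strategy is straightforward, and interpolation does the rest.
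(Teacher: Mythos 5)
Your proposal is correct and follows essentially the same route as the paper: establish the multiplier bound $\abs{gh}_{s}\ls\abs{g}_1\abs{h}_s$ at the endpoints $s=0,1$ using the one-dimensional embedding $\abs{f}_{L^\infty}\ls\abs{f}_1$, interpolate to $s=\hal$, and obtain the negative-order estimate by duality. No gaps.
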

\begin{proof}
	It is straightforward to check that $\abs{gh}_{s} \ls \abs{g}_{1}\abs{h}_{s}$ for $s=0,1$ with the help of the Sobolev embedding $\abs{f}_{L^\infty}\ls \abs{f}_1$. Then, the estimate \eqref{co123} follows by the interpolation. The second inequality follows by the dual estimate.
\end{proof}

%
%

\subsection{Trace estimates}
First, we have the trace estimates below.
\begin{lemma}It holds that
\begin{equation}\label{tre}
\abs{\omega}_0^2\ls \norm{\omega}_0^2+\norm{\omega}_0\norm{\nabla \omega}_0
\end{equation}
\end{lemma}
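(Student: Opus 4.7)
The plan is to prove this standard trace inequality on the slab $\Omega = \mathbb{T}\times(0,1)$ by reducing to a one-dimensional fundamental theorem of calculus argument in the normal direction, then integrating in the tangential variable. Since $\Gamma = \mathbb{T}\times\{0,1\}$, it suffices to bound the boundary $L^2$-norm on $\mathbb{T}\times\{1\}$; the bottom boundary is treated identically.

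First, for each fixed $x_1\in\mathbb{T}$ and each $x_2\in(0,1)$, I would write
\begin{equation*}
\omega^2(x_1,1) = \omega^2(x_1,x_2) + 2\int_{x_2}^{1} \omega(x_1,s)\,\partial_2\omega(x_1,s)\,ds.
\end{equation*}
The right-hand side may a priori only be valid for smooth $\omega$, so the argument is carried out for $\omega\in C^\infty(\overline{\Omega})$ and then extended to $H^1(\Omega)$ by density. Next I would integrate this identity with respect to $x_2\in(0,1)$; since the left-hand side is independent of $x_2$, this yields
\begin{equation*}
\omega^2(x_1,1) = \int_0^1 \omega^2(x_1,x_2)\,dx_2 + 2\int_0^1 \int_{x_2}^{1} \omega\,\partial_2\omega\,(x_1,s)\,ds\,dx_2.
\end{equation*}

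The key step is then to bound the double integral by $2\int_0^1 |\omega\,\partial_2\omega|(x_1,s)\,ds$ (after exchanging the order of integration or simply estimating the inner $s$-integral by its absolute value and using that the outer $x_2$-integration contributes at most a factor of $1$). Integrating this inequality over $x_1\in\mathbb{T}$ and applying Cauchy--Schwarz in $(x_1,s)$ gives
\begin{equation*}
\int_{\mathbb{T}} \omega^2(x_1,1)\,dx_1 \;\leq\; \|\omega\|_0^2 + 2\|\omega\|_0\,\|\partial_2\omega\|_0 \;\lesssim\; \|\omega\|_0^2 + \|\omega\|_0\,\|\nabla\omega\|_0,
\end{equation*}
which is the desired estimate on the top component of $\Gamma$. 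Repeating with the roles of the endpoints reversed (or integrating from $0$ to $x_2$) handles the bottom component.

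I do not expect any serious obstacle here: the inequality is the standard $H^{1/2}$-trace bound specialised to the slab geometry, and the only genuinely analytic input is the density of $C^\infty(\overline\Omega)$ in $H^1(\Omega)$ that legitimises the fundamental theorem of calculus manipulation. The only small care needed is to write the vertical integration so that the product $\omega\,\partial_2\omega$ appears (rather than, say, $|\omega|\,|\partial_2\omega|$ with no sign information) so that Cauchy--Schwarz produces exactly the $\|\omega\|_0\|\nabla\omega\|_0$ factor, which is sharper than the crude $\tfrac12(\|\omega\|_0^2+\|\nabla\omega\|_0^2)$ and is the form used elsewhere in the paper for interpolation-type estimates.
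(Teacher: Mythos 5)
Your proposal is correct and follows essentially the same route as the paper: a fundamental theorem of calculus identity in the vertical variable relating the boundary value of $\omega^2$ to an interior slice plus an integral of $\omega\,\partial_2\omega$, integration over $x_2\in(0,1)$ (using that the slab has unit thickness), and then Cauchy--Schwarz to produce the factor $\norm{\omega}_0\norm{\nabla\omega}_0$, with the top and bottom components of $\Gamma$ treated symmetrically. The density remark for passing from smooth functions to $H^1(\Omega)$ is a harmless addition; no gap.
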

\begin{proof}
By the fundamental theorem of calculus, we have
	\begin{equation*}
		\int_{\mathbb T}\abs{\omega(x_1,0)}^2\,dx_1=\int_{\mathbb T}\abs{\omega(x_1,x_2)}^2\,dx_1-\int_0^{x_2}\pa_2\left(\int_{\mathbb T}\abs{\omega(x_1,y)}^2\,dx_1\right)\,dy
\end{equation*}
Then integrating from $0$ to $1$ with respect to $x_2$, we arrive at
\begin{equation*}
	\begin{split}
		\int_{\mathbb T}\abs{\omega(x_1,0)}^2\,dx_1\cdot 1\ls &\norm{\omega}_0^2+\int_0^1\abs{\int_0^{x_2}\int_{\mathbb T}\pa_2\omega\cdot\omega \,dx_1\,dy}\,dx_2\\\ls&\norm{\omega}_0^2+\int_0^1\int_{\mathbb T}\abs{\pa_2\omega\cdot\omega}\,dx_1\,dx_2\\\ls&\norm{\omega}_0^2+\norm{\omega}_0\norm{\nabla \omega}_0
\end{split}
\end{equation*}
by using Cauchy's inequality. Similarly, we have 
\begin{equation*}
	\int_{\mathbb T}\abs{\omega(x_1,1)}^2\,dx_1\ls \norm{\omega}_0^2+\norm{\omega}_0\norm{\nabla \omega}_0
\end{equation*}
and thus we prove the lemma.
\end{proof}
Next, we have the normal trace estimates below.
\begin{lemma}\label{normal trace}
It holds that
\begin{equation}\label{gga}
	\abs{\omega_i\a_{i\ell}N_\ell}_{-\hal}\ls \norm{\nabla\eta}_{L^\infty}\left(\norm{\omega}_0+\norm{J\Div_\eta\omega}_0\right).
\end{equation}

\end{lemma}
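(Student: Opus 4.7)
The plan is a standard duality argument that exploits the Piola identity for the matrix $\a=JF^{-\mathrm T}$, namely $\pa_\ell \a_{i\ell}=0$, which is the reason why the Piola normal $\a N$ is the correct test object on the boundary. The strategy is to test $\omega_i\a_{i\ell}N_\ell$ against an arbitrary $\phi\in H^{1/2}(\Gamma)$, lift $\phi$ to the interior, integrate by parts, and use that $\a_{i\ell}\pa_\ell\omega_i=J\Div_\eta\omega$ after the Piola cancellation.

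More concretely, let $\phi\in H^{1/2}(\Gamma)$. By the surjectivity of the trace operator, choose an extension $\Phi\in H^{1}(\Omega)$ with $\Phi|_\Gamma=\phi$ and $\norm{\Phi}_{1}\ls \abs{\phi}_{1/2}$. The divergence theorem and the Piola identity give
\begin{equation*}
\int_{\Gamma}\omega_i\a_{i\ell}N_\ell\,\phi\,d\sigma
=\int_{\Omega}\pa_\ell\bigl(\omega_i\a_{i\ell}\Phi\bigr)\,dx
=\int_{\Omega}\bigl(\a_{i\ell}\pa_\ell\omega_i\bigr)\Phi\,dx+\int_{\Omega}\omega_i\a_{i\ell}\pa_\ell\Phi\,dx,
\end{equation*}
and since $\a_{i\ell}\pa_\ell\omega_i=J\Div_\eta\omega$, Cauchy--Schwarz yields
\begin{equation*}
\Bigl|\int_{\Gamma}\omega_i\a_{i\ell}N_\ell\,\phi\,d\sigma\Bigr|
\ls \norm{J\Div_\eta\omega}_{0}\norm{\Phi}_{0}+\norm{\a}_{L^\infty}\norm{\omega}_{0}\norm{\nabla\Phi}_{0}
\ls \norm{\a}_{L^\infty}\bigl(\norm{\omega}_{0}+\norm{J\Div_\eta\omega}_{0}\bigr)\abs{\phi}_{1/2}.
\end{equation*}
Taking the supremum over $\phi$ with $\abs{\phi}_{1/2}\leq 1$ and using the definition of $H^{-1/2}(\Gamma)$ as the dual of $H^{1/2}(\Gamma)$ gives the claimed bound, provided $\norm{\a}_{L^\infty}\ls\norm{\nabla\eta}_{L^\infty}$.

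This last point is the only place where the two-dimensional structure enters: because $J=1$ and $\a=JF^{-\mathrm T}$ with $F=\nabla\eta$, in two dimensions the entries of $\a$ coincide, up to signs, with the entries of $\nabla\eta$ themselves (they are the $2\times 2$ cofactors of $\nabla\eta$). In particular $\a_{\cdot 2}=(-\pa_1\eta_2,\pa_1\eta_1)^{\mathrm T}$ and $\a_{\cdot 1}=(\pa_2\eta_2,-\pa_2\eta_1)^{\mathrm T}$, so $\norm{\a}_{L^\infty}\ls\norm{\nabla\eta}_{L^\infty}$, completing the estimate.

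I do not expect a serious obstacle; the argument is essentially textbook normal-trace estimation once the Piola identity is invoked. The only minor care needed is to confirm $\pa_\ell\a_{i\ell}=0$ holds pointwise in $\Omega$ (which follows from the elementary differential-geometric identity for cofactor matrices together with $J=\det\nabla\eta$ and is independent of the incompressibility $J=1$), and to verify that the lifting $\Phi$ can be chosen with the stated $H^1$ bound, which is the standard trace theorem on $\Omega=\mathbb T\times(0,1)$.
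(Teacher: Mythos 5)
Your argument is correct and is exactly the standard duality argument (lift $\phi$ by the trace theorem, integrate by parts, use the Piola identity $\pa_\ell\a_{i\ell}=0$ and $\a_{i\ell}\pa_\ell\omega_i=J\Div_\eta\omega$, then bound $\a$ by $\nabla\eta$ via the $2\times2$ cofactor structure); the paper does not prove the lemma itself but cites \cite[Appendix C]{Wang_15}, where essentially this same computation is carried out, so your route coincides with the intended one. The only cosmetic point is that writing the term $\norm{J\Div_\eta\omega}_0\norm{\Phi}_0$ with the prefactor $\norm{\nabla\eta}_{L^\infty}$ uses that $\norm{\nabla\eta}_{L^\infty}$ is bounded below (which holds here since $\det\nabla\eta=J\geq\hal$), or else one simply keeps that term without the prefactor, which is a stronger statement.
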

\begin{proof}
	We refer the reader to \cite[Appendix C]{Wang_15}.
\end{proof}

Our energy estimates require the use of the following:
\begin{lemma}\label{pereaf}
Let $H^{\frac{1}{2}}(\Gamma)^\prime$ denote the dual space of $H^{\frac{1}{2}}(\Gamma)$. There exists a positive constant $C$ such that
\begin{equation}
\label{peadv}
\abs{\bar\partial F}_{-\frac{1}{2}}:=\abs{\bar\partial F}_{H^{\frac{1}{2}}(\Gamma)^\prime}\leq C\abs{F}_{\frac{1}{2}}, \forall F\in H^{\frac{1}{2}}(\Gamma).
\end{equation}

\end{lemma}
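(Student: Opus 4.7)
The plan is to prove this estimate by duality, exploiting that $\bar\partial=(\partial_1,\partial_t)$ is a first-order tangential differential operator on $\Gamma=\mathbb{T}\times\{0,1\}$, and hence on the Fourier side it has symbol $(ik,i\tau)$. Standard Fourier analysis on a torus / time-strip tells us such an operator lowers Sobolev regularity by exactly one order, so in particular it maps $H^{1/2}$ continuously into $H^{-1/2}$. Running this estimate backwards through the duality pairing will give the claim.

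The concrete steps I would carry out are the following. First, unfold the definition of the dual norm:
\[
|\bar\partial F|_{-1/2} \;=\; \sup_{\phi\in H^{1/2}(\Gamma),\;|\phi|_{1/2}\le 1}\; \langle \bar\partial F,\phi\rangle_{H^{-1/2},H^{1/2}},
\]
where the pairing agrees with $\int_{\Gamma}\bar\partial F\cdot\phi\,d\sigma$ on smooth functions. Next, integrate by parts to transfer the tangential derivative onto the test function,
\[
\int_{\Gamma}\bar\partial F\cdot\phi\,d\sigma \;=\; -\int_{\Gamma} F\cdot\bar\partial \phi\,d\sigma,
\]
which is valid without boundary terms since $\mathbb{T}$ is closed (and $\partial_t$ is tangential along the space-time strip carrying $\Gamma$). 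Then apply the $H^{1/2}$--$H^{-1/2}$ duality
\[
\Bigl|\int_{\Gamma} F\cdot\bar\partial \phi\,d\sigma\Bigr| \;\le\; |F|_{1/2}\,|\bar\partial\phi|_{-1/2},
\]
and invoke the elementary Sobolev mapping property $|\bar\partial\phi|_{-1/2}\le C|\phi|_{1/2}$. This last inequality follows directly from Parseval on $\mathbb{T}$, since the bound
\[
(|k|+|\tau|)\bigl(1+k^2+\tau^2\bigr)^{-1/4}\;\le\; C\bigl(1+k^2+\tau^2\bigr)^{1/4}
\]
is trivial in frequency space. Combining the above estimates and taking the supremum over $\phi$ with $|\phi|_{1/2}\le 1$ yields $|\bar\partial F|_{-1/2}\le C|F|_{1/2}$, as desired.

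I do not expect any real obstacle in this proof: it is a textbook duality exercise, and the Fourier-analytic ingredient is immediate on $\mathbb{T}$. The only point requiring mild care is bookkeeping — namely, fixing precisely how $\bar\partial=(\partial_1,\partial_t)$ is paired against test functions $\phi\in H^{1/2}(\Gamma)$ so that the integration-by-parts step produces no spurious contributions. Once that convention is set, the rest is automatic from the Fourier characterization of the boundary Sobolev spaces.
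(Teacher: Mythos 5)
The paper does not actually prove this lemma; it cites \cite[Lemma 8.5]{DS_10}, where the estimate is stated for the \emph{horizontal} (spatial tangential) derivative and rests on exactly the duality/Fourier observation you use, so your route is the expected one. Two caveats. First, your write-up is structurally circular: after integrating by parts you invoke ``$\abs{\bar\partial\phi}_{-\hal}\leq C\abs{\phi}_{\hal}$'', which is precisely the inequality being proved. Since you then justify it by Parseval, the argument is salvageable, but the duality pairing and integration by parts are redundant: the Parseval computation applied directly to $F$ (with $\bar\partial F$ interpreted distributionally via $\langle\bar\partial F,\phi\rangle:=-\langle F,\bar\partial\phi\rangle$), together with the standard identification of $H^{\hal}(\mathbb{T})^\prime$ with $H^{-\hal}(\mathbb{T})$, is the whole proof. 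Second, and more substantively, your frequency bookkeeping with the pair $(k,\tau)$ treats $\Gamma$ as a space--time manifold, whereas in this paper $\abs{\cdot}_{\hal}$ and $\abs{\cdot}_{-\hal}$ are fixed-time Sobolev norms on $\Gamma=\mathbb{T}\times\{0,1\}$, so there is no time frequency in these norms. For $\partial_1$ your multiplier bound $\abs{k}(1+k^2)^{-1/4}\leq(1+k^2)^{1/4}$ on $\mathbb{T}$ is correct and is all that is needed (and is how the lemma is actually applied in the paper: one always pulls out a $\partial_1$). For $\partial_t$ at a fixed time the inequality cannot follow from such an argument and is in fact false as a fixed-time statement (take $F(x_1,t)=g(x_1)h(t)$ with $h(t)$ small and $h'(t)$ large), so you should either restrict $\bar\partial$ to the spatial tangential derivative, as in the cited reference, or make explicit that you are using space--time norms, which is not what the paper's notation means.
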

\begin{proof}
	See, for instance, \cite[Lemma 8.5]{DS_10}.
\end{proof}
\subsection{Korn's inequality}
\begin{equation}\label{korn}
        \norm{\nabla f}_0^2\ls P\left(\norm{\nabla\eta}_{2}^2\right)\left(\norm{S_\eta(f)}_0^2+\norm{f}_0^2\right).
\end{equation}
\begin{proof}
        We follow the idea in \cite{MasRou} to prove the above Korn's inequality. We set $\tilde f= Ff$, $S(\tilde f)_{ij}=\hal(\pa_j\tilde f_i+\pa_i\tilde f_j)$, then we have
        \begin{equation}
                \label{l11}
                \begin{split}
                        \norm{\nabla f}_0^2&\ls \norm{\nabla\eta}_{L^\infty}^2\norm{\nabla \tilde f}_0^2+\norm{\nabla^2\eta}_{L^4}^2\norm{\tilde f}_{L^4}^2\\&\ls \norm{\eta}_2^2\left(\norm{\nabla\tilde f}_0^2+\norm{\tilde f}_0^2\right) \\&\ls P\left(\norm{\nabla\eta}_2^2\right)\left(\norm{S(\tilde f)}_0^2+\norm{f}_0^2\right)
                \end{split}
        \end{equation}
with the classical Korn's inequality in $\Omega$.

On the other hand, by straightforward caculation, we have
        \begin{equation}
                S(\tilde f)_{ij}=\pa_j(F_{ik}f_k)+\pa_i(F_{jm}f_m)=\nabla^2\eta f+ F_{ik}\pa_jf_k+F_{jm}\pa_if_m
        \end{equation}
Using $J=\pa_1\eta_1\pa_2\eta_2-\pa_1\eta_2\pa_2\eta_1=1$, we can verify
        \begin{equation}
                \begin{split}
                        S(\tilde f)_{12}=S(\tilde f)_{21}=&\hal \left(F_{1k}\pa_2f_k+F_{2m}\pa_1f_m\right)+\nabla^2\eta f\\=&\alpha (S_{\eta}f)_{11}+\beta (S_\eta f)_{22}+\gamma\left((S_\eta f)_{12}+(S_\eta f)_{21}\right)+\nabla^2\eta f
                \end{split}
\end{equation}
where 
\begin{equation*}
        \alpha=\pa_1\eta_1\pa_1\eta_2,\beta=\pa_2\eta_1\pa_2\eta_2, \gamma=\hal(\pa_1\eta_1\pa_2\eta_2+\pa_1\eta_2\pa_2\eta_1).
\end{equation*}
Similarly, we also have
\begin{equation}
        \begin{split}
                S(\tilde f)_{11}=&(\pa_1\eta_1)^2 (S_{\eta}f)_{11}+(\pa_2\eta_1)^2(S_{\eta}f)_{22}+\pa_1\eta_1\pa_2\eta_1( (S_{\eta}f)_{12}+ (S_{\eta}f)_{21})+\nabla^2\eta f\\S(\tilde f)_{22}=&(\pa_1\eta_2)^2 (S_{\eta}f)_{11}+(\pa_2\eta_2)^2(S_{\eta}f)_{22}+\pa_1\eta_2\pa_2\eta_2( (S_{\eta}f)_{12}+ (S_{\eta}f)_{21})+\nabla^2\eta f.
\end{split}
\end{equation}
Thus, we arrive at
\begin{equation}
        \label{l2}
        \norm{S(\tilde f)}_0^2\ls P\left(\norm{\nabla\eta}_{L^\infty}^2\right)\norm{S_\eta(f)}_0^2+\norm{\nabla^2\eta}_{L^4}^2\norm{f}_{L^4}^2
\end{equation}
Combining the estimates \eqref{l11} and \eqref{l2}, we have
\begin{equation}
        \begin{split}
                \norm{\nabla f}_0^2 &\leq P\left(\norm{\nabla\eta}_2^2\right)\left(\norm{S_\eta(f)}_0^2+\norm{f}_0\norm{\nabla f}_0+\norm{f}_0^2\right)\\&\leq P\left(\norm{\nabla\eta}_2^2\right)\left(\norm{S_\eta(f)}_0^2+\norm{f}_0^2\right)+\hal\norm{\nabla f}_0^2.
\end{split}
\end{equation}
Then we arrive at the conclusion. 
\end{proof}
\subsection{Geometric identities}
We recall some useful identities that can be checked directly.
For $F$, we have the following identities for differentiating its determinant $J$ and $\a$:
\begin{align}
\label{dJ}
&\partial J=\dfrac{\partial J}{\partial {F}_{ij}}\partial {F}_{ij} =  {\mathcal{A}}_{ij}\partial {F}_{ij},\\
&\partial {F}^{-1}_{ij}  = -F^{-1}_{i\ell}\partial {F}_{\ell m}{F}^{-1}_{mj},
\label{partialF}
\end{align}
where $\partial$ can be the $\pa_1,\pa_2$, or $\partial_t$ operator. Moreover, we have the Piola identity 
\begin{equation}\label{polia}
\partial_j\left({\mathcal{A}}_{ij}\right) =0.
\end{equation}

For the outward unit normal of $\Gamma(t)$, $n=\frac{\a N}{\abs{\a N}}$, and the unit tangential vector $\tau:=\frac{\pa_1\eta}{\abs{\pa_1\eta}}$, we have the identities
\begin{align}\label{decomp}
I&=n\otimes n + \tau\otimes \tau,\\
\left(\begin{array}{cc}
0 & -N_2\\
N_2 & 0
\end{array} 
\right )&=n\otimes\tau-\tau\otimes n,\label{decomp2}
\end{align}
where we use the fact that $\a_{\cdot 2}=\pa_1\eta^\perp=(-\pa_1\eta_2,\pa_1\eta_1)^{\text{T}}$.

We also have the following important antisymmetric structure for $\eta$ and $\eta^\perp$:
\begin{equation}
\label{anti}
\pa^a\eta\cdot\pa^b\eta^\perp=-\pa^b\eta\cdot\pa^a\eta^\perp.
\end{equation}
As a result, the following equalities hold:
\begin{align}\label{in2}
\partial^a\a_{k2}\partial^b\pa_1\eta_k&=-\partial^b\a_{k2}\partial^a\pa_1\eta_k,\\
\label{in3}\pa^a\a N\cdot\tau&=-\pa^a\pa_1\eta\cdot n,\\
\label{in4}\pa^a\a N \cdot n&=\pa^a\pa_1\eta\cdot \tau.
\end{align}
The above equalities are quite useful in the estimation of the boundary integral of the tangential energy estimates.
\section{Modified viscoelastic system}
In this section, we introduce a modified viscoelastic system with a smooth externel force as our approximate system:
\begin{equation}\label{approximate}
\begin{cases}
\partial_t\eta =v&\text{in } \Omega,\\
\partial_tv  +J\nabla_{\eta} q -\Delta\eta-\epsilon J\Delta_\eta v+f^\epsilon =0 &\text{in } \Omega,\\
\Div_{\eta} v = 0 &\text{in  }\Omega,\\
-(q+1)\a N+FN+2\epsilon S_\eta(v)\a N-\epsilon\Psi N= \partial_1\left(\dfrac{\partial_1\eta}{\abs{\pa_1\eta}}\right)& \text{on  }\Gamma,\\
 (\eta,v)\mid_{t=0} =(\eta_0^\epsilon, v_0^\epsilon).
 \end{cases}
\end{equation}
Here, $f^\epsilon:=\phi^\epsilon(x)+\epsilon\nabla\cdot \Psi(x,t)$; the definitions of $\phi^\epsilon$ and $\Psi$ are explained later. Here, $(\eta_0^\epsilon, v_0^\epsilon)$ is the set of smoothed initial data that converge to $(\eta_0, v_0)$ when $\epsilon\rightarrow 0$. It satisfies $\text{det}\nabla\eta_0^\epsilon=J_0^\epsilon\geq \hal, J_0^\epsilon\big|_\Gamma=1, \Div_{\eta_0}v^\epsilon_0=0$ and the compatibility condition \eqref{zcomp}, \eqref{1comp} and \eqref{2comp}. The construction of these smoothed initial data will be explained in the Appendix. Moreover, from the incompressibility equation \eqref{approximate}$_3$, we have 
\begin{equation}
	\label{J}
	\pa_t J=0, \,\,J=J_0^\epsilon(x).
\end{equation}

When $\phi^\epsilon=\Psi=0$ and $J=1$, the above approximate system becomes 
\begin{equation}\label{eq:viscoelastic}
\begin{cases}
\partial_t\eta =v&\text{in } \Omega,\\
\partial_tv  +\nabla_{\eta} q -\Delta\eta-\epsilon \Delta_\eta v =0 &\text{in } \Omega,\\
\Div_{\eta} v = 0 &\text{in  }\Omega,\\
-(q+1)\a N+FN+2\epsilon S_\eta(v)\a N= \partial_1\left(\dfrac{\partial_1\eta}{\abs{\pa_1\eta}}\right)& \text{on  }\Gamma,
 \end{cases}
\end{equation}
which is the Lagrangian formulation of the viscoelasticity \eqref{viscoelastic}. However, we cannot use this directly as an approximate system due to the issue of the compatibility of the initial data. That is, in general, the initial data $(\eta_0, v_0)$ used in \eqref{eq:elastic} do not meet the requirement for the initial data of the viscoelasticity system.

First, the viscoelasticity system requires higher-regularity initial data. For instance, in the viscoelasticity system, $\pa_tv(0)$ can be computed by using \eqref{eq:viscoelastic}$_2$:
\begin{equation*}
	\pa_tv(0)=\Delta \eta_0-\nabla_{\eta_0} q_0 +\epsilon \Delta_{\eta_0} v_0.
\end{equation*}
Comparing \eqref{vt0}, achieving the same regularity of $\pa_tv(0)$ will require one more spatial regularity for $v_0$. On the other hand, to ensure the solvability of the elliptic system to decide $\pa_t^kq(0)$, the initial data need to satisfy the following compatibility condition on $\Gamma$, $i=0,1,2$:
\begin{equation}
	\label{compforvisco}
	\Pi \left(-\sum_{\ell <i}C_\ell\left(\pa_t^\ell q\pa_t^{i-\ell}\a_{j2}+\pa_t^\ell\dfrac{\pa_1^2\eta_\alpha\a_{\alpha \ell}N_\ell}{\abs{\a_{\cdot 2}}^3}\pa_t^{i-\ell}\a_{j2}\right)+\pa_t^iF_{j2}+2\epsilon\pa_t^i\left(S_\eta(v)\a\right)_{k2} \right)\Bigg|_{t=0}=0,
\end{equation}
which may not hold for the initial data $(\eta_0, v_0)$ in \eqref{eq:elastic}.

To overcome these issues of compatibility, we introduce smoothed initial data $(\eta_0^\epsilon, v_0^\epsilon)$ and add modification terms $f^\epsilon:=\phi^\epsilon+\epsilon\Psi$ in the approximate system \eqref{approximate}. First, we construct a smoothed $(\eta_0^\epsilon, v_0^\epsilon)$ satisfying the zeroth and 1st compatibility conditions \eqref{zcomp} and \eqref{1comp}. Next, we introduce $\phi^\epsilon(x)$ to make $\pa_tv^\epsilon(0)$ satisfy the 2nd compatibility condition \eqref{2comp}. The details of the construction can be found in the Appendix. Lastly, we introduce the compensator matrix $\Psi$ so that $(\eta_0^\epsilon, v_0^\epsilon)$ and the corresponding $\pa_tv^\epsilon(0), \pa_t^2v^\epsilon(0)$, which are calculated by \eqref{vt0} and \eqref{vt20}, satisfy the compatibility condition for the approximate system:
\begin{equation*}
	\Pi \left(-\sum_{\ell <i}C_\ell\left(\pa_t^\ell q\pa_t^{i-\ell}\a_{j2}+\pa_t^\ell\dfrac{\pa_1^2\eta_\alpha\a_{\alpha \ell}N_\ell}{\abs{\a_{\cdot 2}}^3}\pa_t^{i-\ell}\a_{j2}\right)+\pa_t^iF_{j2}+2\epsilon\pa_t^i\left(S_\eta(v)\a\right)_{j2}-\epsilon\pa_t^i\Psi_{j2} \right)\Bigg|_{t=0}=0.
\end{equation*}
To achieve this, we construct $\Psi$ as follows:
\begin{equation}
\Psi (x,t) =2\left(S_\eta(v)\a(0)+\pa_t\left(S_\eta(v)\a\right)(0)t+\hal\pa_t^2\left(S_{\eta}(v)\a\right)(0)t^2\right).
\end{equation}
In this way, at $t=0$, we essentially add nothing to equations \eqref{eq:elastic}$_2$ and \eqref{eq:elastic}$_4$; then, $\pa_t^iv^\epsilon(0)$ are the same and the compatibility conditions for the approximate system are the same as \eqref{zcomp}, \eqref{1comp} and \eqref{2comp}.

Finally, by choosing a suitable mollifier parameter (see Appendix), we obtain the following estimates
\begin{equation}
	\label{estforpsi}
	\norm{\phi^\epsilon}_2^2+\norm{\sqrt\epsilon\nabla\Psi}_{\X^2}^2\leq M_0=P(\mathfrak E(0))
\end{equation}
for the modification terms.
\section{Viscosity-independent a priori estimates}
The purpose of this section is to derive the $\epsilon$-independent estimates of the solutions to \eqref{approximate}. We define the high-order energy functional for the approximate viscosity system \eqref{approximate}:
\begin{equation}
\begin{split}
	\mathfrak{E}^{\epsilon}(t) =&\int_0^t\left(\norm{\nabla\eta^\epsilon}_{\X^{3}}^2+\norm{\epsilon\nabla v^\epsilon}_{\X^3}^2+\norm{\sqrt\epsilon \bp^2\pa_1\nabla v^\epsilon}_0^2\right)\,d\mathsf{t}\\&+\norm{\bp^2\pa_1 \eta^\epsilon}_{\X^1}^2(t)+\abs{\pa_1^2\bp^2\eta^\epsilon\cdot n}_{0}^2(t)+\norm{\eta^\epsilon}_{\X^3}^2(t)+\norm{\sqrt\epsilon\nabla^2\eta^\epsilon}_{\X^2}^2(t)\\&+\int_0^t\norm{\pa_t^3v^\epsilon}_0^4+\norm{\pa_t^3\nabla\eta^\epsilon}_0^4+\abs{\pa_1\pa_t^3\eta^\epsilon\cdot n}_0^4+\epsilon^2\left(\int_0^\mathsf t\norm{\pa_t^3\nabla v^\epsilon}_0^2\right)^2\,d\mathsf t.
\end{split}
\end{equation}
and establish the following theorem:
\begin{theorem} \label{th43}
For sufficiently small $\epsilon$, $(v^\epsilon,\eta^\epsilon,q^\epsilon)$ is a solution to \eqref{approximate}, and there exists a time $T_1$ independent of $\epsilon$ such that
\begin{equation*}
\label{bound}
\sup_{t\in [0,T_1]}\mathfrak{E}^{\epsilon}(t)\leq 2M_0,
\end{equation*}
where $M_0=P\left(\mathfrak E(0)\right).$
\end{theorem}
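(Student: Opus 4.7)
The plan is to prove Theorem \ref{th43} by a standard continuation (bootstrap) argument. Fix $M = 4 M_0$ and assume the solution $(\eta^\epsilon, v^\epsilon, q^\epsilon)$ exists on some interval $[0,T^*]$ with $\sup_{[0,T^*]} \mathfrak E^\epsilon(t) \leq M$. The goal is to derive an a priori inequality of the schematic form
\begin{equation*}
\mathfrak E^\epsilon(t) \leq C M_0 + T\, P\bigl(\mathfrak E^\epsilon(t)\bigr),
\end{equation*}
with constants independent of $\epsilon$, so that choosing $T_1 > 0$ sufficiently small (depending only on $M_0$) improves the bound to $\mathfrak E^\epsilon \leq 2 M_0$ and allows continuation. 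The energy naturally splits into a tangential block (the $\bp^3$ norms and the boundary piece $\abs{\pa_1^2 \bp^2 \eta \cdot n}_0^2$) and a normal block (the $\norm{\eta}_{\X^3}$ and $\norm{\sqrt{\epsilon}\,\nabla^2\eta}_{\X^2}$ pieces), controlled respectively by a tangential energy identity and an inductive elliptic argument driven by a careful pressure estimate.

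For the tangential part, I would apply $\bp^3 = (\pa_1,\pa_t)^3$ to \eqref{approximate}$_2$, test with $\bp^3 v$, and integrate by parts. The bulk terms produce
\begin{equation*}
\hal \dtt \intd \bigl(\abs{\bp^3 v}^2 + \abs{\nabla \bp^3 \eta}^2\bigr) + 2\epsilon \intd \abs{S_\eta(\bp^3 v)}^2 + \intb \bigl(\bp^3 q\,\a N - \bp^3 FN - 2\epsilon\,\bp^3(S_\eta(v)\a)N\bigr)\cdot\bp^3 v\,d\sigma = \text{commutators},
\end{equation*}
where the lower-order commutators are bounded by \eqref{co1}--\eqref{co2} and the viscous quadratic form is coerced into $\epsilon\norm{\bp^3 \nabla v}_0^2$ via Korn's inequality \eqref{korn}. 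The boundary integral is then rewritten using \eqref{approximate}$_4$ exactly as in \eqref{fea}, producing the coercive surface contribution ST yielding $\tfrac{1}{2}\dtt \intb \abs{\pa_1\eta}^{-1}\abs{\bp^3\pa_1\eta\cdot n}^2\,d\sigma$ and the elastic contribution ET. The antisymmetry identity \eqref{anti}, together with $\a_{\cdot 2} = \pa_1 \eta^\perp$, converts ET into the time derivative of a symmetric bilinear form plus lower-order commutators bounded via the trace estimate \eqref{tre}. This yields uniform-in-$\epsilon$ control of $\norm{\bp^3 v}_0^2 + \norm{\bp^3 \nabla\eta}_0^2 + \abs{\pa_1^2\bp^2\eta\cdot n}_0^2 + \epsilon\int_0^t \norm{\bp^3 \nabla v}_0^2$. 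The top temporal level $\pa_t^3$ is handled by the same identity integrated in time, producing the $L^4_t$-quantities recorded in $\mathfrak E^\epsilon$; here the $\epsilon^2$ term is absorbed by the bulk dissipation.

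The normal block is controlled by the induction suggested in Section \ref{strategy}. Rewriting \eqref{approximate}$_2$ as
\begin{equation*}
-\pa_2^2\eta - \epsilon\abs{\a_{\cdot 2}}^2 \pa_2^2 v = \pa_1^2 \eta - \pa_t v + \nabla_\eta q + \epsilon\!\!\sum_{\alpha+\beta\neq 4}\!\!\pa_\alpha(\a_{k\alpha}\a_{k\beta}\pa_\beta v) + \epsilon\,\pa_2\abs{\a_{\cdot 2}}^2 \pa_2 v - f^\epsilon,
\end{equation*}
the right-hand side carries at most one $\pa_2$ derivative, so using $\eta(t) = \eta_0^\epsilon + \int_0^t v$ permits an induction on the number of normal derivatives, reducing matters to bounding $\norm{\bp^2 \nabla q}_0^2$ and $\norm{\bp\,\pa_2 \nabla q}_0^2$ by $M_0 + TP(\mathfrak E^\epsilon)$. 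The pressure is controlled from the divergence-form elliptic problem with Neumann data $N_\ell \a_{i\ell}\a_{ij}\pa_j q = -\pa_t v_i \a_{i\ell} N_\ell + \Delta\eta_i \a_{i\ell} N_\ell + \epsilon(\ldots)$ on $\Gamma$. The essential new ingredient is the boundary identity obtained by projecting the viscous force balance \eqref{approximate}$_4$ orthogonally onto the tangent direction, generalizing \eqref{p2b} to
\begin{equation*}
F_{i2} + \epsilon\,\pa_{\eta_j} v_i\, \a_{j2} = \frac{1}{\abs{\a_{\cdot 2}}^2}\bigl(\a_{i2} - 2\epsilon\,\pa_t\a_{j2}\,\a_{j2}\,\a_{i2}\bigr) + \epsilon\,\pa_t\a_{i2} \qquad \text{on }\Gamma.
\end{equation*}
Combined with $\Delta\eta_i \a_{i2} = -\pa_1\eta_i \pa_1\a_{i2} - \a_{i1}\pa_1 F_{i2}$, this reduces the offending top-order boundary term to quantities carrying only $\pa_1$ derivatives, so Lemma \ref{normal trace} together with $\abs{\pa_1 f}_{-\hal}\ls \abs{f}_{\hal}\ls \norm{f}_1$ bounds $\abs{\bp^2 \Delta\eta_i \a_{i\ell} N_\ell}_{-\hal}$ by the tangential energy alone.

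The main obstacle I expect is to perform these steps with every viscous error either absorbed into $\epsilon\int_0^t \norm{\bp^3 \nabla v}_0^2$ on the left or compensated by $\norm{\sqrt{\epsilon}\,\nabla^2\eta}_{\X^2}^2$, so that the final estimate is genuinely uniform in $\epsilon$. In particular, the tangent-space projection on $\Gamma$ must be applied to the combined elastic-viscous stress $FN + 2\epsilon S_\eta(v)\a N - \epsilon\Psi N$ as a single object, rather than to the elastic part alone, since splitting would cost an $\epsilon^{-1/2}$ in the normal trace. The smoothing force contributes only harmless $O(M_0)$ terms thanks to \eqref{estforpsi}, and the construction of $(\eta_0^\epsilon, v_0^\epsilon, \Psi)$ in the Appendix ensures that the $\pa_t^j v^\epsilon(0)$ computed via \eqref{vt0} and \eqref{vt20} are bounded by $P(\mathfrak E(0))$ and satisfy the modified compatibility condition at $t=0$. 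Collecting the tangential and normal bounds and invoking Gronwall closes the bootstrap, yielding $\sup_{[0,T_1]} \mathfrak E^\epsilon \leq 2 M_0$ on a time $T_1 = T_1(M_0)$ independent of $\epsilon$.
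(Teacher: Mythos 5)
Your roadmap coincides with the paper's for most of the argument: the $\bp^2\pa_1$ tangential estimate via the rewritten force balance, the symmetric/antisymmetric treatment of the elastic boundary term, the projection identity applied to the combined stress $FN+2\epsilon S_\eta(v)\a N-\epsilon\Psi N$ to get the pressure estimate, the inductive normal-derivative argument, and the final absorption with $\delta$ small and $T$ small (the paper does not actually need Gronwall, only the polynomial inequality $\mathfrak E^\epsilon\leq M_0+\delta\sup\mathfrak E^\epsilon+T^{1/4}P(\sup\mathfrak E^\epsilon)$, but that is cosmetic).

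The genuine gap is your treatment of the pure time-derivative block. You assert that the $\pa_t^3$ level is "handled by the same identity integrated in time," but that is precisely where the uniform scheme breaks down: the pressure estimate of Proposition \ref{pressure} only reaches $\bp^\ell q$ with $\ell\leq 2$, so there is no bound for $\norm{\pa_t^3 q}_0$ and, consequently, no analogue of \eqref{sigmae} giving $\abs{\pa_t^3\pa_1\eta\cdot n}_{\hal}$. The surface-tension boundary term in the $\bp^2\pa_1$ case (the $R_{a11}$/\eqref{e03} manipulation) is closed exactly by pairing $\abs{\bp^2\pa_1^2\eta_i\a_{i2}}_{\hal}$ against $\abs{\bp^2\pa_1 v}_{-\hal}$; for three time derivatives this pairing is unavailable, so "the same identity" leaves an uncontrolled boundary term. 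The paper's $\pa_t^3$ estimate (Proposition \ref{tane2}) therefore requires additional ideas you do not supply: Alinhac's good unknowns $\mathcal V=\pa_t^3v-\pa_t^3\eta\cdot\nabla_\eta v$, $\mathcal Q=\pa_t^3q-\pa_t^3\eta\cdot\nabla_\eta q$ to restructure the interior terms; a different decomposition of the boundary integrals using \eqref{decomp}, \eqref{in3}--\eqref{in4}, including the nontrivial cancellation $I_a+I_b$ and the combination of $R_{b113}$ with the divergence-constraint term $R_c$ (which has no counterpart in your sketch); and, because only $\int_0^t\norm{\pa_t^2 q}_1^2$ is available, the resulting differential inequality must be squared and integrated in time once more, which is why $\mathfrak E^\epsilon$ records the $\pa_t^3$ quantities only in $L^4_t$ rather than $L^\infty_t$. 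You note the $L^4_t$ structure of the energy but give no mechanism producing it; as written, your tangential step claims an $L^\infty_t$ bound at the $\pa_t^3$ level that cannot be obtained with the pressure information at hand, so this portion of the proof is missing rather than merely compressed.
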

Since $J_0\geq\hal$, there exists a $c_0$ such that $\abs{\pa_1\eta_0}\geq c_0>0$. We take the time $T_{\epsilon}>0$ to be sufficiently small so that for $t\in[0,T_{\epsilon}]$,
\begin{align}
	\label{inin3}\abs{\a_{ij}(t)-\a_{ij}(0)}\leq \dfrac{1}{8}c_0, \,\, \abs{\pa_1\eta}\geq \dfrac{1}{4}c_0.
\end{align}

We begin our $\epsilon$-independent a priori estimates with the pressure estimates. For notational simplification, we will not explicitly write the dependence of the solution on $\epsilon$ in the rest of this section.
\subsection{Pressure estimate}\label{pressure1}

In the estimates in the later sections, one needs to estimate the pressure $q$. For this, applying $J\Div_\eta$ to \eqref{approximate}$_2$, and using \eqref{approximate}$_3$ and the Piola indentity \eqref{polia}, one obtains
\begin{equation}\label{ell}
	-\Div(E\nabla q )=G_1+\epsilon \a_{ik}\pa_{kj}\Psi_{ij}+\a_{jk}\pa_k\phi^\epsilon_j,
\end{equation}
where the matrix $E=\a^T\a$ and the function $G_1$ are given by
\begin{align}
&G_1=\partial_t\a_{ij}\partial_j v_i+\pa_k\a_{ij}\pa_{kj}\eta_i-\Delta J\sim\pa_1v\nabla v+\nabla^2\eta\nabla^2\eta-\Delta J.
\end{align}
Here, we use $\a_{ij}\pa_{jk}\eta_i=\pa_j(\a_{ij}F_{ik})=\Delta J$.
Note that by \eqref{inin3}, the matrix $E$ is symmetric and positive.

For the boundary condition for $q$, we have the Dirichlet boundary condition 
\begin{equation}
\label{bdq}
q+1=G_2:=-\dfrac{\pa_1^2\eta_i\a_{i\ell}N_\ell}{\abs{\pa_1\eta}^3}+\dfrac{1+2\epsilon\pa_t \a_{j2}\a_{j2}+\epsilon \Psi_{i2}\a_{i2}}{\abs{\pa_1\eta}^2}
\end{equation}
by timing $\dfrac{\a N}{\abs{\a N}^2}$ on the boundary condition \eqref{approximate}$_4$ and using \eqref{J} and $\a_{ik}\pa_k v_j\a_{j2}=-J\pa_t\a_{i2}$.

We also have the Neumann boundary condition
\begin{equation}
	\a_{i2}\a_{ij}\pa_j q=G_3:=-\pa_tv_i \a_{i2}+\Delta\eta_i \a_{i2}+\epsilon J\Delta_\eta v_i\a_{i2}-\epsilon\pa_j\Psi_{ij}\a_{i2}-\phi_i^\epsilon \a_{i2}
\end{equation}
by timing $\a_{i2}$ on equation \eqref{approximate}$_2$. Both the Dirichlet condition and the Neumann condition are used.

We now prove the estimate for the pressure $q$.
\begin{proposition}\label{pressure}For sufficiently small $\epsilon$, the following estimate holds with $T\leq T_\epsilon$ and $0<\delta<1$:
\begin{equation}
\label{press2}
\int_0^t\norm{q}_{\X^2}^2+\norm{\nabla q}^2_{\X^2}\,d{\mathsf t}\leq T^{1/4}P\left(\sup_{t\in [0,T]}\mathfrak E^{\epsilon}(t)\right)+\delta\sup_{t\in [0,T]}\mathfrak E^{\epsilon}(t).
\end{equation}
\end{proposition}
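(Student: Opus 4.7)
The idea is to view \eqref{ell} as a divergence-form elliptic equation for $q$ on $\Omega$, supplemented by both the Dirichlet condition \eqref{bdq} and the Neumann condition $\a_{i2}\a_{ij}\pa_j q=G_3$. For each $k\in\{0,1,2\}$ I differentiate \eqref{ell} $k$ times in $t$ and invoke elliptic regularity --- selectively exploiting the Dirichlet or the Neumann formulation depending on which boundary regularity of the data is available --- to bound $\pa_t^k q$ in $H^{2-k}(\Omega)$ in terms of the interior source, the boundary datum, and the commutators produced by the time differentiations. At the base level $k=0$ the Dirichlet route closes cleanly: the worst term in $G_2$ involves $\pa_1^2\eta$ and $\pa_t\a$, both controlled in $H^{3/2}(\Gamma)$ by $\mathfrak E^{\epsilon}$, while the interior source $G_1+\epsilon\a\pa^2\Psi+\a\pa\phi^\epsilon$ lies in $L^2(\Omega)$ via Sobolev embedding and the smoothing bound \eqref{estforpsi}.

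For $k=1,2$ the Neumann formulation is employed so that the required boundary regularity drops to $H^{-\hal}(\Gamma)$. The delicate ingredient in $\pa_t^k G_3$ is $\pa_t^k(\Delta\eta_i\a_{i2})$, which na\"ively carries two normal derivatives. The key structural reduction is the identity on $\Gamma$ (using the Piola identity \eqref{polia} together with the cofactor relation $F_{ik}\a_{ij}=J\delta_{kj}$ and $J|_{\Gamma}=1$)
\[
\Delta\eta_i\a_{i2}=-\pa_1\eta_i\pa_1\a_{i2}-\a_{i1}\pa_1 F_{i2},
\]
after which only tangential derivatives appear at the top order. The trace inequality \eqref{peadv} combined with $\abs{\pa_1 f}_{-\hal}\lesssim\norm{f}_1$ then gives $\abs{\pa_t^k(\Delta\eta_i\a_{i2})}_{-\hal}\lesssim P(\mathfrak E^{\epsilon})$. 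The remaining piece $\pa_t^{k+1}v_i\a_{i2}$ is handled by the normal-trace Lemma~\ref{normal trace}, with $\Div_\eta\pa_t^{k+1}v$ obtained from $(k+1)$ time derivatives of the incompressibility equation in \eqref{approximate}; the viscous and $\Psi$ contributions in $G_3$ carry an explicit $\epsilon$ and are absorbed by the dissipation $\norm{\sqrt\epsilon\nabla v}_{\X^3}$ already present in $\mathfrak E^{\epsilon}$.

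Squaring these pointwise-in-$t$ bounds and integrating over $[0,T]$, the terms controlled by $P(\sup\mathfrak E^{\epsilon})$ pointwise contribute $T\cdot P$. The contributions featuring $\norm{\pa_t^3 v}_0$, $\norm{\pa_t^3\nabla\eta}_0$ or $\abs{\pa_1\pa_t^3\eta\cdot n}_0$ are only square-summable in $t$ through the $L^4_t$ entries of $\mathfrak E^{\epsilon}$; a H\"older step
\[
\int_0^T\norm{\pa_t^3 v}_0^2\,d\mathsf t\le T^{1/2}\Big(\int_0^T\norm{\pa_t^3 v}_0^4\,d\mathsf t\Big)^{1/2}\le T^{1/2}(\mathfrak E^{\epsilon})^{1/2},
\]
followed by a Young's inequality against the $\delta\sup\mathfrak E^{\epsilon}$ buffer produces the announced $T^{1/4}P+\delta\sup$ form. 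I expect the main obstacle to be the $\pa_t^2 q$ bound, where each appearance of $\pa_t^3 v$ and of $\pa_t^2\Delta\eta$ on $\Gamma$ must be threaded through the Piola-based tangential rewriting or the normal-trace lemma, since any transverse derivative escaping these routes cannot be absorbed by $\mathfrak E^{\epsilon}$.
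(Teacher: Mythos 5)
Your proposal correctly identifies the paper's central structural trick (rewriting $\Delta\eta_i\a_{i2}$ on $\Gamma$ so that only tangential derivatives appear at top order, the normal-trace lemma for $\pa_t^{k+1}v_i\a_{i2}$ via incompressibility, and H\"older in time to generate $T^{1/4}$), but two steps of your plan would fail. First, taking only time derivatives of \eqref{ell} and relying on elliptic regularity to supply all spatial derivatives cannot reach the norm $\norm{\nabla q}_{\X^2}$: for $\nabla q\in H^2$ the Dirichlet problem needs $G_2\in H^{5/2}(\Gamma)$, i.e.\ full boundary control of $\pa_1^2\eta$ at order $5/2$ (all components), whereas $\mathfrak E^\epsilon$ only provides the \emph{normal} component $\abs{\pa_1^2\bp^2\eta\cdot n}_0$ pointwise in time and $\nabla\eta\in L^2_tH^3$; even your stated intermediate goal ($\pa_t^kq\in H^{2-k}$) is one derivative short of the claim. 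This is exactly why the paper instead applies the full tangential family $\bp^\ell$, $\bp\in\{\pa_1,\pa_t\}$, $\abs{\ell}\le2$, performs variational (multiply by $\bp^\ell q$) estimates that require only $\abs{\bp^\ell G_2}_0$ and $\abs{\bp^\ell G_3}_{-\hal}$ — precisely the quantities the energy controls — and then recovers all normal derivatives of $q$ iteratively from the nondivergence form \eqref{np}.

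Second, the viscous boundary term cannot be split off and ``absorbed by the dissipation.'' The quantity you would face is $\abs{\epsilon\pa_t^k(J\Delta_\eta v_i\a_{i2})}_{-\hal}$, which carries two normal derivatives of $v$ on $\Gamma$; at $k=2$ its trace would require something like $\norm{\epsilon\pa_t^2\nabla^3v}_0$, which is not in $\mathfrak E^\epsilon$ (the energy contains $\int\norm{\epsilon\nabla v}_{\X^3}^2$ and $\int\norm{\sqrt\epsilon\bp^2\pa_1\nabla v}_0^2$, not the norm $\norm{\sqrt\epsilon\nabla v}_{\X^3}$ you cite), and where a trace bound does exist it yields $P(\sup\mathfrak E^\epsilon)$ with no small prefactor $T^{1/4}$, $\delta$, or spare power of $\epsilon$, so it cannot be absorbed on the right of \eqref{press2}. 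The paper's resolution is to \emph{combine} the viscous stress with the elastic one before projecting: the projected dynamic boundary condition gives \eqref{pa1} for $F_{i2}+\epsilon\pa_{\eta_j}v_i\a_{j2}$, and the identity \eqref{bde} expresses $\Delta\eta_i\a_{i2}+\epsilon J\Delta_\eta v_i\a_{i2}$ through $\pa_1$ of that combination plus terms with at most one normal derivative of $v$, each residual viscous term carrying an extra factor of $\epsilon$ that is finally absorbed via the smallness condition $\epsilon P(M_0)\le C\delta$. Without this combination (and without the $\pa_1$-differentiated estimates), the argument as you outline it does not close, particularly at the $\pa_t^2$ level and for the top spatial regularity of $q$.
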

\begin{proof}

First, we multiply the equation \eqref{ell} by $q$ and then integrate over $\Omega$; we obtain
\begin{equation}\label{tpe}
\begin{split}
	&\int_\Omega E\nabla q \cdot \nabla q \\&= \int_\Omega G_1 q+\intb E\nabla q \cdot N q+\intb \left(\epsilon\pa_j\Psi_{ij}+\phi^\epsilon_i\right)\a_{i\ell}N_\ell q-\intd \a_{ik}\left(\epsilon\pa_{j}\Psi_{ij}+\phi^\epsilon_i\right) \pa_kq
\\&\ls \norm{G_1}_{L^{\frac{4}{3}}}\norm{q}_1+\abs{G_3+\left(\epsilon\pa_j\Psi_{ij}+\phi^\epsilon_i\right)\a_{i2}}_{-\hal}\norm{q}_1 +\norm{\a_{ik}\left(\epsilon\pa_j\Psi_{ij}+\phi^\epsilon_i\right)}_0\norm{q}_1
\end{split}
\end{equation}
Here, we used Sobolev's embedding theorem $\norm{f}_{L^4}\ls \norm{f}_1$.
With the Poincare inequality 
\begin{equation*}
\norm{f}_0\ls \abs{f}_0+\norm{\pa_2f}_0
\end{equation*}
and the estimate \eqref{estforpsi}, we have
\begin{equation}
\label{pee}
\begin{split}
	\norm{q}_1^2&\ls \norm{G_1}_{L^{\frac{4}{3}}}^2+\abs{G_2-1}_0^2+\abs{G_3+\left(\epsilon\pa_j\Psi_{ij}+\phi^\epsilon_i\right)\a_{i2}}_{-\hal}^2+\norm{\epsilon\nabla\eta\nabla\Psi+\nabla\eta\phi^\epsilon}_0^2\\&\ls\left(\abs{\epsilon\Psi_{i2}}_0^2+1\right)P\left(\norm{\eta}_{\X^3}^2\right)+\norm{\nabla\eta}_{L^\infty}\norm{\epsilon\nabla\Psi+\phi^\epsilon}_0^2\ls P\left(\norm{\eta}_{\X^3}^2\right).
\end{split}
\end{equation}

Next, applying $\bar\partial^\ell $ and $\abs{\ell}\leq 2$ to  equation \eqref{ell}, we obtain
\begin{equation*}
	-\Div\left(\bar\partial^\ell\left(E\nabla q\right) \right)= \bp^\ell G_1+\bp^\ell\left(\epsilon\a_{ik}\pa_{kj}\Psi_{ij}+\a_{jk}\pa_k\phi^\epsilon_j\right),
\end{equation*}
where
\begin{equation*}
	\bp^\ell G_1=\underbrace{\bp^\ell(\pa_1v\nabla v)+[\bp^\ell, \nabla^2\eta,\nabla^2\eta]}_{g_1}+\underbrace{\bp^\ell\nabla^2\eta\nabla^2\eta}_{g_2}+\underbrace{\bp^\ell\Delta J}_{g_3},
\end{equation*} 

By a similar method as for \eqref{tpe}, we have
\begin{equation*}
	\begin{split}
	\intd \bp^\ell(E\nabla q)\cdot \nabla \bp^\ell q =&\intd \bp^\ell G_1\bp^\ell q+\intb \bp^\ell\left(G_3+\epsilon\pa_j\Psi_{ij}\a_{i2}+\phi^\epsilon_i\a_{i2}\right)\bp^\ell q\\&-\intd \bp^\ell\left(\a_{ik}(\epsilon\pa_j\Psi_{ij}+\phi_i^\epsilon)\right)\pa_k\bp^\ell q
	\\=&\intd g_1\bp^\ell q-\intd \bp^\ell\nabla\eta \nabla^3\eta\bp^\ell q-\intd \bp^\ell\nabla\eta\nabla^2\eta\nabla\bp^\ell q+\intb \bp^\ell\nabla\eta\nabla^2\eta\bp^\ell q\\&+\intb \bp^\ell\left(G_3+\epsilon\pa_j\Psi_{ij}\a_{i2}+\phi^\epsilon_i\a_{i2}\right)\bp^\ell q-\intd \bp^\ell\left(\a_{ik}(\epsilon\pa_j\Psi_{ij}+\phi_i^\epsilon)\right)\pa_k\bp^\ell q\\&-\intd \pa_1^{\ell-1}\Delta J\pa_1\bp^\ell q
	\end{split}
\end{equation*}
where integration by parts is used to deal with $\intd g_2\bp^\ell q$ and $\intd g_3\bp^\ell q$. Here, we also use the fact that $J=J_0^\epsilon$, which depends only on $x$.

Then, we obtain
\begin{equation}\label{dddddddd}
	\begin{split}
\norm{ \bar\partial^\ell q}_{1}^2
\ls &\norm{J}_{3}^2+\norm{g_1}_{L^{\frac{4}{3}}}^2+\norm{\bp^\ell\nabla\eta\nabla^3\eta}_{L^{\frac{4}{3}}}^2+\norm{\bp^\ell\nabla\eta\nabla^2\eta}_0^2+\abs{\bp^\ell\nabla\eta\nabla^2\eta}_{L^{\frac{4}{3}}}^2\\
&+\abs{\bp^\ell \left(G_3+\epsilon\pa_j\Psi_{ij}+\a_{j2}\phi^\epsilon_j\right)}_{-\hal}^2+\abs{\bp^\ell G_2}_0^2+\norm{\left[\bar\partial^{\ell},E\nabla\right] q}_{0}^2\\&+\norm{\epsilon\bp^\ell(\a_{ik}\pa_j\Psi_{ij})+\bp^\ell(\a_{jk}\phi^\epsilon_j)}_0^2.
\end{split}
\end{equation}
Here, we use H\"older's inequality, the Sobolev embedding theorem $\abs{f}_{L^4}\ls\abs{f}_{\hal}$, and the trace theorem to obtain 
\begin{equation*}
	\intb \bp^\ell\nabla\eta\nabla^2\eta\bp^\ell q \ls \abs{\bp^\ell\nabla\eta\nabla^2\eta}_{L^{\frac{4}{3}}}\abs{\bp^\ell q}_{\hal}\ls \abs{\bp^\ell\nabla\eta\nabla^2\eta}_{L^{\frac{4}{3}}}\norm{\bp^\ell q}_{1}.
\end{equation*}

We then estimate the right-hand side of \eqref{dddddddd}. First, we have
\begin{equation*}
	\begin{split}
		&\norm{g_1}_{L^{\frac{4}{3}}}^2+\norm{\bp^\ell\nabla\eta\nabla^3\eta}_{L^{\frac{4}{3}}}^2+\norm{\bp^\ell\nabla\eta\nabla^2\eta}_0^2\\\ls& \norm{\bp^\ell\nabla v}_0^2\norm{\nabla v}_{L^4}^2+\norm{\bp^\ell\nabla\eta}_{L^4}^2\norm{\nabla^3\eta}_{0}^2+(\ell-1)\norm{\bp\nabla^2\eta}_{L^4}^2\norm{\bp\nabla^2\eta}_{0}^2\\&+(\ell-1)\norm{\bp\pa_1v}_{L^4}^2\norm{\bp\nabla v}_{0}^2+\norm{\bp^\ell\nabla\eta}_{L^4}^2\norm{\nabla^2\eta}_{L^4}^2\\\ls &\left(\norm{\bp^\ell\nabla v}_{0}^2+\norm{\bp^\ell\nabla\eta}_{1}+(\ell-1)\norm{\bp^2\pa_1\eta}_{1}^2+(\ell-1)\norm{\bp\nabla^3\eta}_0\right)P\left(\norm{\eta}_{\X^3}\right).
	\end{split}
\end{equation*}
Here, we use Ladyzhenskaya's inequality
\begin{equation*}
	\norm{f}_{L^4}\ls \norm{f}_0^{\frac{1}{2}}\norm{\nabla f}_0^{\frac{1}{2}}
\end{equation*}
to obtain
\begin{equation*}
	\begin{split}
		\norm{\bp^\ell\nabla\eta}_{L^4}^2\ls \norm{\bp^\ell\nabla\eta}_0\norm{\bp^\ell\nabla^2\eta}_0, 
 \norm{\bp\nabla^2\eta}_{L^4}^2\ls \norm{\bp\nabla^2\eta}_0\norm{\bp\nabla^3\eta}_0.
\end{split}
\end{equation*}
Next, by using the trace estimate \eqref{tre} and $\abs{f}_{L^4}\ls\abs{f}_{\hal}\ls\norm{f}_1$, we obtain
\begin{equation*}
	\begin{split}
		\abs{\bp^\ell\nabla\eta\nabla^2\eta}_{L^{\frac{4}{3}}}^2\leq \abs{\bp^\ell\nabla\eta}_0^2\abs{\nabla^2\eta}_{L^4}^2&\ls\left(\norm{\bp^\ell\nabla\eta}_0^2+\norm{\bp^\ell\nabla\eta}_0\norm{\bp^\ell\nabla^2\eta}_0\right)\norm{\eta}_{\X^3}^2\\&\ls \left(1+\norm{\bp^\ell\nabla^2\eta}_{0}\right)P\left(\norm{\eta}_{\X^3}\right).
\end{split}
\end{equation*}

Second, we estimate $\abs{\bp^\ell G_2}_0$ and obtain
\begin{equation}
\label{est2}
\begin{split}
	\abs{\bp^\ell G_2}_0^2 &\ls \abs{\pa_1^2\bp^\ell\eta_i\a_{i2}}_0^2+\left(\abs{\epsilon  \bp^\ell\pa_1v}_{0}^2+\abs{\bp^\ell\pa_1\eta}_{L^4}^2+(\ell-1)\left(\abs{\bp\pa_1^2\eta}_{L^4}^2+\abs{\epsilon\bp\pa_1 v}_{L^4}^2\right)+1\right)P\left(\norm{\eta}^2_{\X^3}\right)\\&\quad+\left(\norm{\epsilon\nabla\Psi}_{\X^2}^2+\norm{\epsilon\Psi}_0^2\right)\left(\norm{\eta}_{\X^3}^2+\norm{\bp^\ell\pa_1\eta}_1^2\right)\\&\ls \abs{\pa_1^2\bp^\ell\eta_i\a_{i2}}_0^2+\left(\norm{\epsilon  \bp^\ell\pa_1v}_0^2+\norm{\epsilon\bp^\ell\pa_1v}_0\norm{\epsilon\bp^\ell\pa_1\nabla v}_0+\norm{\bp^2\pa_1\eta}_1^2+1\right)P\left(\norm{\eta}^2_{\X^3}\right)\\&\quad+\left(\norm{\epsilon\nabla\Psi}_{\X^2}^2+\norm{\epsilon\Psi}_0^2\right)\left(\norm{\eta}_{\X^3}^2+\norm{\bp^\ell\pa_1\eta}_1^2\right)
\end{split}
\end{equation}
with $\abs{f}_{L^p}\ls\abs{f}_{\hal}\ls\norm{f}_1$ and the trace estimate \eqref{tre}.

Third, we estimate $\abs{\bp^\ell \left(G_3+\epsilon\pa_j\Psi_{ij}\a_{i2}+\a_{j2}\phi^\epsilon_j\right)}_{-\hal}$. To do this, we recall the dynamic boundary condition:
\begin{equation*}
\label{dyb}
-(q+1)\a_{i2}+F_{i2}+2\epsilon S_\eta(v)_{ij}\a_{j2}-\epsilon\Psi_{i2}=\dfrac{\pa_1^2\eta_k\a_{k\ell}N_\ell}{\abs{\pa_1\eta}^3}\a_{i2}
\end{equation*}
and take the projection $\Pi f_i=f_i-\dfrac{f_k\a_{k2}\a_{i2}}{\abs{\a_{\cdot 2}}^2}$ on the above condition. Then, we obtain
\begin{equation*}
	\Pi (F_{i2}+2\epsilon S_{\eta}(v)_{ij}\a_{j2}+\epsilon\Psi_{i2})=0.
\end{equation*}
Thus, we have 
\begin{equation}
\label{pa1}
F_{i2}+\epsilon\pa_{\eta_j}v_i\a_{j2}+\epsilon\Pi\Psi_{i2}=\dfrac{1}{\abs{\a_{\cdot 2}}^2}\left(\a_{i2}-2\epsilon\pa_t\a_{j2}\a_{j2}\a_{i2}\right)+\epsilon\pa_t\a_{i2}
\end{equation}
by using $\pa_t\a_{j2}=-\pa_{\eta_j}v_i\a_{i2}$.

The key observation here is that the right-hand side of \eqref{pa1} contains only tangential derivatives, since $\a_{\cdot 2}=(-\pa_1\eta_2, \pa_1\eta_1)^{\text{T}}$. On the other hand, we have $G_3+\epsilon\pa_j\Psi_{ij}\a_{i2}+\a_{j2}\phi^\epsilon_j=\Delta\eta_i\a_{i2}+\epsilon J \Delta_\eta v_i\a_{i2}$, which can be calculated  as
\begin{equation}
\label{bde}
\begin{split}
	&\Delta \eta_i\a_{i2}+\epsilon J\Delta_{\eta}v_i\a_{i2}\\=&\pa_2J-\pa_1\eta_i\pa_1\a_{i2}-\a_{i1}\pa_1F_{i2}+\epsilon\a_{k1}\pa_1(\pa_{\eta_k}v_i)\a_{i2}+\epsilon\a_{k2}\pa_2(\pa_{\eta_k}v_i)\a_{i2}\\=&\pa_2J-\pa_1\eta_i\pa_1\a_{i2}-\a_{i1}\pa_1F_{i2}-\epsilon\a_{k1}\pa_1\pa_t\a_{k2}-\epsilon\a_{k1}\pa_1\a_{i2}\pa_{\eta_k}v_i\\&+\epsilon J\pa_{\eta_i}(\pa_{\eta_k}v_i)\a_{k2}-\epsilon\a_{i1}\pa_1(\pa_{\eta_k}v_i)\a_{k2}\\=&\pa_2J-\pa_1\eta_i\pa_1\a_{i2}-\a_{i1}\pa_1F_{i2}-\epsilon\a_{k1}\pa_1\pa_t\a_{k2}-\epsilon\a_{k1}\pa_1\a_{i2}\pa_{\eta_k}v_i\\&-\epsilon\a_{i1}\pa_1(\pa_{\eta_k}v_i\a_{k2})+\epsilon\a_{i1}\pa_1\a_{k2}\pa_{\eta_k}v_i\\=&\pa_2J-\pa_1\eta_i\pa_1\a_{i2}-\a_{i1}\pa_1(F_{i2}+\epsilon\pa_{\eta_k}v_i\a_{k2})-\epsilon\a_{k1}\pa_1\pa_t\a_{k2}-\epsilon\a_{k1}\pa_1\a_{i2}(\pa_{\eta_k}v_i-\pa_{\eta_i}v_k).
\end{split}
\end{equation}
Then, we estimate the terms of \eqref{bde} one by one. Noticing that $J=J_0^\epsilon(x)$ again, we have
\begin{equation*}
	\abs{\bp^\ell\pa_2 J}_{-\hal}^2\ls \norm{\pa_1\pa_2 J}_1^2\ls M_0.
\end{equation*}
Next, using the estimate \eqref{co123}, we have
\begin{equation*}
\begin{split}
\abs{\bp^\ell(\pa_1\eta_i\pa_1\a_{i2})}_{-\hal}^2&\ls \abs{\bp^\ell\pa_1^2\eta}_{-\hal}^2\abs{\pa_1\eta}_{1}^2+\abs{\bp^\ell\pa_1\eta}_{-\hal}^2\abs{\pa_1^2\eta}_{1}^2+\abs{\left[\bp^\ell, \pa_1^2\eta,\pa_1\eta\right]}_{-\hal}^2\\&\ls P\left(\norm{\bp^2\pa_1\eta}_1^2,\norm{\eta}_{\X^3}^2\right).
\end{split}
\end{equation*}
Using the expression \eqref{pa1} and the estimate \eqref{estforpsi},
\begin{equation*}
	\begin{split}
		&\abs{\bp^\ell\left(\a_{i1}\pa_1\left(\dfrac{1}{\abs{\a_{\cdot 2}}^2}\left(\a_{i2}-2\epsilon\pa_t\a_{j2}\a_{j2}\a_{i2}\right)+\epsilon\pa_t\a_{i2}-\epsilon\Pi\Psi_{i2}\right)\right)}_{-\hal}^2\\&\ls \abs{\bp^\ell(\nabla\eta \pa_1^2\eta\pa_1\eta)}_{-\hal}^2+\abs{\epsilon\bp^\ell(\nabla\eta\pa_1^2v\pa_1\eta)}_{-\hal}^2+\abs{\epsilon\bp^\ell(\nabla\eta\pa_1v\pa_1^2\eta\pa_1\eta)}_{-\hal}^2+\abs{\epsilon\bp^\ell(\a_{i1}\pa_1\Pi \Psi_{i2})}_{-\hal}^2\\&\ls\abs{\bp^\ell\pa_1^2\eta}_{-\hal}^2P\left(\norm{\eta}_{\X^3}^2,\norm{\epsilon \pa_1^2\eta}_{\X^2}^2\right)+\abs{\sqrt\epsilon\bp^\ell\nabla\eta}_{L^4}^2\abs{\sqrt\epsilon\pa_1^2 v+\sqrt\epsilon\pa_1^2\eta\pa_1v}_{L^4}^2P\left(\norm{\eta}_{\X^3}^2\right)\\&\quad+\abs{\epsilon\bp^\ell\pa_1^2v}_{-\hal}^2P\left(\norm{\eta}_{\X^3}^2\right)+\abs{\bp^\ell\nabla\eta}_{0}^2\abs{\pa_1^2\eta}_{1}^2P\left(\norm{\eta}_{\X^3}^2\right)+\abs{\sqrt\epsilon\bp^\ell\pa_1v}_{-\hal}^2\abs{\sqrt\epsilon\pa_1^2\eta}_1^2P\left(\norm{\eta}_{\X^3}^2\right)\\&\quad+\abs{\left[\bp^\ell,\nabla\eta,\pa_1^2\eta+\epsilon\pa_1^2v+\epsilon\pa_1^2\eta\pa_1v\right]}_0^2+\abs{\sqrt\epsilon\bp^\ell\nabla\eta}_{L^4}^2\abs{\sqrt\epsilon\pa_1\Pi\Psi}_{L^4}^2+\abs{\sqrt\epsilon\nabla\eta}_1^2\abs{\bp^\ell\pa_1(\Pi\Psi)}_{-\hal}^2\\&\ls P\left(\norm{\sqrt\epsilon\nabla^2\eta}_{\X^2}^2, \norm{\eta}_{\X^3}^2,\norm{\bp^2\pa_1\eta}_{1}^2\right)\left(1+\norm{\sqrt\epsilon\bp^\ell\nabla v}_0^2+\norm{\bp^\ell\nabla\eta}_{0}\norm{\bp^\ell\nabla^2\eta}_{0}+\norm{\bp\pa_1\nabla v}_0^2\right)\\&\quad+\epsilon P\left(\norm{\eta}_{\X^3}^2\right)\norm{\sqrt\epsilon\bp^\ell\pa_1v}_1^2+P\left(\norm{\eta}_{\X^3}^2,\norm{\sqrt\epsilon\nabla^2\eta}_{\X^2}^2\right)\left(\norm{\sqrt\epsilon\nabla\Psi}_{\X^2}^2+\norm{\sqrt\epsilon\Psi}_0^2\right).
	\end{split}
\end{equation*}
The remaining terms of \eqref{bde} can be estimated as:
\begin{equation*}
	\begin{split}
		\abs{\bp^\ell(\epsilon\a_{k1}\pa_1\pa_t\a_{k2})}_{-\hal}^2&\ls \abs{\sqrt\epsilon\bp^\ell\pa_1^2v}_{-\hal}^2\abs{\sqrt\epsilon\nabla\eta}_{1}^2+\abs{\sqrt\epsilon\pa_1^2v}_{L^4}^2\abs{\sqrt\epsilon\bp^\ell\nabla\eta}_{L^4}^2\\&\quad+\abs{\left[\bp^\ell,\sqrt\epsilon\pa_1^2v,\sqrt\epsilon\nabla\eta\right]}_{0}^2\\&\ls \epsilon\norm{\eta}_{\X^3}^2\norm{\sqrt\epsilon\bp^\ell\pa_1 v}_1^2+P\left(\norm{\eta}_{\X^3}^2,\norm{\sqrt\epsilon\nabla^2\eta}_{\X^2}^2\right),
	\end{split}
\end{equation*}
and
\begin{equation*}
\begin{split}
&\abs{\bp^\ell(\epsilon\a_{k1}\pa_1\a_{i2}\pa_{\eta_k}v_i)}_{-\hal}^2\\&\ls \abs{\epsilon\bp^\ell\nabla v}_{0}^2\abs{\nabla\eta\pa_1^2\eta\nabla\eta}_{1}^2+\abs{\bp^\ell\pa_1^2\eta}_{-\hal}^2\abs{\epsilon\nabla\eta\nabla v\nabla\eta}_{1}^2+\abs{\sqrt\epsilon\bp^\ell\nabla\eta}_{L^4}^2\abs{\sqrt\epsilon\pa_1^2
\eta\nabla v\nabla\eta}_{L^4}^2\\&\quad+\abs{\left[\bp^\ell,\pa_1^2\eta, \epsilon\nabla\eta\nabla v\nabla\eta\right]}_{0}^2\\&\ls \left(\norm{\epsilon\bp^\ell\nabla v}_{0}^2+\norm{\epsilon\bp^\ell\nabla v}_0\norm{\epsilon\bp^\ell\nabla^2v}_0+\norm{\sqrt\epsilon\nabla^2\eta}_{\X^2}^2+1\right)P\left(\norm{\eta}_{\X^3}^2,\norm{\bp^2\pa_1\nabla\eta}_0^2\right).
\end{split}
\end{equation*}
Here we use $\abs{f}_{-\hal}\leq \abs{f}_0, \abs{f}_{L^p}\ls\abs{f}_{\hal}\leq \norm{f}_1$ and the estimate \eqref{peadv}.


Last, we have the estimate that
\begin{equation*}
\begin{split}
	&\abs{\bp^\ell(\pa_tv_i\a_{i2})}_{-1/2}^2\\&\ls \abs{\bp^\ell\pa_t v_i\a_{i2}}_{-\hal}^2+\abs{\pa_tv_i\bp^\ell\a_{i2}}_{-\hal}^2+\abs{\left[\bp^\ell, \pa_t v,\pa_1\eta\right]}_0^2\\&\ls\norm{\eta}_{\X^3}^2\left(\norm{\bp^\ell\pa_tv}_0^2+\norm{J\Div_\eta\bp^\ell\pa_t v}_0^2\right)+\norm{\eta}_{\X^3}^2\norm{\bp^2\pa_1\eta}_1^2+(\ell-1)\norm{\bp\pa_tv}_1^2\norm{\eta}_{\X^3}^2\\&\ls \norm{\eta}_{\X^3}^2\left(\norm{\bp^\ell\pa_tv}_0^2+\norm{\bp^\ell\pa_t\nabla\eta}_0^2\norm{\nabla v}_{L^\infty}^2+\norm{\bp^2\pa_1\eta}_1^2+\norm{\bp\nabla\eta}_{L^\infty}^2\right)\\&\quad+(\ell-1)\norm{\bp^3\eta}_1^2\left(\norm{\eta}_{\X^3}^2+\norm{\bp\nabla\eta}_{L^\infty}^2+\norm{\bp^2\nabla \eta}_{L^3}^2\right)\\&\ls \norm{\eta}_{\X^3}^2\left(\norm{\bp^\ell\pa_t v}_0^2+\norm{\bp^\ell\pa_t\nabla\eta}_0^2\norm{\nabla v}_{L^\infty}^2+\norm{\bp^2\pa_1\eta}_1^2+\norm{\bp\nabla\eta}_{L^\infty}^2\right)\\&\quad+(\ell-1)\norm{\bp^3\eta}_1^2\left(\norm{\eta}_{\X^3}^2+\norm{\nabla\eta}_{\X^3}^{2/3}\norm{\eta}_{\X^3}^{4/3}\right),
\end{split}
\end{equation*} 
where we use the divergence-free condition $\Div_\eta v=J^{-1}(\pa_1\eta\nabla v+\nabla\eta\pa_1v) =0$, the Sobolev embedding inequality $\norm{f}_{L^p}\ls \norm{f}_1$, and $\norm{f}_{L^\infty}\ls\norm{f}_{W^{1,3}}\ls \norm{f}_1+\norm{\nabla f}_0^{2/3}\norm{\nabla^2f}_0^{1/3}$.

Now, we arrive at
\begin{equation}
\label{est3}
\begin{split}
	&\abs{\bp^\ell \left(G_3+\epsilon\pa_j\Psi_{ij}\a_{i2}+\a_{j2}\phi^\epsilon_j\right)}_{-\hal}^2\\&\ls \norm{\bp^\ell\pa_tv}_0^2+ P\left(\norm{\bp^2\pa_1\eta}_1^2, \norm{\sqrt\epsilon\nabla^2\eta}_{\X^2}^2, \norm{\eta}_{\X^3}^2\right)\left(1+\norm{\sqrt\epsilon\bp^\ell\nabla v}_0^2+\norm{\bp\pa_1\nabla v}_0^2\right)\\&\quad+P\left(\norm{\bp^2\pa_1\eta}_1^2, \norm{\sqrt\epsilon\nabla^2\eta}_{\X^2}^2, \norm{\eta}_{\X^3}^2\right)\left(\norm{\bp^\ell\nabla\eta}_{0}\norm{\bp^\ell\nabla^2\eta}_{0}+\norm{\epsilon\bp^\ell\nabla v}_0^2+\norm{\epsilon\bp^\ell\nabla v}_0\norm{\epsilon\bp^\ell\nabla^2v}_0\right)\\&\quad +\norm{\eta}_{\X^3}^2\left(\norm{\bp^\ell\pa_t\nabla\eta}_0^2\norm{\nabla v}_{L^\infty}^2+\norm{\bp\nabla\eta}_{L^\infty}^2\right)\\&\quad +(\ell-1)\norm{\bp^3\eta}_1^2\norm{\nabla\eta}_{\X^3}^{2/3}\norm{\eta}_{\X^3}^{4/3}+\epsilon P\left(\norm{\eta}_{\X^3}^2\right)\norm{\sqrt\epsilon\bp^\ell\pa_1 v}_1^2.
\end{split}
\end{equation}
For $\norm{\bp^\ell(\a_{jk}\phi^\epsilon_j)}_0^2$, we have
\begin{equation*}
	\norm{\bp^\ell(\a_{jk}\phi^\epsilon_j)}_0^2 \ls \norm{\eta}_{\X^3}^2\norm{\phi^\epsilon}_2^2 \ls P\left(\norm{\eta}_{\X^3}^2\right).
\end{equation*}
Finally, we estimate the commutator term. For $\ell=1$, we have
\begin{equation*}\label{qqqq10}
	\begin{split}
		\norm{[\bar\partial, E\nabla]q}_0^2 \ls \norm{\bp E}_{L^\infty}^2\norm{\nabla q}_{0}^2\ls \norm{\nabla\bp\eta}_{L^\infty}^2P\left(\norm{\eta}_{\X^3}^2\right)\ls P\left(\norm{\eta}_{\X^3}^2\right)\left(1+\norm{\eta}_{\X^3}\norm{\nabla\eta}_{\X^3}\right),		
\end{split}
\end{equation*}
Then, combining the above estimates, we arrive at
\begin{equation}
\label{q1}
	\begin{split}
	\norm{\bp q}_1^2\ls \left(\norm{\nabla\bp\eta}_{L^\infty}^2+1\right)P\left(\norm{\eta}_{\X^3}^2,\norm{\sqrt\epsilon\nabla^2\eta}_{\X^2}^2,\norm{\bp^2\pa_1\eta}_1^2,\abs{\bp\pa_1^2\eta_i\a_{i2}}_0^2\right).
\end{split}
\end{equation}
On the other hand, equation \eqref{ell} gives
\begin{equation}
\label{np}
-\partial_{22}q=\dfrac{1}{E_{22}}\left(G_1+\sum_{i+j\neq4}\partial_i(E_{ij}\partial_jq)+\partial_2E_{22}\partial_2q+\epsilon\a_{ik}\pa_{kj}\Psi_{ij}+\a_{jk}\pa_k\phi^\epsilon_j\right).
\end{equation}
This implies that we can estimate the normal derivatives of $q$ in terms of the $q$ terms with less normal derivatives. Hence, using equation \eqref{np} and estimates \eqref{pee} and \eqref{q1}, we obtain
\begin{equation}
\label{ofe}
		\begin{split}
		\norm{\pa_2^2q}_0^2&\ls  \left(\norm{G_1}_0^2+\norm{\nabla\eta}_{L^\infty}^2\norm{\bp\nabla q}_0^2+\norm{\nabla E}_{L^4}^2\norm{\pa_2q}_{0}\norm{\nabla\pa_2 q}_0+\norm{\nabla\eta}_{L^4}^2\norm{\epsilon\nabla^2\Psi+\nabla\phi^\epsilon}_{L^4}^2\right)\\&\leq P\left(\norm{\eta}_{\X^3}^2\right)\left(1+\norm{\bp q}_1^2\right)+\hal \norm{\pa_2^2q}_0^2.
\end{split}
\end{equation}
Thus, we arrive at
\begin{equation}
\label{est1p}
	\begin{split}
		\norm{\nabla q}_1^2&\ls \left(\norm{\nabla\bp\eta}_{L^\infty}^2+1\right)P\left(\norm{\eta}_{\X^3}^2,\norm{\sqrt\epsilon\nabla^2\eta}_{\X^2}^2,\norm{\bp^2\pa_1\eta}_1^2,\abs{\bp\pa_1^2\eta_i\a_{i2}}_0^2\right)\\&\ls \left(\norm{\eta}_{\X^3}\norm{\nabla\eta}_{\X^3}+1\right)P\left(\norm{\eta}_{\X^3}^2,\norm{\sqrt\epsilon\nabla^2\eta}_{\X^2}^2,\norm{\bp^2\pa_1\eta}_1^2,\abs{\bp\pa_1^2\eta_i\a_{i2}}_0^2\right).
\end{split}
\end{equation}
Integrating from $0$ to $t$, \eqref{q1} and \eqref{est1p} yield
\begin{equation}
	\int_0^t\norm{\nabla q}_{\X^1}^2\ls \sqrt TP\left(\sup_{t\in [0,T]}\mathfrak E^\epsilon(t)\right).
\end{equation}
For $\ell=2$, we obtain
\begin{equation*}
	\begin{split}
		\norm{[\bar\partial^2,E\nabla] q}_{0}^2&\ls \norm{\bp E\bp\nabla q}_0^2+\norm{\bp^2 E\nabla q}_0^2\\&\ls \left(\norm{\nabla\bp\eta}_{L^\infty}^4+\norm{\nabla\bp\eta}_{L^\infty}^2\right)P\left(\norm{\eta}_{\X^3}^2,\norm{\sqrt\epsilon\nabla^2\eta}_{\X^2}^2,\norm{\bp^2\pa_1\eta}_1^2,\abs{\bp\pa_1^2\eta_i\a_{i2}}_0^2\right)\\&\quad+\norm{\nabla\bp^2\eta}_{0}\norm{\nabla\eta}_{\X^3}\norm{q}_1\norm{\nabla q}_1.
\end{split}
\end{equation*}
Combining the estimates \eqref{est2} and \eqref{est3} and using  $\norm{f}_{L^\infty}\ls\norm{f}_{W^{1,3}}\ls \norm{f}_1+\norm{\nabla f}_0^{2/3}\norm{\nabla^2f}_0^{1/3}$, we obtain
\begin{equation}
\begin{split}
\label{q2}
&\norm{ \bar\partial^2 q}_{1}^2 \\& \ls  \Bigg(\norm{\bp^3\eta}_1^2+\norm{\bp^2\pa_tv}_0^2+1\Bigg)P\left(\norm{\eta}_{\X^3}^2,\norm{\sqrt\epsilon\nabla^2\eta}_{\X^2}^2,\norm{\bp^2\pa_1\eta}_1^2,\abs{\bp^2\pa_1^2\eta_i\a_{i2}}_0^2\right)\\&\quad+ \left(\norm{\nabla\bp\eta}_{L^\infty}^4+\norm{\nabla\bp\eta}_{L^\infty}^2\right)P\left(\norm{\eta}_{\X^3}^2,\norm{\sqrt\epsilon\nabla^2\eta}_{\X^2}^2,\norm{\bp^2\pa_1\eta}_1^2, \abs{\bp\pa_1^2\eta_i\a_{i2}}_0^2\right)\\&\quad+\norm{\nabla\bp^2\eta}_{0}\norm{\nabla\eta}_{\X^3}\norm{q}_1\norm{\nabla q}_1+\epsilon P\left(\norm{\eta}_{\X^3}^2\right)\norm{\sqrt\epsilon\bp^2\pa_1\nabla v}_0^2\\&\quad +P\left(\norm{\eta}_{\X^3}^2\right)\left(\norm{\bp^3\eta}_1^2+1\right)\norm{\nabla\eta}_{\X^3}^{2/3}\norm{\eta}_{\X^3}^{4/3}+\delta\left(\norm{\nabla\eta}_{\X^3}^2+\norm{\epsilon\nabla v}_{\X^3}^2\right)
\\&\ls \Bigg(\norm{\bp^3\eta}_1^2+\norm{\bp^2\pa_tv}_0^2+\norm{\bp^3\eta}_1^3+1\Bigg)P\left(\norm{\eta}_{\X^3}^2,\norm{\sqrt\epsilon\nabla^2\eta}_{\X^2}^2,\norm{\bp^2\pa_1\eta}_1^2,\abs{\bp^2\pa_1^2\eta_i\a_{i2}}_0^2\right)\\&\quad+\norm{\nabla\eta}_{\X^3}\norm{\nabla q}_1P\left(\norm{\eta}_{\X^3}\right)+\epsilon P\left(\norm{\eta}_{\X^3}^2\right)\norm{\sqrt\epsilon\bp^2\pa_1\nabla v}_0^2+\delta\left(\norm{\nabla\eta}_{\X^3}^2+\norm{\epsilon\nabla v}_{\X^3}^2\right),
\end{split}
\end{equation}
where Young's inequality is also employed.
Thus, for sufficient small $\epsilon$ such that $\epsilon P(M_0)\leq C\delta$, we have 
\begin{equation*}
	\int_0^t\norm{\bp^2q}_1^2\ls T^{1/4}P\left(\sup_{t\in [0,T]}\mathfrak E^\epsilon(t)\right)+\delta \sup_{t\in [0,T]}\mathfrak E^\epsilon(t) .
\end{equation*}
Finally, by using equation \eqref{np} and a similar approach as in \eqref{ofe}--\eqref{est1p}, we can obtain the estimate of $\norm{\bp\nabla^2q}_0^2, \norm{\nabla^3q}_0^2$ and prove the lemma by adjusting $0<\delta <1$.

\end{proof}

After we have the estimates for pressure, we can use the Dirichlet boundary condition \eqref{bdq} of the pressure to bound $\bp^2\pa_1^2\eta_i\a_{i\ell}N_\ell$ on $\Gamma$:
\begin{equation*}
	-\bp^2\pa_1^2\eta_i\a_{i\ell}N_\ell=\bp^2\left(\abs{\a_{\cdot 2}}^3\left(q+1-\dfrac{1+\epsilon\pa_t\a_{j2}\a_{j2}+\epsilon\Psi_{i2}\a_{i2}}{\abs{\a_{\cdot2}}^2}\right)\right)-\sum_{\abs{\alpha}<2}C_\alpha\bp^{2-\alpha}\a_{i2}\bp^\alpha\pa_1^2\eta_i
\end{equation*}
By using the trace theorem $\abs{f}_\hal \ls \norm{f}_1$, we have
\begin{equation*}
	\begin{split}
		\abs{\bp^2\pa_1^2\eta_i\a_{i\ell}N_\ell}_{\hal}&\ls \norm{\bp^2\left(\abs{\pa_1\eta}^3\left(q+1-\dfrac{1+\epsilon\pa_t\a_{j2}\a_{j2}+\epsilon\Psi_{i2}\a_{i2}}{\abs{\pa_1\eta}^2}\right)\right)-\sum_{\abs{\alpha}<2}\bp^{2-\alpha}\pa_1\eta\bp^\alpha\pa_1^2\eta}_1\\&\ls P\left(\norm{\eta}_{\X^3}\right)\left(\norm{\bp^2\pa_1\eta}_1\norm{q}_{2}+\norm{\bp q}_2+\norm{\bp^2 q}_1\right)+\norm{\bp\pa_1\eta}_2\norm{\bp^2\pa_1\eta}_1\\&\quad+P(\norm{\eta}_{\X^3})\left(\norm{\bp^2\pa_1\eta}_1\norm{\epsilon\pa_1 v}_2+\norm{\sqrt\epsilon\pa_1v}_1+\norm{\sqrt\epsilon\bp^2\pa_1 v}_1\right)
\end{split}
\end{equation*}
Thus, we arrive at
\begin{equation}
	\label{sigmae}
	\int_0^t\abs{\bp^2\pa_1^2\eta_i\a_{i\ell}N_\ell}_{\hal}^2(\tau)\,d{\mathsf t} \ls P\left(\sup_{t\in [0,T]}\mathfrak E^\epsilon(t)\right).
\end{equation}
This estimate will be used later in the high-order tangential energy estimates.

\subsection{Basic energy estimates}
We have the following basic $L^2$ energy estimates:
\begin{proposition}\label{basic}
For $t\in [0,T]$ with $T\le T_\epsilon$,
\begin{equation}\label{00estimate}
	\norm{v(t)}_0^2+\norm{\nabla\eta(t)}_0^2+2\intb \abs{\pa_1\eta}\,d\sigma+\epsilon\int_0^t\norm{\nabla v}_0^2 \ls M_0+TP\left(\sup_{t\in [0,T]}\mathfrak E^\epsilon\right).
\end{equation}
\end{proposition}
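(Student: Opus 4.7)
The plan is to perform the standard basic $L^2$ energy estimate: test the momentum equation \eqref{approximate}$_2$ against $v$ in $L^2(\Omega)$, integrate by parts term by term, and substitute the dynamic boundary condition \eqref{approximate}$_4$ into the resulting boundary sum in order to identify the surface tension contribution as a total time derivative.

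Concretely, after integration by parts the pressure term produces $\intb q(\a N)\cdot v$ (Piola's identity \eqref{polia} together with $\Div_\eta v = 0$ kills the interior part), the elastic term $-\intd\Delta\eta\cdot v$ produces $\hal\tfrac{d}{dt}\norm{\nabla\eta}_0^2 - \intb FN\cdot v$ (using $\pa_t\eta = v$), the viscous term produces the coercive dissipation $2\epsilon\intd J\abs{S_\eta(v)}^2$ together with the boundary contribution $-2\epsilon\intb (S_\eta(v)\a N)\cdot v$, and the forcing $f^\epsilon = \phi^\epsilon + \epsilon\nabla\cdot\Psi$ contributes $\intd\phi^\epsilon\cdot v$ and, after one further integration by parts, $-\epsilon\intd \Psi:\nabla v + \epsilon\intb(\Psi N)\cdot v$. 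Collecting the boundary integrals, I obtain $\intb[q\a N - FN - 2\epsilon S_\eta(v)\a N + \epsilon\Psi N]\cdot v$, and by \eqref{approximate}$_4$ this equals $-\intb\a N\cdot v - \intb\pa_1(\pa_1\eta/\abs{\pa_1\eta})\cdot v$.

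Two observations finish the boundary bookkeeping. First, the atmospheric-pressure piece $-\intb\a N\cdot v$ vanishes identically: Piola combined with $\Div_\eta v = 0$ yields $\intd \pa_j(\a_{ji}v_i) = 0$, whence $\intb\a N\cdot v = 0$. Second, the surface-tension piece, after tangential integration by parts on the torus and using $\pa_1 v = \pa_t\pa_1\eta$ together with $\pa_t\abs{\pa_1\eta} = \pa_1\eta\cdot\pa_1 v/\abs{\pa_1\eta}$, equals $\tfrac{d}{dt}\intb\abs{\pa_1\eta}$. This produces the exact energy identity
\begin{equation*}
\hal\tfrac{d}{dt}\Bigl(\norm{v}_0^2 + \norm{\nabla\eta}_0^2 + 2\intb\abs{\pa_1\eta}\Bigr) + 2\epsilon\intd J\abs{S_\eta(v)}^2 = -\intd \phi^\epsilon\cdot v + \epsilon\intd\Psi:\nabla v.
\end{equation*}

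The remaining work is routine. The right-hand side is bounded by $P(\mathfrak E^\epsilon(t))$ via Cauchy--Schwarz and the smoothed-force bound \eqref{estforpsi}; time integration together with $\mathfrak E^\epsilon(0)\leq M_0$ yields the claimed $M_0 + TP(\sup_{[0,T]}\mathfrak E^\epsilon)$ for the first three terms of \eqref{00estimate}. To convert the dissipation $\epsilon\int_0^t\intd J\abs{S_\eta(v)}^2$ into the asserted $\epsilon\int_0^t \norm{\nabla v}_0^2$, I invoke Korn's inequality \eqref{korn} and use $J\geq \hal$ from \eqref{inin3}. No step is particularly hard; the only observation of real note is the total-time-derivative structure on the boundary, a classical trick for free-surface capillary problems which here crucially exploits the vector form of \eqref{approximate}$_4$.
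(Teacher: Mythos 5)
Your proposal is correct and follows essentially the same route as the paper: testing \eqref{approximate}$_2$ with $v$, integrating by parts, substituting the dynamic boundary condition so that the capillary term becomes $\tfrac{d}{dt}\intb\abs{\pa_1\eta}$, killing $\intb \a N\cdot v$ via the Piola identity and $\Div_\eta v=0$, and concluding with Cauchy--Schwarz, \eqref{estforpsi} and Korn's inequality \eqref{korn}. The only differences are cosmetic (a factor-of-two convention in $\abs{S_\eta(v)}^2$ and the paper writing the viscous term as $2\epsilon J\pa_{\eta_j}S_\eta(v)_{ij}$ from the start).
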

\begin{proof}
Taking the $L^2(\Omega)$ inner product of equation \eqref{approximate}$_2$ with $v$ yields
\begin{equation}\label{hhl1}
	\dfrac{1}{2}\dfrac{d}{dt}\int_{\Omega}\abs{v}^2 +\int_{\Omega} J\nabla_{\eta} q\cdot v -\int_{\Omega}\Delta\eta \cdot v-2\epsilon\intd J\pa_{\eta_j}(S_\eta(v)_{ij})v_i+\intd \epsilon \pa_j\Psi_{ij}v_i = -\intd \phi^\epsilon\cdot v
\end{equation}
By integration by parts and using \eqref{approximate}$_3$ and \eqref{approximate}$_4$, we have
\begin{equation*}
\begin{split}
&\int_{\Omega} J\nabla_{\eta} q\cdot v-\int_{\Omega}\Delta \eta\cdot v-2\epsilon\intd J\pa_{\eta_j}(S_\eta(v)_{ij})v_i+\intd \epsilon \pa_j\Psi_{ij}v_i \\=&\int_{\Gamma} q \a_{i\ell}N_\ell v_i\,d\sigma-\int_{\Gamma}F_{i\ell}N_\ell v_i\,d\sigma
+\int_{\Omega}\partial_j\eta_i\partial_j v_i+2\epsilon\intd S_\eta(v)_{ij}J\pa_{\eta_j} v_i-\intb 2\epsilon S_\eta(v)_{ij}\a_{j\ell}N_\ell v_i\,d\sigma\\&-\intd \epsilon\Psi_{ij}\pa_jv_i+\intb \epsilon \Psi_{i\ell}N_\ell v_i\,d\sigma\\=&\dfrac{1}{2}\dfrac{d}{dt}\int_{\Omega}|\nabla\eta|^2-\int_{\Gamma}\partial_1\left(\dfrac{\partial_1\eta_i}{\abs{\pa_1\eta}}\right)v_i\,d\sigma-\intb \a_{i\ell}N_\ell v_i\, d\sigma+\epsilon\intd J\abs{S_\eta(v)}^2-\intd \epsilon\Psi_{ij}\pa_jv_i\\
=&\dfrac{1}{2}\dfrac{d}{dt}\int_{\Omega}|\nabla\eta|^2+\int_{\Gamma}\dfrac{\partial_1\eta_i\pa_1v_i}{\abs{\pa_1\eta}}+\epsilon\intd J\abs{S_\eta(v)}^2-\intd \epsilon\Psi_{ij}\pa_jv_i\\=&\dfrac{1}{2}\dfrac{d}{dt}\int_{\Omega}|\nabla\eta|^2+\dfrac{d}{dt}\int_{\Gamma} \abs{\pa_1\eta}\,d\sigma+\epsilon\intd J\abs{S_\eta(v)}^2-\intd \epsilon\Psi_{ij}\pa_jv_i
\end{split}
\end{equation*}
Thus, we arrive at 
\begin{equation*}
	\begin{split}
		&\frac{1}{2}\dfrac{d}{dt}\left(\int_{\Omega}\abs{v}^2+\abs{\nabla\eta}^2 +2\int_{\Gamma} \abs{\pa_1\eta}\,d\sigma\right)+\epsilon\intd J\abs{S_\eta(v)}^2\ls \norm{\phi^\epsilon}_0\norm{v}_0+\norm{\sqrt\epsilon\Psi}_0\norm{\sqrt\epsilon \nabla v}_0.
\end{split}
\end{equation*}
Integrating directly over time in the above inequality and using Korn's inequality \eqref{korn} yields \eqref{00estimate}.
\end{proof}

%

\subsection{High-order tangential energy estimates}\label{tan}
We now derive the high-order tangential energy estimates. These estimates contain two parts: one part addresses the case in which the tangential derivatives have at least one $\pa_1$ derivative, and the other part addresses the case in which all tangetial derivatives are $\pa_t$ derivatives. The reason for doing this is mainly that there are no, estimates $\pa_t^3q$ in Proposition \ref{press2} and hence estimate \eqref{sigmae} cannot be applied to the $\pa_t^3$-energy estimates; more details can be found in \cite{Wang_15}.
\subsubsection{$\bp^2\pa_1$ energy estimates}
\begin{proposition}\label{tane1}
For $t\in [0,T]$ with $T\le T_\epsilon$, for $\epsilon$ sufficiently small, it holds that
\begin{equation}
\label{teee}
\begin{split}
	\norm{\bar\partial^2\pa_1 v(t)}_0^2+\norm{\bar\partial^2\pa_1\nabla\eta(t)}_0^2+\dfrac{1}{2}\abs{\pa_1^2\bar\partial^2\eta_i \a_{i2}(t)}_0^2+\epsilon\int_0^t\norm{\nabla\bp^2\pa_1 v}_0^2\,d{\mathsf t}\\\leq M_0+\delta\sup_{t\in[0,T]}\mathfrak{E}^{\epsilon}(t)+T^{\frac{1}{4}} P\left(\sup_{t\in[0,T]}\mathfrak{E}^{\epsilon}(t)\right).
\end{split}
\end{equation}
\end{proposition}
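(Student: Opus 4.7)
The plan is to apply the third-order tangential operator $\bp^2\pa_1$ to the momentum equation \eqref{approximate}$_2$, test against $\bp^2\pa_1 v$ in $L^2(\Omega)$, and integrate by parts to produce an energy identity of the form
\[
\tfrac{1}{2}\frac{d}{dt}\bigl(\norm{\bp^2\pa_1 v}_0^2+\norm{\bp^2\pa_1\nabla\eta}_0^2\bigr)+\epsilon\intd J\abs{\bp^2\pa_1 S_\eta(v)}^2\,dx+\mathcal I_\Gamma=\mathcal R,
\]
where $\mathcal R$ collects the interior commutators arising from $[\bp^2\pa_1,\a]\nabla q$, $[\bp^2\pa_1,J\a^T\a]\nabla v$, and the forcing $f^\epsilon$, while
\[
\mathcal I_\Gamma=\intb\bigl(\bp^2\pa_1 q\,\a N-\bp^2\pa_1 FN-2\epsilon\,\bp^2\pa_1 S_\eta(v)\,\a N+\epsilon\,\bp^2\pa_1\Psi\,N\bigr)\cdot\bp^2\pa_1 v\,d\sigma
\]
is the accumulated boundary flux. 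The Piola identity \eqref{polia} together with $J=J_0^\epsilon(x)$ ensures that the interior boundary contribution from $J\nabla_\eta q$ collapses to the stated $q\,\a N$ term, while Korn's inequality \eqref{korn} upgrades the dissipation to $\epsilon\norm{\nabla\bp^2\pa_1 v}_0^2$ modulo lower-order terms.

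Next, substitute the dynamic boundary condition \eqref{approximate}$_4$ into $\mathcal I_\Gamma$ and decompose exactly as in \eqref{fea}, with $\mathfrak B=q+1+\pa_1^2\eta_k\a_{k\ell}N_\ell/\abs{\pa_1\eta}^3$. Up to symmetric commutators $[\bp^2\pa_1,\mathfrak B,\a_{i\ell}N_\ell]\bp^2\pa_1 v_i$ bounded by Lemma 4.1, $\mathcal I_\Gamma$ splits into a surface-tension piece, an elastic piece and an $O(\epsilon)$ viscous boundary remainder. The surface-tension contribution, after one integration by parts in $\pa_1$ and use of the tangential identity $Hn=\pa_\tau\tau$ derived in the introduction (recall $\a_{\cdot 2}=\pa_1\eta^\perp$), reorganizes into the total time derivative
\[
\mathrm{ST}=\tfrac{1}{2}\frac{d}{dt}\intb\dfrac{\abs{\bp^2\pa_1^2\eta\cdot n}^2}{\abs{\pa_1\eta}}\,d\sigma+\text{l.o.t.},
\]
which generates the surface-regularity term $\abs{\pa_1^2\bp^2\eta_i\a_{i2}}_0^2$ on the left side of \eqref{teee}.

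The main obstacle is the elastic boundary term $\mathrm{ET}=-\intb\mathfrak B\,\bp^2\pa_1(\a N)\cdot\bp^2\pa_1 v\,d\sigma$: a naive duality bound costs half a derivative on $v$. The remedy is Wu's symmetrization. Using $\bp^2\pa_1 v=\pa_t\bp^2\pa_1\eta$, pull a $d/dt$ outside, then apply the antisymmetric identity \eqref{anti} so that the resulting transposed integral $\intb\mathfrak B\,\bp^2\pa_1 v^\perp\cdot\bp^2\pa_1\pa_1\eta$ is recognized as $-\mathrm{ET}$. This forces the symmetric form
\[
\mathrm{ET}=-\tfrac{1}{2}\frac{d}{dt}\intb\mathfrak B\,\bp^2\pa_1(\a_{i\ell}N_\ell)\,\bp^2\pa_1\eta_i\,d\sigma+\text{l.o.t.},
\]
whose time-integrated form is controlled, via the trace estimate \eqref{tre}, by $\norm{\bp^2\pa_1\nabla\eta}_0^2$ and $\abs{\bp^2\pa_1^2\eta\cdot n}_0^2$, both of which sit on the left of \eqref{teee}. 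The viscous boundary piece is split by distributing one $\sqrt\epsilon$ to the dissipation and one to $\norm{\sqrt\epsilon\nabla^2\eta}_{\X^2}$, exploiting the inherent identity \eqref{pa1} on $\Gamma$ to avoid any loss of derivative.

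Finally, assemble everything and integrate in time from $0$ to $t$. The interior commutators in $\mathcal R$ are bounded through Lemma 4.1 with standard 2D Sobolev embeddings and Ladyzhenskaya/interpolation inequalities; the pressure-dependent pieces are absorbed via Proposition \ref{pressure}; the normal-trace bound \eqref{sigmae} is invoked whenever $\bp^2\pa_1^2\eta\cdot\a N$ appears in a half-order norm; and the bound \eqref{estforpsi} handles the $\phi^\epsilon$ and $\Psi$ contributions. Every borderline top-order quantity is split by Young's inequality into a $\delta\sup\mathfrak E^\epsilon$ piece and a lower-order residual, and Hölder in time produces the declared $T^{1/4}P(\sup\mathfrak E^\epsilon)$ factor. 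Combined with $\mathfrak E^\epsilon(0)\le M_0$, this yields \eqref{teee}.
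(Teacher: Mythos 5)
Your proposal follows essentially the same route as the paper's proof: the $\bp^2\pa_1$ energy identity, substitution of the dynamic boundary condition through $\mathfrak B$, Wu-type symmetrization of the elastic boundary term via the antisymmetric identity \eqref{anti}/\eqref{in2}, extraction of the surface-tension term $\tfrac12\frac{d}{dt}\intb\abs{\pa_1\eta}^{-1}\abs{\bp^2\pa_1^2\eta\cdot n}^2$, and closure using the pressure estimate \eqref{press2}, the bound \eqref{sigmae}, Korn's inequality, \eqref{estforpsi}, and $\delta$-absorption with a $T^{1/4}$ factor. The only caveat is that the pieces you label ``l.o.t.'' (the analogues of $R_{a12}$, $R_{a13}$, $R_{a111}$ and the residual $\mathfrak G$) still require the time-integration-by-parts and trace/duality arguments the paper carries out, but your outline correctly anticipates these tools.
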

\begin{proof}
Applying $\bp^2\pa_1$ to equation \eqref{approximate}$_{2}$ and then taking the $L^2(\Omega)$ inner product of equation \eqref{eqValpha} with $\bp^2\pa_1v$ yields
\begin{equation*}
	\begin{split}
	\dfrac{1}{2}\dfrac{d}{dt}\int_{\Omega}\abs{\bp^2\pa_1 v}^2+\int_{\Omega} J\nabla_\eta \bp^2\pa_1q \cdot \bp^2\pa_1 v-\int_{\Omega} \Delta\bp^2\pa_1\eta\cdot \bp^2\pa_1v-2\epsilon\intd \pa_k(\bp^2\pa_1(S_\eta(v)_{ij}\a_{jk}))\bp^2\pa_1v_i\\+\intd\epsilon\pa_k(\bp^2\pa_1\Psi_{ik})\bp^2\pa_1 v_i+\intd\bp^2\pa_1\phi^\epsilon_i\bp^2\pa_1v_i=\int_{\Omega} \left[\bp^2\pa_1,\a_{ij}\right] \pa_j q \bp^2\pa_1 v_i.
 \end{split}
\end{equation*}
By the estimates \eqref{co1} and \eqref{press2}, we have
\begin{equation}
\label{ggg0}
\begin{split}
	\int_0^t\intd\left[\bp^2\pa_1,\a_{ij}\right]\pa_j q \bp^2\pa_1 v_i &\ls\int_0^t\norm{\left[\bp^2\pa_1,\a_{ij}\right]\pa_j q}_0\norm{\bp^2\pa_1 v}_0\\&\ls \int_0^t \left(P\left(\norm{\bp^2\pa_1\nabla\eta}_0, \norm{\eta}_{\X^3}\right)+\norm{\nabla\eta}_{\X^3}^\hal\right)\norm{\nabla q}_{\X^2}\norm{\bp^2\pa_1v}_0\\&\ls \delta \sup_{t\in [0,T]} \mathfrak E^{\epsilon}(t)+ T^{\frac{1}{4}}P\left(\sup_{t\in [0,T]} \mathfrak E^{\epsilon}(t)\right).
\end{split}
\end{equation}
Next, by the fact that $v=\partial_t\eta$, we have
\begin{equation*}
\begin{split}
&\int_{\Omega} J\nabla_\eta \bp^2\pa_1q \cdot \bp^2\pa_1 v-\int_{\Omega} \Delta\bp^2\pa_1\eta\cdot \bp^2\pa_1v-2\epsilon\intd \pa_k(\bp^2\pa_1(S_\eta(v)_{ij}\a_{jk}))\bp^2\pa_1v_i\\&\quad+\intd\epsilon\pa_k(\bp^2\pa_1\Psi_{ik})\bp^2\pa_1 v_i\\= &\int_{\Omega} \bp^2\pa_1q \a_{ik}\pa_k(\bp^2\pa_1v_i)+\int_{\Gamma} \bp^2\pa_1q \a_{i\ell}N_\ell\bp^2\pa_1v_i\,d\sigma+\int_{\Omega}  \bp^2\pa_1\partial_j\eta_i \partial_j\bp^2\pa_1v_i \\&\quad-\int_{\Gamma}\partial_\ell\bp^2\pa_1\eta_i N_\ell\bp^2\pa_1v_i\,d\sigma+\intd \bp^2\pa_1(2\epsilon S_\eta(v)_{ij}\a_{jk}-\epsilon\Psi_{ik})\pa_{k}\bp^2\pa_1v_i\\&\quad-\intb \bp^2\pa_1(2\epsilon S_\eta(v)_{ij}\a_{j\ell}-\epsilon\Psi_{i\ell})N_\ell\bp^2\pa_1v_i\,d\sigma\\=&\dfrac{1}{2}\dfrac{d}{dt}\int_{\Omega}\abs{\nabla\bp^2\pa_1\eta}^2+\underbrace{\int_{\Gamma}\left(\bp^2\pa_1q \a_{i\ell}-\bp^2\pa_1(F_{i\ell}-2\epsilon (S_\eta(v)\a)_{i\ell}+\epsilon\Psi_{i\ell})\right)N_\ell\bp^2\pa_1v_i\,d\sigma}_{R_a}\\&\quad+\underbrace{\intd \bp^2\pa_1q \a_{ik}\pa_k\bp^2\pa_1v_i}_{R_b}+\underbrace{\intd 2\epsilon\bp^2\pa_1(S_\eta(v)_{ij}\a_{jk})\pa_k \bp^2\pa_1v_i}_{R_c}\underbrace{-\intd \epsilon \bp^2\pa_1\Psi_{ik}\pa_k\bp^2\pa_1v_i}_{R_d}
\end{split}
\end{equation*}
For $R_a$, we rewrite the boundary condition \eqref{approximate}$_4$ as
\begin{equation}
\label{tb}
	\left(q+1+\frac{\pa_1^2\eta_k\a_{k\ell}N_\ell}{\abs{\pa_1\eta}^3}\right)\a N= FN+2\epsilon S_\eta(v)\a N-\epsilon \Psi N,
\end{equation}
by calculating the $\pa_1\left(\dfrac{\pa_1\eta}{\abs{\pa_1\eta}}\right)$ directly with $\a_{\cdot 2}=(-\partial_1\eta_2, \partial_1\eta_1)^\text{T}$.

Denoting $\mathfrak B:=q+1+\frac{\pa_1^2\eta_k\a_{k\ell}N_\ell}{\abs{\pa_1\eta}^3}$, we have
\begin{equation}
\label{et2}
\begin{split}
	R_{a}=&\underbrace{-\int_{\Gamma}\bp^2\pa_1\left(\dfrac{\partial_1^2\eta_k\a_{k\ell}N_\ell}{\abs{\pa_1\eta}^3}\right)(\a N)_{i}\bp^2\pa_1v_i\,d\sigma}_{R_{a1}} -\underbrace{\int_{\Gamma}\mathfrak B\bp^2\pa_1(\a_{i\ell}N_\ell)\bp^2\pa_1v_i\,d\sigma}_{R_{a2}}\\&\underbrace{-\intb\left[\bp^2\pa_1, \mathfrak B, \a_{i\ell}N_\ell\right]\bp^2\pa_1v_i\,d\sigma}_{R_{a3}}.
\end{split}
\end{equation}
Substituting the boundary condition \eqref{bdq} into $R_{a3}$, we have
\begin{equation*}
	R_{a3}=-\intb \left[\bp^2\pa_1,\dfrac{1+\epsilon\pa_t\a_{j2}\a_{j2}+\epsilon\Psi_{j2}\a_{j2}}{\abs{\pa_1\eta}^2}, \a_{i\ell}N_\ell\right]\bp^2\pa_1v_i\,d\sigma.
\end{equation*}
Thus, with the help of \eqref{co2} and \eqref{peadv}, we obtain
\begin{equation}
	\label{grgr}
\begin{split}
	\int_0^t R_{a3}\,d{\mathsf t} \ls &\int_0^t\norm{\left[\bp^2\pa_1,\dfrac{1+\epsilon\pa_t\a_{j2}\a_{j2}+\epsilon\Psi_{j2}\a_{j2}}{\abs{\pa_1\eta}^2}, \a_{i\ell}N_\ell\right]}_1\abs{\bp^2\pa_1v_i}_{-1/2}\\\ls & \int_0^t\left(\norm{\nabla\eta}_{\X^3}+\norm{\epsilon\nabla v}_{\X^3}\right)\left(\norm{\bp^3\eta}_1+1\right)P\left(\norm{\bp^2\pa_1\eta}_1,\norm{\eta}_{\X^3},\norm{\sqrt\epsilon\nabla^2\eta}_{\X^2}\right)
	\\&+\int_0^t \norm{\sqrt\epsilon\bp^2\pa_1 v}_1\norm{\sqrt\epsilon\nabla^2\eta}_{\X^2}\left(\norm{\bp^3\eta}_1+\norm{\bp^2\pa_1\eta}_1\norm{\eta}_{\X^3}^2\right)\\\ls& T^{\frac{1}{4}}P\left(\sup_{t\in[0,T]}\mathfrak E^\epsilon(t)\right).
\end{split}
\end{equation}
For $R_{a2}$, we need the following symmetric structure. First, by integration by parts with $t$ and $x_1$, we have
\begin{equation}
\label{pol}
\begin{split}
&\intb\mathfrak B\bp^2\pa_1\a_{i\ell}N_\ell\bp^2\pa_1v_i\,d\sigma\\=&\dfrac{d}{dt}\intb\mathfrak B\bp^2\pa_1\a_{i\ell}N_\ell\bp^2\pa_1\eta_i\,d\sigma-\intb\partial_t\mathfrak B\bp^2\pa_1\a_{i\ell}N_\ell\bp^2\pa_1\eta_i\,d\sigma-\intb\mathfrak B\pa_t\bp^2\pa_1\a_{i\ell}N_\ell\bp^2\pa_1\eta_i\,d\sigma\\=&\dfrac{d}{dt}\intb\mathfrak B\bp^2\pa_1\a_{i\ell}N_\ell\bp^2\pa_1\eta_i\,d\sigma-\intb\pa_t\mathfrak B\bp^2\pa_1\a_{i\ell}N_\ell\bp^2\pa_1\eta_i\,d\sigma+\intb \pa_1\mathfrak B\bp^2\pa_t\a_{i\ell}N_\ell\bp^2\pa_1\eta_i\,d\sigma\\&+\intb\mathfrak B\bp^2\pa_t\a_{i\ell}N_\ell\bp^2\pa_1^2\eta_i\,d\sigma
\end{split}
\end{equation}
With equality \eqref{in2}, we see that
\begin{equation}
\label{pol2}
\intb\mathfrak B\bp^2\pa_t\a_{i\ell}N_\ell\bp^2\pa_1^2\eta_i\,d\sigma=-\intb \mathfrak B \bp^2\pa_t\pa_1\eta_i \bp^2\pa_1\a_{i\ell}N_\ell\,d\sigma.
\end{equation}
Then, we arrive at 
\begin{equation}
	\label{44}
\begin{split}
	R_{a2}=&\dfrac{1}{2}\dfrac{d}{dt}\int_{\Gamma}\mathfrak B\bp^2\pa_1(\a_{i\ell}N_\ell)\bp^2\pa_1\eta_i\,d\sigma\\&\underbrace{-\dfrac{1}{2}\int_{\Gamma}\pa_t\mathfrak B\bp^2\pa_1(\a_{i\ell}N_\ell)\bp^2\pa_1\eta_i\,d\sigma-\hal\intb\pa_1\mathfrak B\bp^2(\a_{i\ell}N_\ell)\bp^2\pa_1v_i\,d\sigma}_{R_{a21}}.
\end{split}
\end{equation}
Furthermore, we have
\begin{equation*}
	\begin{split}
		\int_0^t \abs{R_{a21}}\,d{\mathsf t} &\ls \int_0^t\left(\norm{\bp^2\pa_1\eta}_1+\norm{\bp^2\pa_t\eta}_1\right)P\left(\norm{\bp^2\pa_1\eta}_1,\norm{\eta}_{\X^3},\norm{\sqrt\epsilon\nabla^2\eta}_{\X^2}\right)\left(1+\norm{\epsilon\nabla v}_{\X^3}\right)\\&\ls T^{\frac{1}{4}}P\left(\sup_{t\in[0,T]}\mathfrak E^\epsilon(t)\right).
	\end{split}
\end{equation*}
Now, we turn to the estimate of $R_{a1}$. 
\begin{equation}\label{444}
\begin{split}
R_{a1}=&\underbrace{-\intb \dfrac{1}{\abs{\pa_1\eta}^3}\bp^2\pa_1^3\eta_k\a_{k2}\a_{i2}\bp^2\pa_1v_i\,d\sigma}_{R_{a11}}\underbrace{-\intb \bp^2\pa_1\left(\dfrac{\a_{k2}}{\abs{\pa_1\eta}^3}\right)\pa_1^2\eta_k\a_{i2}\bp^2\pa_1v_i\,d\sigma}_{R_{a12}}\\&\underbrace{-\intb\left[\bp^2\pa_1,\pa_1^2\eta_k,\dfrac{\a_{k2}}{\abs{\pa_1\eta}^3}\right]\a_{i2}\bp^2\pa_1v_i\,d\sigma}_{R_{a13}}
\end{split}
\end{equation}
For $R_{a12}$, we have
\begin{equation*}
	R_{a12}=\intb \bp^2\pa_1^2\eta\dfrac{\pa_1^2\eta}{\abs{\pa_1\eta}^3} \a_{i2}\bp^2\pa_1v_i\,d\sigma+R_L
\end{equation*}
and $\int_0^t R_L\,d{\mathsf t}$ can be bounded by $T^{\frac{1}{4}}P\left(\sup\limits_{t\in[0,T]}\mathfrak E^\epsilon(t)\right)$ in a similar way as \eqref{grgr}. For the remaining term, we have
\begin{equation*}
\begin{split}
&\intb \bp^2\pa_1^2\eta\dfrac{\pa_1^2\eta}{\abs{\pa_1\eta}^3}\a_{i2}\bp^2\pa_1v_i\,d\sigma\\=&\underbrace{\dfrac{d}{dt} \intb \bp^2\pa_1^2\eta\dfrac{\pa_1^2\eta}{\abs{\pa_1\eta}^3}\a_{i2}\bp^2\pa_1\eta_i\,d\sigma}_{R_{a121}}\\&\underbrace{+\intb \bp^2\pa_1v\pa_1\left(\dfrac{\pa_1^2\eta\a_{i2}}{\abs{\pa_1\eta}^3}\right)\bp^2\pa_1\eta_i\,d\sigma-\intb \bp^2\pa_1^2\eta\pa_t\left(\dfrac{\pa_1^2\eta\a_{i2}}{\abs{\pa_1\eta}^3}\right)\bp^2\pa_1\eta_i\,d\sigma}_{R_{a122}}\\&\quad \underbrace{+\intb \bp^2\pa_1v\dfrac{\pa_1^2\eta}{\abs{\pa_1\eta}^3}\a_{i2}\bp^2\pa_1^2\eta_i\,d\sigma}_{R_{a123}}.
\end{split}
\end{equation*}
With the estimate \eqref{peadv} and \eqref{sigmae}, we obtain
\begin{equation}
	\label{l1}
	\int_0^t R_{a123}\,d{\mathsf t} \ls \int_0^t \abs{\bp^2\pa_1v}_{-\hal}\abs{\bp^2\pa_1^2\eta_i\a_{i2}}_\hal\abs{\dfrac{\pa_1^2\eta}{\abs{\pa_1\eta}^3}}_1\ls T^{\frac{1}{4}}P\left(\sup_{t\in[0,T]}\mathfrak E^\epsilon(t)\right). 
\end{equation}
For $R_{a122}$, we have
\begin{equation}
        \label{est111}
	\begin{split}
	\int_0^t R_{a122}\,d{\mathsf t}&\ls \int_0^t\abs{\pa_1\bp^3\eta}_{-\hal}\abs{\pa_1^2\bp\eta}_{\hal}\abs{\bp^2\pa_1\eta_i\a_{i2}}_{1}+\int_0^t\abs{\pa_1\bp^3\eta}_{-\hal}\abs{\pa_1^2\eta\pa_1\bp\eta}_1\abs{\bp^2\pa_1\eta}_{\hal}\\&\ls T^{\frac{1}{4}}P\left(\sup_{t\in[0,T]}\mathfrak E^\epsilon(t)\right).
\end{split}
\end{equation}
For $R_{a121}$, by integration by parts, the fundamental theorem of calculus, the estimate \eqref{est111}, the interpolation inequality, the trace estimate \eqref{tre} and Young's inequality, we have
\begin{equation}
	\label{l3}
\begin{split}
        \int_0^t R_{a121}\, d{\mathsf t} \ls&  \abs{\intb \bp^2\pa_1^2\eta\dfrac{\pa_1^2\eta}{\abs{\pa_1\eta}^3}\a_{i2}\bp^2\pa_1\eta_i(t)} +M_0\\\ls &M_0+\abs{\bp^2\pa_1\eta}_{L^4}\abs{\dfrac{\pa_1^2\eta}{\abs{\pa_1\eta}^3}}_{L^4}\abs{\bp^2\pa_1^2\eta_i\a_{i2}}_0(t)+\abs{\intb \bp^2\pa_1\eta\pa_1\dfrac{\pa_1^2\eta}{\abs{\pa_1\eta}^3}\bp^2\pa_1\eta_i\a_{i2}}(t)
                                          \\&+\abs{\bp^2\pa_1\eta}_{0}\abs{\bp^2\pa_1\eta}_{L^4}\abs{\dfrac{\pa_1^2\eta}{\abs{\pa_1\eta}^3}\pa_1\a_{i2}}_{L^4}(t)\\\ls &M_0+\abs{\bp^2\pa_1\eta}_0^{\frac{1}{4}}\abs{\bp^2\pa_1\eta}_{L^6}^{\frac{3}{4}}P\left(\norm{\eta}_{\X^3}\right)\abs{\bp^2\pa_1^2\eta_i\a_{i2}}_0(t)\\&+\abs{\intb \bp^2\pa_1\eta(0) \pa_1\dfrac{\pa_1^2\eta}{\abs{\pa_1\eta}^3}\bp^2\pa_1\eta_i\a_{i2}(t)}+\abs{\intb \int_0^t\bp^2\pa_1v(\tau)\,d\tau \pa_1\dfrac{\pa_1^2\eta}{\abs{\pa_1\eta}^3}\bp^2\pa_1\eta\a_{i2}(t)}\\&+\left(\abs{\bp^2\pa_1\eta}_0^2\abs{\pa_1^2\eta\bp^2\pa_1\eta}_0(t)+\abs{\bp^2\pa_1\eta}_0\abs{\bp^2\pa_1\eta}_{L^4}\right)P(\norm{\eta}_{\X^3})(t)\\\leq &M_0+\delta\left(\abs{\bp^2\pa_1^2\eta\cdot n}_0^2+\norm{\bp^2\pa_1\eta}_1^2\right)+C_\delta P\left(\norm{\eta}^2_{\X^3}\right).\\\leq &M_0+T^{\frac{1}{4}}P\left(\sup_{t\in[0,T]}\mathfrak E^\epsilon(t)\right)+\delta\sup_{t\in[0,T]}\mathfrak E^\epsilon(t).
\end{split}
\end{equation}
Thus, combining the estimate \eqref{l1} and \eqref{l3}, we arrive at 
\begin{equation}\label{e01}
\int_0^t R_{a12}\,d{\mathsf t} \ls M_0+\delta\sup_{t\in [0,T]}\mathfrak E^\epsilon(t)+T^{\frac{1}{4}}P\left(\sup_{t\in[0,T]}\mathfrak E^\epsilon(t)\right).
\end{equation}
The estimate for $R_{a13}$ can be obtained by a similar approach for $R_{a12}$:
\begin{equation}
	\label{e02}
	\int_0^t R_{a13}\,d{\mathsf t} \ls M_0+\delta\sup_{t\in [0,T]}\mathfrak E^\epsilon(t)+T^{\frac{1}{4}}P\left(\sup_{t\in[0,T]}\mathfrak E^\epsilon(t)\right). 
\end{equation}
Returning to the estimate of $R_{a11}$, by integration by parts with $x_1$, we have
\begin{equation}\label{e03}
\begin{split}
R_{a11}=&\intb \dfrac{1}{\abs{\pa_1\eta}^3}\bp^2\pa_1^2\eta_k\a_{k2}\a_{i2}\bp^2\pa_1^2v_i+\intb \bp^2\pa_1^2\eta_k\pa_1\left(\dfrac{\a_{k2}\a_{i2}}{\abs{\pa_1\eta}^3}\right)\bp^2\pa_1v_i
\\=&\hal \dfrac{d}{dt}\intb \dfrac{1}{\abs{\pa_1\eta}}\abs{\bp^2\pa_1^2\eta\cdot n}^2\,d\sigma\underbrace{+\intb \bp^2\pa_1^2\eta_k\pa_1\left(\dfrac{\a_{k2}\a_{i2}}{\abs{\pa_1\eta}^3}\right)\bp^2\pa_1v_i}_{R_{a111}}\\&-\hal\intb \pa_t\left( \dfrac{1}{\abs{\pa_1\eta}^3}\right)\abs{\bp^2\pa_1^2\eta_k\a_{k2}}^2-\intb\bp^2\pa_1^2\eta_k\dfrac{\a_{k2}}{\abs{\pa_1\eta}^3}\bp^2\pa_1^2\eta_j\pa_t\a_{j2}.
\end{split}
\end{equation}
The first term of the last line of \eqref{e03} is bounded by $P\left(\norm{\bp^2\pa_1\eta}_1,\abs{\bp^2\pa_1^2\eta\cdot n}_0,\norm{\eta}_{\X^3}\right)$. The second term is bounded by
\begin{equation*}
	\abs{\bp^2\pa_1^2\eta_k\a_{k2}}_\hal\norm{\bp^2\pa_1\eta}_{1}P\left(\norm{\bp^2\pa_1\eta}_1,\norm{\eta}_{\X^3}\right)
\end{equation*}
with the help of \eqref{peadv} and the trace theorem. Then, with the estimate \eqref{sigmae}, we have 
\begin{equation*}
	\int_0^t \intb\dfrac{\bp^2\pa_1^2\eta_k\a_{k2}}{\abs{\pa_1\eta}^3}\bp^2\pa_1^2\eta_j\pa_t\a_{j2} \ls \sqrt TP\left(\sup_{t\in[0,T]}\mathfrak E^\epsilon(t)\right).
\end{equation*}
The term $\int_0^t R_{a111}\,d{\mathsf t}$ can be bounded by $M_0+T^{\frac{1}{4}}P\left(\sup_{t\in[0,T]}\mathfrak E^\epsilon(t)\right) $ with a similar approach as that used for $R_{a12}$.

For $R_b$, by using the pressure estimate \eqref{press2}, we have
\begin{equation}\label{e3}
	\int_0^t R_b\,d{\mathsf t}\ls M_0+\delta \sup_{t\in[0,T]}\mathfrak E^\epsilon(t)+T^{1/4}P\left(\sup_{t\in[0,T]}\mathfrak E^\epsilon(t)\right).
\end{equation}

For $R_c$, we have
\begin{equation}
	\label{e4}
\begin{split}
R_c=&\intd \epsilon J\abs{S_\eta(\bp^2\pa_1v)}^2+P\left(\norm{\sqrt\epsilon\nabla^2\eta}_{\X^2},\norm{\eta}_{\X^3},\norm{\bp^2\pa_1\eta}_1\right)(1+\norm{\bp^3\eta}_1)\norm{\sqrt\epsilon\nabla\bp^2\pa_1v}_0\\&+P(\norm{\eta}_{\X^3})\norm{\sqrt\epsilon\bp^2\nabla\eta}_{1}\norm{\bp^2\nabla\eta}_{L^4}\norm{\sqrt\epsilon\nabla\bp^2\pa_1v}_0,
\end{split}
\end{equation}
and for $R_d$, we have
\begin{equation}
	R_d \ls \norm{\sqrt\epsilon\nabla\Psi}_{\X^2}\norm{\sqrt\epsilon\bp^2\pa_1\nabla v}_0.
\end{equation}

Last, noticing that $\phi^\epsilon_i$ depends only on $x$, we have
\begin{equation*}
	\intd \bp^2\pa_1\phi_i^\epsilon \bp^2\pa_1v_i=\dfrac{d}{dt}\intd \bp^2\phi_i^\epsilon \bp^2\pa_1^2\eta_i
\end{equation*}
by using integration by parts with respect to $x_1$. Then we arrive at
\begin{equation}
	\label{e5}
	\int_0^T \intd \bp^2\pa_1\phi_i^\epsilon \bp^2\pa_1v_i \leq M_0+\delta \norm{\bp^2\pa_1^2\eta}_0^2
\end{equation}
Combining the estimates \eqref{ggg0}--\eqref{grgr}, \eqref{44}--\eqref{444}, and \eqref{e01}--\eqref{e5}, and using Korn's inequality and the interpolation inequality, we have
\begin{equation}\label{ohoh1}
\begin{split}
&\norm{\bp^2\pa_1v(t)}_0^2+\norm{\bar\partial^2\pa_1 \nabla \eta(t)}_0^2+\abs{\bp^2\pa_1^2\eta\cdot n(t)}^2_{0}+\epsilon\int_0^t\norm{\bp^2\pa_1\nabla v}_0^2+\mathfrak G(t)\\&\leq M_0+\delta\sup_{t\in[0,T]} \mathfrak{E}^{\epsilon}(t)+ T^{\frac{1}{4}}P\left(\sup_{t\in[0,T]} \mathfrak{E}^{\epsilon}(t)\right),
\end{split}
\end{equation}
where
\begin{equation*}
\begin{split}
	\mathfrak G:=&-\hal\int_{\Gamma}\mathfrak B\bp^2\pa_1\a_{i\ell}N_\ell\bp^2\pa_1\eta_i\,d\sigma.
\end{split}
\end{equation*}
To estimate $\mathfrak G$, recalling the identity \eqref{decomp} and using equalities \eqref{in3} and \eqref{in4}, we have 
\begin{equation}
\label{sufc}
\begin{split}
	\mathfrak G&=-\hal \intb \mathfrak B\bp^2\pa_1\eta_i(n_in_j+\tau_i\tau_j)\bp^2\pa_1\a_{j\ell}N_\ell\,d\sigma\\&= -\hal\intb \mathfrak B\bp^2\pa_1\eta\cdot n \bp^2\pa_1^2\eta\cdot\tau\,d\sigma+\hal\intb \mathfrak B\bp^2\pa_1\eta\cdot\tau\bp^2\pa_1^2\eta\cdot n\,d\sigma\\&=\hal\intb \mathfrak B\bp^2\pa_1^2\eta\cdot n\bp^2\pa_1\eta\cdot\tau\,d\sigma+\hal\intb \mathfrak B\bp^2\pa_1\eta\cdot\tau\bp^2\pa_1^2\eta\cdot n\,d\sigma\\&\quad-\hal \intb \pa_1\left(\mathfrak B\tau_in_j\right)\bp^2\pa_1\eta_i\bp^2\pa_1\eta_j\,d\sigma. 
\end{split}
\end{equation}
Then, by the trace estimate \eqref{tre} and the fundamental theorem of calculus, we arrive at
\begin{equation*}
\begin{split}
\abs{\mathfrak G}&\ls \abs{\bp^2\pa_1^2\eta\cdot n}_0\abs{\mathfrak B}_1\abs{\bp^2\pa_1\eta}_0+\left(\abs{\pa_1(\mathfrak B\tau_in_j)}_{L^4}\right)\abs{\bp^2\pa_1\eta}_0\abs{\bp^2\pa_1\eta}_{L^4}\\&\ls \delta\left(\abs{\bp^2\pa_1^2\eta\cdot n}_0^2+\norm{\bp^2\pa_1\nabla\eta}_0^2\right)+C_{\delta}P\left(\norm{\eta}_{\X^3}^2,\norm{\epsilon\pa_1^2\nabla v}_0^2,\norm{\epsilon\Psi}_{2}^2\right)\\&\leq M_0+ T^{\frac{1}{4}}P\left(\sup_{t\in[0,T]}\mathfrak E^\epsilon(t)\right)+\delta\sup_{t\in[0,T]}\mathfrak E^\epsilon(t)
\end{split}
\end{equation*}
for $\epsilon$ sufficiently small;
thus, the proposition is proved.
\end{proof}
\subsubsection{$\pa_t^3$ energy estimates}
Due to the lack of the estimates $\norm{\pa_t^3 q}_0$ and $\abs{\pa_t^3\pa_1\eta\cdot n}_\hal$, some approaches to estimating $\bp^2\pa_1$ derivatives are not applicable for the $\pa_t^3$ problem, such as the estimate for \eqref{e03}. To overcome this difficulty, we adopt the idea in \cite{GuW_2016} to use Alinhac's good unknowns
\begin{equation}
\label{gun}
\mathcal{V}=\pa_t^3 v- \pa_t^3\eta\cdot\nabla_\eta v,\quad \mathcal{Q}=\pa_t^3 q- \pa_t^3\eta\cdot\nabla_\eta q.
\end{equation}
to derive the $\pa_t^3$ energy estimates.

The equation for $\mathcal V$ and $\mathcal Q$ is derived by applying $\pa_t^3$ to equations \eqref{approximate}$_2$ and \eqref{approximate}$_3$:
\begin{multline}\label{eqValpha}
	\pa_t\mathcal{V}_i + J\pa_{\eta_i} \mathcal{Q}-\Delta\pa_t^3\eta_i-2\epsilon\pa_k(\pa_t^3 (S_\eta(v)_{ij}\a_{jk}))=F_i:= - \dt\left(\pa_t^3\eta\cdot\nabla_\eta v_i\right) - \mathcal{C}_i(q)\,\,  \text{ in }\Omega,
\end{multline}
and
\begin{equation} \label{divValpha}
  J\nabla_\eta\cdot \mathcal{V}=- \mathcal{C}_i(v_i) \text{ in }\Omega,
\end{equation}
where the commutator $\mathcal{C}_i(f)$ is given by
\begin{equation}
	\begin{split}
		\mathcal{C}_i(f)=&\left[\pa_t^3, {\a_{ij}} ,\pa_j f\right]+J\pa_t^3\eta\cdot\pa_{\eta} \na f
		-J^{-1}\left[\pa_t^2, \a_{i\ell}\a_{mj}\right]\pa_t\pa_\ell \eta_m\pa_j f\\=&\left[\pa_t^3, \pa_1\eta ,\pa_2 f\right]+\left[\pa_t^3,\pa_2\eta,\pa_1 f\right]+J\pa_t^3\eta\cdot\pa_{\eta} \na f
		-J^{-1}\left[\pa_t^2, \a_{i\ell}\a_{mj}\right]\pa_t\pa_\ell \eta_m\pa_j f.
\end{split}
\end{equation}
The details of the above calculation can be found in \cite[Section 4.2.4]{GuW_2016}.

Before we begin the energy estimate for equation \eqref{eqValpha}, we give the estimates of the commutator $\mathcal C_i(f)$.
\begin{lemma}\label{lemma54}
The following estimate holds:
\begin{equation}\label{comest}
	\begin{split}
		\norm{\mathcal C_i (f)}_0&\leq P(\norm{\eta}_{\X^3})\left(\norm{\pa_t^3\eta}_1\norm{\nabla f}_1^{\frac{2}{3}}\norm{\nabla^3f}_0^{\frac{1}{3}}+\norm{\pa_t^2\nabla f}_0+\norm{\nabla f}_1\right)\\&\quad+\norm{\eta}_{\X^3}^{\frac{2}{3}}\norm{\nabla\eta}_{\X^3}^{\frac{1}{3}}\left(\norm{\pa_t^2\nabla f}_{0}+\norm{\nabla f}_1\right).
\end{split}
\end{equation}
\end{lemma}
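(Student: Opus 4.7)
The strategy is to fully expand $\mathcal{C}_i(f)$ as a finite sum of monomial products in derivatives of $\eta$ and $f$, and then bound each summand by a carefully chosen H\"older splitting together with two--dimensional Sobolev embeddings and Gagliardo--Nirenberg interpolation. Applying Leibniz's rule to the symmetric commutator $[\pa_t^3,g,h]=\pa_t^3(gh)-\pa_t^3 g\cdot h-g\pa_t^3 h$ eliminates the two extreme terms, so only the mixed pairs $\pa_t g\cdot\pa_t^2 h$ and $\pa_t^2 g\cdot\pa_t h$ survive from the commutators $[\pa_t^3,\pa_1\eta,\pa_2 f]$ and $[\pa_t^3,\pa_2\eta,\pa_1 f]$. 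The final commutator $J^{-1}[\pa_t^2,\a_{i\ell}\a_{mj}]\pa_t\pa_\ell\eta_m\pa_j f$ reduces similarly to products of $\pa_t(\a\a)$ or $\pa_t^2(\a\a)$ with $\pa_t\pa\eta$ and $\pa f$, and $\a\a$ is a quadratic polynomial in $\nabla\eta$ controlled by $P(\norm{\eta}_{\X^3})$.

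The delicate piece is the transport term $J\pa_t^3\eta\cdot\pa_\eta\nabla f$, as it is the only one carrying a full third time derivative of $\eta$. I would split this by H\"older as $L^6\cdot L^3$: the 2D Sobolev embedding yields $\norm{\pa_t^3\eta}_{L^6}\ls\norm{\pa_t^3\eta}_1$, while $\norm{\pa_\eta\nabla f}_{L^3}\ls\norm{\nabla\eta}_{L^\infty}\norm{\nabla^2 f}_{L^3}$ with the factor $\norm{\nabla\eta}_{L^\infty}\ls\norm{\nabla\eta}_2$ absorbed into $P(\norm{\eta}_{\X^3})$. The 2D Gagliardo--Nirenberg inequality $\norm{u}_{L^3}\ls\norm{u}_0^{2/3}\norm{\nabla u}_0^{1/3}$ applied to $u=\nabla^2 f$ produces exactly the distinctive factor $\norm{\nabla f}_1^{2/3}\norm{\nabla^3 f}_0^{1/3}$ in the statement. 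For the mixed term $\pa_t\pa_j\eta\cdot\pa_t^2\pa_k f$, I would pair $L^\infty\cdot L^2$ using the 2D embedding $H^2\hookrightarrow L^\infty$, giving $P(\norm{\eta}_{\X^3})\norm{\pa_t^2\nabla f}_0$. For the twin term $\pa_t^2\pa_j\eta\cdot\pa_t\pa_k f$ the $L^3\cdot L^6$ pairing together with Gagliardo--Nirenberg $\norm{\pa_t^2\pa_j\eta}_{L^3}\ls\norm{\pa_t^2\pa_j\eta}_0^{2/3}\norm{\pa_t^2\pa_j\nabla\eta}_0^{1/3}$ produces the prefactor $\norm{\eta}_{\X^3}^{2/3}\norm{\nabla\eta}_{\X^3}^{1/3}$, while the $L^6$-factor is handled through $\norm{\pa_t\pa_k f}_{L^6}\ls\norm{\pa_t\pa_k f}_1$ and interpolated so that only $\norm{\pa_t^2\nabla f}_0$ and $\norm{\nabla f}_1$ appear. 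The last commutator is treated with the same three H\"older splittings.

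The main obstacle will be choosing the correct H\"older exponents on each monomial so that exactly the admissible norms $\norm{\nabla f}_1$, $\norm{\nabla^3 f}_0$, $\norm{\pa_t^2\nabla f}_0$, $\norm{\eta}_{\X^3}$, and $\norm{\nabla\eta}_{\X^3}$ arise on the right. The fractional exponents $2/3$ and $1/3$ are not incidental: they are exactly those forced by the 2D Gagliardo--Nirenberg bound $\norm{u}_{L^3}\ls\norm{u}_0^{2/3}\norm{\nabla u}_0^{1/3}$, which is the natural interpolation whenever one factor is placed in $L^3$ and its partner in $L^6$. A secondary subtlety is that the naive bound through $\norm{\pa_t\nabla f}_1$ is not controlled by the target norms, so this intermediate quantity must be eliminated either by a temporal interpolation between $\nabla f$ and $\pa_t^2\nabla f$ or by arranging the two Gagliardo--Nirenberg endpoints so that $\pa_t\nabla f$ never appears in isolation.
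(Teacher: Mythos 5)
Your overall strategy is the same as the paper's: expand $\mathcal C_i(f)$ term by term and bound each product by H\"older's inequality, 2D Sobolev embeddings and Gagliardo--Nirenberg; in particular your treatment of the transport term $J\pa_t^3\eta\cdot\pa_\eta\nabla f$ via the $L^6\times L^\infty\times L^3$ splitting and Gagliardo--Nirenberg on $\nabla^2 f$ is exactly what the paper does. However, one step as written fails: for the mixed term $\pa_t\nabla\eta\,\pa_t^2\nabla f$ you invoke $H^2\hookrightarrow L^\infty$ to conclude $\norm{\pa_t\nabla\eta}_{L^\infty}\ls P(\norm{\eta}_{\X^3})$, but this would require control of $\norm{\pa_t\nabla\eta}_2$, i.e.\ of $\norm{\pa_t\eta}_3$, which is \emph{not} contained in $\norm{\eta}_{\X^3}$ (the energy only provides $\norm{\pa_t\eta}_2$, and in 2D $H^1\not\hookrightarrow L^\infty$). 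The paper instead uses $\norm{g}_{L^\infty}\ls\norm{g}_{W^{1,3}}\ls\norm{g}_1+\norm{\nabla g}_0^{2/3}\norm{\nabla^2 g}_0^{1/3}$ with $g=\pa_t\nabla\eta$, which produces the factor $\norm{\eta}_{\X^3}+\norm{\eta}_{\X^3}^{2/3}\norm{\nabla\eta}_{\X^3}^{1/3}$; this is precisely why the second line of \eqref{comest} multiplies $\norm{\pa_t^2\nabla f}_0$ as well, a feature your accounting (which attributes the fractional prefactor solely to Gagliardo--Nirenberg applied to $\pa_t^2\nabla\eta$ in $L^3$) does not reproduce. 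The repair is immediate, since the extra term is already present in the target estimate, but as stated your $L^\infty$ step overclaims.

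Concerning your ``secondary subtlety'': a pointwise-in-time interpolation of $\norm{\pa_t\nabla f}_0$ between $\norm{\nabla f}_0$ and $\norm{\pa_t^2\nabla f}_0$ is not available, and no rearrangement of H\"older exponents eliminates $\pa_t\nabla f$ from the terms $\pa_t^2\nabla\eta\,\pa_t\nabla f$. The paper does not attempt such an elimination: its proof simply retains factors of the form $\norm{\pa_t\pa_1 f}_1$ and $\norm{\pa_t\nabla f}_0^{2/3}\norm{\pa_t\nabla^2 f}_0^{1/3}$, and these are harmless where the lemma is applied, since there $f=q$ or $f=v$ and the bound is used against space--time norms such as $\norm{\nabla q}_{\X^2}$ (cf.\ the estimate \eqref{ggg0t}), which dominate the mixed factors. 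So the correct resolution of your obstacle is to accept those mixed space--time norms (or read the right-hand side in the $\X$-norm sense), not to engineer the exponents so that $\pa_t\nabla f$ never appears.
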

\begin{proof} By straightforward calculation and using Sobolev's embedding inequality, Galedo-Nirenberg's intepolation inequality, we have
	\begin{equation*}
		\norm{J\pa_t^3\eta\cdot\pa_{\eta}\nabla f}_0\ls \norm{\pa_t^3\eta}_{L^6}\norm{\nabla\eta}_{L^\infty}\norm{\nabla^2 f}_{L^3}\ls P\left(\norm{\eta}_{\X^3}\right)\norm{\pa_t^3\eta}_1\norm{\nabla^2 f}_0^{\frac{2}{3}}\norm{\nabla^3 f}_0^{\frac{1}{3}},
	\end{equation*}
	\begin{equation*}
		\begin{split}
			\norm{\left[\pa_t^3,\pa_1\eta,\pa_2 f\right]}_0&\ls \norm{\pa_t^2\pa_1\eta}_{L^6}\norm{\pa_t\pa_2 f}_{L^3}+\norm{\pa_t\pa_1\eta}_{L^\infty}\norm{\pa_t^2\nabla f}_0\\&\ls \norm{\pa_t^2\pa_1\eta}_1\norm{\pa_t\pa_2 f}_0^{\frac{2}{3}}\norm{\pa_t\nabla^2 f}_0^{\frac{1}{3}}+\left(\norm{\eta}_{\X^3}+\norm{\eta}_{\X^3}^{\frac{2}{3}}\norm{\nabla\eta}_{\X^3}^{\frac{1}{3}}\right)\norm{\pa_t^2\nabla f}_0,
		\end{split}
	\end{equation*}
\begin{equation*}
		\begin{split}
			\norm{\left[\pa_t^3,\pa_2\eta,\pa_1 f\right]}_0&\ls \norm{\pa_t^2\pa_2\eta}_{L^3}\norm{\pa_t\pa_1 f}_{L^6}+\norm{\pa_t\pa_2\eta}_{L^\infty}\norm{\pa_t^2\pa_1 f}_0\\&\ls \norm{\pa_t^2\nabla\eta}_0^{\frac{2}{3}}\norm{\pa_t^2\nabla^2 \eta}_0^{\frac{1}{3}}\norm{\pa_t\pa_1 f}_1+\left(\norm{\eta}_{\X^3}+\norm{\eta}_{\X^3}^{\frac{2}{3}}\norm{\nabla\eta}_{\X^3}^{\frac{1}{3}}\right)\norm{\pa_t^2\nabla f}_0,
		\end{split}
\end{equation*}
\begin{equation*}
		\begin{split}
			&\norm{J^{-1}\left[\pa_t^2,\nabla\eta\nabla\eta\right]\pa_t\nabla\eta\nabla f}_0\\&\ls \norm{\pa_t^2(\nabla\eta\nabla\eta)}_{L^3}\norm{\nabla v\nabla f}_{L^6}+\norm{\pa_t\nabla\eta\nabla\eta}_{L^{12}}\norm{\pa_t(\nabla v\nabla f)}_{L^{\frac{12}{5}}}\\&\ls \left(\norm{\eta}_{\X^3}^{\frac{2}{3}}\norm{\nabla f}_1\norm{\nabla\eta}_{\X^3}^{\frac{1}{3}}+\norm{\pa_t\nabla^2 f}_0^{\frac{1}{3}}\norm{\pa_t\nabla f}_0^{\frac{2}{3}}+\norm{\nabla f}_1\right) P\left(\norm{\eta}_{\X^3}\right).
		\end{split}
	\end{equation*}
Thus, we prove the lemma.
\end{proof}

Now, we prove the following $\pa_t^3$ energy estimates:
\begin{proposition}\label{tane2}
For $t\in [0,T]$ with $T\le T_\epsilon$, and for $\epsilon$ sufficiently small, it holds that
\begin{equation}
\label{tee3et}
\begin{split}
\int_0^t\norm{\pa_t^3 v(\mathsf t)}_0^4+\norm{\pa_t^3\nabla\eta(\mathsf t)}_0^4+\abs{\pa_1\pa_t^3\eta\cdot n(\mathsf t)}_0^4\,d{\mathsf t}+\int_0^t\left(\int_0^{\mathsf t}\norm{\sqrt\epsilon\nabla\pa_t^3 v}_0^2\right)^2\,d{\mathsf t}\\\leq M_0+\delta\sup_{t\in[0,T]}\mathfrak{E}^{\epsilon}+TP\left(\sup_{t\in[0,T]}\mathfrak{E}^{\epsilon}(t)\right).
\end{split}
\end{equation}
\end{proposition}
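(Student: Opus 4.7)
The plan is to mimic the tangential estimate of Proposition \ref{tane1} but at the level of Alinhac's good unknowns $(\mathcal V,\mathcal Q)$ defined in \eqref{gun} and governed by \eqref{eqValpha}--\eqref{divValpha}; using $\mathcal V$ in place of $\pa_t^3 v$ suppresses the highest-order commutators between $\pa_t^3$ and $\nabla_\eta$, at the price of the correction $\pa_t^3\eta\cdot\nabla_\eta v$ which is already controlled by Lemma \ref{lemma54}.

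First I would take the $L^2(\Omega)$ inner product of \eqref{eqValpha} with $\mathcal V$. Integration by parts on the elastic and viscous terms, combined with $\pa_t\pa_t^3\eta=\pa_t^3v$ and the divergence relation \eqref{divValpha}, produces the schematic identity
\begin{equation*}
\hal\frac{d}{dt}\left(\norm{\mathcal V}_0^2+\norm{\nabla\pa_t^3\eta}_0^2\right)+2\epsilon\intd J\abs{S_\eta(\pa_t^3 v)}^2+\mathrm{BT}=\intd F_i\mathcal V_i+\intd J\mathcal Q\,\mathcal C_i(v_i),
\end{equation*}
and Korn's inequality \eqref{korn} promotes the dissipation to $\epsilon\norm{\nabla\pa_t^3 v}_0^2$ modulo lower order.

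Second, I would dissect the boundary term $\mathrm{BT}$ by applying $\pa_t^3$ to the dynamical boundary condition in the form \eqref{tb} and splitting exactly as in \eqref{et2} into an elastic piece $\mathrm{ET}$, a surface tension piece $\mathrm{ST}$, and a commutator. The Wu-type manoeuvre \eqref{pol}--\eqref{pol2}, based on the antisymmetry \eqref{in2}/\eqref{anti}, recasts $\mathrm{ET}$ as the time derivative of $\intb\mathfrak B\,\pa_t^3\pa_1\eta^\perp\cdot\pa_t^3\eta\,d\sigma$ up to lower-order remainders; the decomposition \eqref{sufc} then controls it by $|\pa_1\pa_t^3\eta\cdot n|_0\norm{\pa_t^3\nabla\eta}_0$ times low-order factors involving $\mathfrak B$. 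The surface tension piece $\mathrm{ST}$ supplies the coercive quantity $\hal\frac{d}{dt}\intb\abs{\pa_1\eta}^{-1}\abs{\pa_1\pa_t^3\eta\cdot n}^2\,d\sigma$ with residuals treated as in \eqref{e01}--\eqref{e03}, while the commutator is dispatched via Lemma \ref{lemma54} and the trace estimate \eqref{tre}.

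The main obstacle, which explains the non-standard $L^4_t$ structure of \eqref{tee3et}, is that Proposition \ref{pressure} only supplies the $L^2_t$ bound $\int_0^t\norm{\nabla q}_{\X^2}^2$ with \emph{no} pointwise-in-time control of $\norm{\pa_t^3 q}_0$, and no counterpart of the trace bound \eqref{sigmae} is available for the purely temporal derivative $|\pa_t^3\pa_1^2\eta\cdot n|_{1/2}$. I therefore cannot close in $L^\infty_t$. Instead, after integrating the energy identity on $[0,\mathsf t]$ I would reach an instantaneous inequality of the form
\begin{equation*}
\norm{\pa_t^3 v}_0^2(\mathsf t)+\norm{\pa_t^3\nabla\eta}_0^2(\mathsf t)+\abs{\pa_1\pa_t^3\eta\cdot n}_0^2(\mathsf t)+\epsilon\int_0^{\mathsf t}\norm{\nabla\pa_t^3 v}_0^2\ls M_0+\mathfrak R(\mathsf t),
\end{equation*}
in which $\mathfrak R$ aggregates the commutator and pressure contributions and is controlled in $L^2_t$ by $\sqrt T\,P(\sup\mathfrak E^\epsilon)$ through Proposition \ref{pressure}, Lemma \ref{lemma54}, and \eqref{estforpsi}. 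Squaring and integrating once more in $\mathsf t$, followed by Cauchy--Schwarz, yields the $L^4_t$ form \eqref{tee3et}; Young's inequality absorbs the $\delta\sup\mathfrak E^\epsilon$ arising from the pairing of $\mathcal Q$ with $\mathcal C_i(v_i)$ and from the residual surface-tension terms, while smallness of $\epsilon$ handles the viscosity-weighted bulk remainders.
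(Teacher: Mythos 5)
Your skeleton --- Alinhac's good unknowns, splitting the boundary term via \eqref{tb}, symmetrizing the elastic piece, and recovering the $L^4_t$ form by squaring the resulting inequality and integrating again because the pressure is only controlled in $L^2_t$ --- does match the paper's strategy, but the two hardest steps are glossed over in ways that would not close. First, the term $\intd \Q\,\mathcal C_i(v_i)$ cannot simply be ``absorbed by Young's inequality'': $\Q$ contains $\pa_t^3 q$, for which Proposition \ref{pressure} provides no bound at all (the estimate \eqref{press2} stops at $\norm{q}_{\X^2}$, $\norm{\nabla q}_{\X^2}$, i.e.\ two time derivatives). In the paper this term ($R_c$) is integrated by parts in space, producing a boundary integral $\intb \pa_t^3 q\,\pa_t\a_{i2}\pa_t^{2}v_i$ which is rewritten with the Dirichlet condition \eqref{bdq} and then \emph{combined} with the surface-tension remainder $R_{b113}$ --- the piece of $-\intb\pa_t^3\left(\pa_1^2\eta_k\a_{k2}\abs{\pa_1\eta}^{-3}\right)\a_{i2}\V_i$ carrying $\pa_t^{2}\pa_1^2\eta$ --- precisely because neither $\abs{\pa_t^2\pa_1^2\eta}_{\hal}$ nor $\abs{\pa_t^3\pa_1\eta\cdot n}_{\hal}$ is available (there is no analogue of \eqref{sigmae} with three time derivatives); the interior pieces $R_{c1},R_{c3}$ are then integrated by parts in time to trade $\pa_t^3q$ for $\pa_t^2q$. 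Your claim that the surface-tension residuals are ``treated as in \eqref{e01}--\eqref{e03}'' is exactly what fails in this case, and this $R_c$/$R_{b113}$ cancellation is the key structural step your sketch never identifies.

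Second, the boundary commutator $\intb\left[\pa_t^3,\mathfrak B,\a_{i\ell}N_\ell\right]\V_i$ cannot be dispatched by Lemma \ref{lemma54} (which concerns the interior commutators $\mathcal C_i$) together with the trace estimate \eqref{tre}: taking a trace of $\pa_t^3v$ would cost $\norm{\nabla\pa_t^3v}_0$ without the $\sqrt\epsilon$ weight, and $\abs{\pa_t^3 v}_{-\hal}$ is likewise uncontrolled --- this is the very obstruction that distinguishes the $\pa_t^3$ case from \eqref{grgr}. The paper resolves it by decomposing into normal and tangential components: the normal part pairs with $\pa_t^3v_i\a_{i\ell}N_\ell$ and is handled by the normal trace estimate \eqref{gga} (exploiting $\Div_\eta v=0$), while the tangential part is converted via \eqref{in3} and an integration by parts in time. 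A similar normal--tangential mechanism, using \eqref{in3}, \eqref{in4} and \eqref{decomp2}, replaces the direct $\pa_1$-integration by parts of \eqref{pol}--\eqref{pol2} when symmetrizing $R_{b12}$; that portion of your sketch is close in spirit, but until the $R_c$/$R_{b113}$ combination and the $R_{b13}$ splitting are supplied, the argument does not close.
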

\begin{proof}
%
%
Taking the $L^2(\Omega)$ inner product of equation \eqref{eqValpha} with $\mathcal V$ yields
\begin{equation}\label{t1t}
	\begin{split}
		\dfrac{1}{2}\dfrac{d}{dt}\int_{\Omega}\abs{\mathcal V}^2+\int_{\Omega} J\nabla_\eta \mathcal Q \cdot \mathcal V-\int_{\Omega} \Delta\pa_t^3\eta_i \mathcal{V}_i-2\epsilon\intd \pa_k(\pa_t^3(S_\eta(v)_{ij}\a_{jk}))\mathcal V_i=\int_{\Omega} F\cdot \mathcal V.
 \end{split}
\end{equation}
By the commutator estimates \eqref{comest}, the pressure estimate \eqref{press2} and the definition of $\mathcal{V}$, we have
\begin{equation}
\label{ggg0t}
\begin{split}
&\int_0^{\mathsf t}\int_{\Omega} F\cdot \mathcal V\\&\le\int_0^{\mathsf t}\left(\norm{\mathcal C_i (q)}_0+\norm{\dt\left(\pa_t^3\eta\cdot\nabla_\eta v\right)}_0\right)\norm{\mathcal V}_0
\\&\ls \int_0^{\mathsf t}\left(\norm{\mathcal C_i(q)}_{0}+\norm{\pa_t^3 v}_{0}\norm{\nabla_\eta v}_{L^\infty}+\norm{\pa_t^3\eta}_{L^4}\norm{\pa_t(\nabla_\eta v)}_{L^4}\right)\norm{\mathcal V}_0
\\   &\ls \norm{\nabla q}_{L_{\mathsf t}^2(\X^2)}\left(\norm{\nabla\eta}_{L^2_{\mathsf t}(\X^3)}^\hal+\norm{\pa_t^3\nabla\eta}_{L^4_{\mathsf t}(L^2)}\right)\norm{\V}_{L_{\mathsf t}^4(L^2)}P\left(\sup_{\mathsf t\in [0,T]}\norm{\eta}_{\X^3}\right) 
\\&\quad +\norm{\nabla_\eta v}_{L^2_{\mathsf t}(H^2)}\norm{\V}_{L_{\mathsf t}^4(L^2)}\norm{\pa_t^3v}_{L^4_{\mathsf t}(L^2)}+\norm{\pa_t(\nabla_\eta v)}_{L^2_{\mathsf t}(H^1)}\norm{\V}_{L^4_{\mathsf t}(L^2)}\norm{\pa_t^3\eta}_{L^4_{\mathsf t}(H^1)}\\&\ls P\left(\sup_{\mathsf t\in[0,T]}\mathfrak E^\epsilon(\mathsf t)\right).
\end{split}
\end{equation}
We now estimate the three terms on the left-hand side of \eqref{t1t}. By the fact that $v=\partial_t\eta$, we have that for the third term,
\begin{equation}
\label{gg1t}
\begin{split}
&\int_{\Omega} J\na_\eta \Q \cdot \V-\int_{\Omega}  \pa_t^3\Delta\eta \cdot \V-\intd 2\epsilon \pa_k(\pa_t^3(S_\eta(v)_{ij}\a_{jk})\mathcal V_i\\= &\int_{\Omega} \Q \mathcal{C}_i(v_i)+\int_{\Gamma} \Q \a_{i\ell}N_\ell\V_i\,d\sigma+\int_{\Omega}  \pa_t^3\partial_j\eta_i \partial_j\V_i -\int_{\Gamma}\partial_\ell\pa_t^3\eta_iN_\ell\V_i\,d\sigma\\&\quad+2\epsilon\intd \pa_t^3(S_\eta(v)_{ij}\a_{jk})\pa_{k}\mathcal V_i-\intb 2\epsilon\pa_t^3(S_\eta(v)_{ij}\a_{j\ell})N_\ell\V_i\,d\sigma\\=&\dfrac{1}{2}\dfrac{d}{dt}\int_{\Omega}\abs{\nabla\pa_t^3\eta}^2-\underbrace{\int_{\Omega}\pa_t^3\pa_j \eta_i \pa_j(\pa_t^3\eta\cdot \na_\eta v_i) }_{R_a}+\underbrace{\int_{\Gamma}\left(\Q \a_{i\ell}-\pa_t^3F_{i\ell}-2\epsilon\pa_t^3(S_\eta(v)\a)_{i\ell}\right)N_\ell\V_i\,d\sigma}_{R_b}\\&\quad+\underbrace{\intd \Q \mathcal{C}_i(v_i)}_{R_c}+\underbrace{\intd 2\epsilon\pa_t^3(S_\eta(v)_{ij}\a_{jk})\pa_k \V_i}_{R_d}.
\end{split}
\end{equation}
For $R_a$, we have
\begin{equation*}
\int_0^{\mathsf t}\abs{R_a}\ls \norm{\abs{\pa_t^3\nabla\eta(t)}^2}_{L^2_{\mathsf t}(L^2)}\norm{\nabla_\eta v}_{L^2_{\mathsf t}(H^2)}^2\ls P\left(\sup_{\mathsf t\in[0,T]}\mathfrak E^\epsilon(\mathsf t)\right).
\end{equation*}
For $R_b$, we have
\begin{equation*}
R_b=\underbrace{\int_{\Gamma}\left(\pa_t^3(q+1)\a_{i\ell}-\pa_t^3F_{i\ell}-2\epsilon\pa_t^3(S_\eta(v)\a)_{i\ell}\right)N_\ell\V_i\,d\sigma}_{R_{b1}}+\underbrace{\intb \pa_t^3\eta \cdot \nabla_\eta q \a_{i\ell}N_\ell\V_i\,d\sigma}_{R_{b2}}\
\end{equation*}
First, with the normal trace estimate \eqref{gga} and $\abs{f}_{L^4}\ls\abs{f}_{\hal}\ls\norm{f}_1$, we have
\begin{equation*}
\begin{split}
R_{b2}&\ls\abs{\pa_t^3\eta \nabla_\eta q}_{\hal}\abs{\a_{i\ell}N_\ell
\pa_t^3v_i}_{-\hal}+\abs{\pa_t^3\eta}_{L^4}^2\abs{\nabla_\eta q \nabla_\eta v}_{0} \\&\ls \norm{\pa_t^3\eta}_1\left(\norm{\nabla q}_{L^\infty}+\norm{\nabla^2 q}_{L^3}\right)P(\norm{\eta}_{\X^3})\left(\norm{\pa_t^3v}_0+\norm{J\Div_{\eta}\pa_t^3v}_0\right)+\norm{\pa_t^3\eta}_1^2\abs{\nabla_\eta q \nabla_\eta v}_{0}.
\end{split}
\end{equation*}
Here, we use $\Div_\eta v=0$ to bound
\begin{equation*}
	\norm{J\Div_\eta \pa_t^3v}_0\ls \left(\norm{\pa_t^3\nabla\eta}_{0}\norm{\nabla^3v}_0^\hal\norm{\eta}_{\X^3}^\hal+\norm{\pa_t^2\pa_1\eta}_1\norm{\nabla\eta}_{\X^3}^\hal\norm{\pa_t^2\nabla\eta}_0^\hal\right)P(\norm{\eta}_{\X^3}).
\end{equation*}
Then, with the estimate of pressure \eqref{press2}, we obtain
\begin{equation*}
\begin{split}
\int_0^{\mathsf t}\abs{R_{b2}}\ls P\left(\sup_{\mathsf t\in[0,T]}\mathfrak E^\epsilon(\mathsf t)\right).
\end{split}
\end{equation*}
Second, using the boundary condition \eqref{tb}, we have
\begin{equation*}
\begin{split}
	R_{b1}=&\underbrace{-\int_{\Gamma}\pa_t^3\left(\dfrac{\partial_1^2\eta_k\a_{k\ell}N_\ell}{\abs{\pa_1\eta}^3}\right)\a N\cdot\V\,d\sigma}_{R_{b11}} -\underbrace{\int_{\Gamma}\mathfrak B\pa_t^3\a_{i\ell}N_\ell\V_i\,d\sigma}_{R_{b12}}\underbrace{-\intb \left[\pa_t^3, \mathfrak B,\a_{i\ell}N_\ell\right]\V_i\,d\sigma}_{R_{b13}}.
\end{split}
\end{equation*}
Regarding $R_{b13}$, it cannot be estimated as \eqref{grgr} since $\abs{\pa_t^3v}_{-\hal}$ is out of control. To handle this, we use the identity \eqref{decomp} to decompose $R_{b13}$ into two parts:
\begin{equation*}
\begin{split}
R_{b13}=&\intb \left[\pa_t^3, \mathfrak B, \a_{i\ell}N_\ell\right](n_in_j+\tau_i\tau_j) \V_j\,d\sigma\\=&
\intb \left[\pa_t^3, \mathfrak B, \a N\right]\cdot n\V\cdot n\,d\sigma+\intb \left[\pa_t^3, \mathfrak B, \a N\right]\cdot \tau\V\cdot \tau\,d\sigma
\end{split}
\end{equation*}
The first term can be estimated as 
\begin{equation*}
\ls \abs{\left[\pa_t^3,\mathfrak B, \a N\right]\pa_1\eta}_{\hal}\abs{\pa_t^3v_i\a_{i\ell}N_\ell}_{-\hal}-\intb \left[\pa_t^3,\mathfrak B, \a N\right]\cdot n \left(\pa_t^3\eta\cdot\nabla_\eta v\cdot n\right)\,d\sigma
\end{equation*}
Using the equality \eqref{in3}, the second term can be written as
\begin{equation*}
\begin{split}
&-\intb \left[\pa_t^3,\mathfrak B, \pa_1\eta\right]\cdot n \pa_t^3v\cdot \tau\,d\sigma+\intb \left[\pa_t^3,\mathfrak B, \pa_1\eta\right]\cdot n \left(\pa_t^3\eta\cdot\nabla_\eta v\cdot \tau\right)\,d\sigma\\=&-\dfrac{d}{dt}\intb \left[\pa_t^3,\mathfrak B, \pa_1\eta\right]\cdot n \pa_t^3\eta\cdot \tau\,d\sigma+\intb \pa_t\left(\left[\pa_t^3,\mathfrak B, \pa_1\eta\right]\cdot n\tau_j\right)\pa_t^3\eta_j\,d\sigma\\&+\intb \left[\pa_t^3,\mathfrak B, \pa_1\eta\right]\cdot n \left(\pa_t^3\eta\cdot\nabla_\eta v\cdot \tau\right)\,d\sigma.
\end{split}
\end{equation*}
Thus, with the normal trace estimate \eqref{gga}, we arrive at
\begin{equation*}
\begin{split}
\int_0^{\mathsf t}R_{b13}\ls &M_0+\intb\left[\pa_t^3,\mathfrak B, \pa_1\eta\right]\cdot n\pa_t^3\eta\cdot\tau\,d\sigma(\mathsf t)+\int_0^{\mathsf t}\abs{\pa_t\left(\left[\pa_t^3,\mathfrak B, \pa_1\eta\right]\cdot n\tau_j\right)}_{-\hal}\abs{\pa_t^3\eta}_{\hal}\\&+\int_0^{\mathsf t}\abs{\left[\pa_t^3,\mathfrak B, \a N\right]\pa_1\eta}_{\hal}\abs{\pa_t^3v_i\a_{i\ell}N_\ell}_{-\hal}+\int_0^{\mathsf t}\abs{\left[\pa_t^3,\mathfrak B, \a N\right]\nabla\eta\nabla v}_{-\hal}\abs{\pa_t^3\eta}_{\hal}\\\ls &M_0+\delta\norm{\pa_t^3\eta}_1^2(\mathsf t)+\epsilon\int_0^{\mathsf t}\norm{\sqrt\epsilon\pa_t^3\nabla v}_0^2+P\left(\sup_{\mathsf t\in[0,T]}\mathfrak E^\epsilon(\mathsf t)\right).
\end{split}
\end{equation*}
For the term $R_{b12}$, the derivation of the symmetric structure is different from that of \eqref{pol}--\eqref{pol2}. First, we have
\begin{equation*}
\begin{split}
	\intb\mathfrak B\pa_t^3\a_{i\ell}N_\ell\pa_t^3v_i\,d\sigma=&\dfrac{d}{dt}\int_{\Gamma}\mathfrak B\pa_t^3\a_{i\ell}N_\ell\pa_t^3\eta_i\,d\sigma-\intb \pa_t\mathfrak B\pa_t^3\a_{i\ell}N_\ell\pa_t^3\eta_i\,d\sigma-\intb \mathfrak B \pa_t^4\a_{i\ell}N_\ell\pa_t^3\eta_i\,d\sigma
\end{split}
\end{equation*}
Then, with the equalities \eqref{in3} and \eqref{in4}, we arrive at
\begin{equation}
        \label{ttttt}
\begin{split}
&-\intb \mathfrak B \pa_t^4\a_{i\ell}N_\ell\pa_t^3\eta_i\,d\sigma\\=&-\intb \mathfrak B \pa_t^4\a_{i\ell}N_\ell(n_in_j+\tau_i\tau_j)\pa_t^3\eta_j\,d\sigma\\=&\intb \mathfrak B \pa_t^4\pa_1\eta\cdot n \pa_t^3\eta\cdot\tau\,d\sigma-\intb \mathfrak B \pa_t^4\pa_1\eta\cdot\tau \pa_t^3\eta\cdot n\,d\sigma\\=&-\intb \mathfrak B \pa_t^3v\cdot n \pa_1\pa_t^3\eta\cdot\tau\,d\sigma+\intb \mathfrak B \pa_t^3v\cdot\tau \pa_1\pa_t^3\eta\cdot n\,d\sigma-\intb \pa_1(\mathfrak B n_i\tau_j-\mathfrak B n_j\tau_i)\pa_t^3v_i\pa_t^3\eta_j\,d\sigma\\=&-\intb \mathfrak B \pa_t^3v_i\pa_t^3\a_{i\ell}N_\ell\, d\sigma-\intb \pa_1\mathfrak B (n_i\tau_j-n_j\tau_i)\pa_t^3v_i\pa_t^3\eta_j\,d\sigma
\end{split}
\end{equation}
where the identity \eqref{decomp2} is applied to obtain $\pa_1(n_i\tau_j-n_j\tau_i)=0$.

Thus, we have 
\begin{equation}
        \label{tttttt}
\begin{split}
R_{b12}=&\hal \dfrac{d}{dt}\intb \mathfrak B\pa_t^3\a_{i\ell}N_\ell\pa_t^3\eta_i\,d\sigma\underbrace{-\hal \intb \pa_t\mathfrak B\pa_t^3\a_{i\ell}N_\ell\pa_t^3\eta_i\,d\sigma}_{R_{b121}}\\&\underbrace{-\hal\intb \pa_1\mathfrak B (n_i\tau_j-n_j\tau_i)\pa_t^3v_i\pa_t^3\eta_j\,d\sigma}_{R_{b122}}\underbrace{-\intb \mathfrak B\pa_t^3\a_{i\ell}N_\ell\pa_t^3\eta\cdot\nabla_\eta v_i\,d\sigma}_{R_{b123}}
\end{split}
\end{equation}
and $\int_0^{\mathsf t} R_{b121}+R_{b123}$ is bounded by 
\begin{equation*}
\begin{split}
\ls\int_0^{\mathsf t}\abs{\pa_t^3\pa_1\eta}_{-\hal}\abs{\pa_t^3\eta}_{\hal}\norm{\nabla\eta}_{\X^3}P\left(\norm{\eta}_{\X^3},\norm{\epsilon \pa_1^2\pa_tv}_1\right) \leq P\left(\sup_{\mathsf t\in[0,T]}\mathfrak E^\epsilon(\mathsf t)\right).
\end{split}
\end{equation*}
For $R_{b122}$, we see that
\begin{equation*}
\begin{split}
R_{b122}=&-\hal\intb\pa_1\mathfrak B \pa_t^3v\cdot n\pa_t^3\eta\cdot \tau\,d\sigma +\hal\dfrac{d}{dt}\intb \pa_1\mathfrak B\pa_t^3\eta\cdot n\pa_t^3\eta\cdot\tau\,d\sigma\\&-\hal\intb \pa_1\mathfrak B\pa_t^3v\cdot n\pa_t^3\eta\cdot\tau\,d\sigma-\hal\intb \pa_t\pa_1\mathfrak B \pa_t^3\eta\cdot n\pa_t^3\eta\cdot\tau\,d\sigma.
\end{split}
\end{equation*}
Then, we obtain
\begin{equation}
        \label{tttttttt}
\begin{split}
\int_0^{\mathsf t}R_{b122}\ls& M_0+\abs{\pa_1\mathfrak B}\abs{\pa_t^3\eta}_0^2(\mathsf t)+\int_0^{\mathsf t}\abs{\pa_t^3v_i\a_{i\ell}N_\ell}_{-\hal}\abs{\pa_t^3\eta}_{\hal}\abs{\dfrac{\pa_1\mathfrak B\tau}{\abs{\pa_1\eta}}}_1\\&+\int_0^{\mathsf t}\abs{\pa_t\pa_1\mathfrak B}_{0}\abs{\pa_t^3\eta\cdot n}_1\abs{\pa_t^3\eta}_0 \leq \delta\norm{\pa_t^3\eta}_0^2+P\left(\sup_{\mathsf t\in[0,T]}\mathfrak E^\epsilon(\mathsf t)\right).
\end{split}
\end{equation}

Next, for $R_{b11}$, we have
\begin{equation*}
\begin{split}
	R_{b11}=&\underbrace{-\intb \dfrac{1}{\abs{\pa_1\eta}^3}\pa_t^3\pa_1^2\eta_k\a_{k2}\a_{i2}\V_i}_{R_{b111}}\underbrace{-\intb \pa_t^3\left(\dfrac{\a_{k2}}{\abs{\pa_1\eta}^3}\right)\pa_1^2\eta_k\a_{i2}\V_i}_{R_{b112}}\\&\underbrace{-\intb 3 \pa_t\left(\dfrac{\a_{k2}}{\abs{\pa_1\eta}^3}\right)\pa_t^2\pa_1^2\eta_k\a_{i2}\V_i}_{R_{b113}}\underbrace{+\intb 3\pa_t^{2}\left(\dfrac{\a_{k2}}{\abs{\pa_1\eta}^3}\right)\pa_t\pa_1^2\eta_k\a_{i2}\V_i}_{R_{b114}}
\end{split}
\end{equation*}
For $R_{b114}$, using integragtion by parts with $t$, we have 
\begin{equation*}
\begin{split}
	\int_0^{\mathsf t} R_{b114}=&\intb \pa_t^2\pa_1\eta\pa_t\pa_1^2\eta\pa_t^3\eta_i\a_{i2}(\mathsf t)+M_0+\int_0^{\mathsf t}\intb \pa_t\left(\pa_t^2\pa_1\eta\pa_1^2\pa_t\eta\a_{i2}\right)\pa_t^3\eta_i\\&+\int_0^{\mathsf t} \intb\pa_1v\pa_1v\pa_1^2v\V_i\a_{i2}+\int_0^{\mathsf t} \intb\pa_t^2\pa_1\eta\pa_1^2v\pa_1\eta\pa_t^3\eta\cdot\nabla_\eta v\\ 
	\ls&\intb \pa_t^2\pa_1\eta\pa_t\pa_1^2\eta\pa_t^3\eta_i\a_{i2}(\mathsf t)+\int_0^{\mathsf t} \abs{\pa_t^3\pa_1\eta}_{-\hal}\abs{\pa_1^2\pa_t\eta}_{\hal}\abs{\pa_t^3\eta_i\a_{i2}}_1\\&+\int_0^{\mathsf t}\abs{\V_i\a_{i2}}_{-\hal}\abs{\pa_1^2v}_{\hal}\abs{(\pa_1v)^2}_1\\\ls& \delta\abs{\pa_t^3\pa_1\eta_i\a_{i2}}_0^2(\mathsf t)+P\left(\sup_{\mathsf t\in [0,T]}\mathfrak E^\epsilon(\mathsf t)\right).
\end{split}
\end{equation*}
For $R_{b111}$, using intergation by parts with $x_1$, we have
\begin{equation}\label{preadt}
\begin{split}
	R_{b111}=&\intb \dfrac{1}{\abs{\pa_1\eta}^3}\pa_t^3\pa_1\eta_k\a_{k2}\a_{i2}\left(\pa_1\pa_t^3v_i-\pa_1\pa_t^3\eta\cdot\nabla_\eta v_i\right)\\&+\intb \pa_t^3\pa_1\eta_k\pa_1\left(\dfrac{\a_{k2}\a_{i2}}{\abs{\pa_1\eta}^3}\right)\V_i-\intb\pa_t^3\pa_1\eta_k\dfrac{\a_{k2}\a_{i2}}{\abs{\pa_1\eta}^3}\pa_t^3\eta\cdot\pa_1(\nabla_\eta v_i)
\\=&\hal \dfrac{d}{dt}\intb \dfrac{1}{\abs{\pa_1\eta}^3}\abs{\pa_t^3\pa_1\eta_k\a_{k2}}^2\underbrace{+\intb \pa_t^3\pa_1\eta_k\pa_1\left(\dfrac{\a_{k2}\a_{i2}}{\abs{\pa_1\eta}^3}\right)\V_i}_{R_{b1111}}\\&-\hal\intb \pa_t\left( \dfrac{1}{\abs{\pa_1\eta}^3}\right)\abs{\pa_t^3\pa_1\eta_k\a_{k2}}^2-\intb\pa_t^3\pa_1\eta_k\dfrac{\a_{k2}\a_{i2}}{\abs{\pa_1\eta}^3}\pa_t^3\eta\cdot\pa_1(\nabla_\eta v_i)
\end{split}
\end{equation}
The last line of \eqref{preadt} is bounded by $\left(\norm{\pa_t^3\eta}_1^2+\abs{\pa_t^3\pa_1\eta_i\a_{i2}}_0^2\right)P\left(\norm{\eta}_{\X^3},\norm{\pa_1^2v}_1\right)$.

The term $R_{b1111}$ needs to be combined with $R_{b112}$. By straightforward calculation, we have
\begin{equation}
\begin{split}
&R_{b112}+R_{b1111}\\=&-\intb \left(\pa_t^3\left(\dfrac{\a_{k2}}{\abs{\pa_1\eta}^3}\right)\pa_1^2\eta_k\a_{i2}-\pa_t^3\pa_1\eta_k\pa_1\left(\dfrac{\a_{k2}\a_{i2}}{\abs{\pa_1\eta}^3}\right)\right)\V_i\\=&\underbrace{-
\intb \left(\dfrac{\pa_t^3\a_{k2}\pa_1^2\eta_k}{\abs{\pa_1\eta}^3}-\dfrac{\pa_t^3\pa_1\eta_k\pa_1\a_{k2}}{\abs{\pa_1\eta}^3}\right)\a_{i2}\V_i+\intb \dfrac{\pa_t^3\pa_1\eta_k\a_{k2}}{\abs{\pa_1\eta}^3}\pa_1\a_{i2}\V_i}_{I_a}\\&\underbrace{+\intb \left(\dfrac{3\pa_t^3\a_{m2}\a_{m2}\a_{k2}\pa_1^2\eta_k}{\abs{\pa_1\eta}^5}-\dfrac{3\pa_t^3\pa_1\eta_k\pa_1\a_{j2}\a_{j2}\a_{k2}}{\abs{\pa_1\eta}^5}\right)\a_{i2}\V_i}_{I_b}
\\&-\intb \left(\left[\pa_t^3,\a_{k2},\dfrac{1}{\abs{\pa_1\eta}^3}\right]-3\left[\pa_t^2, \pa_t\a_{m2},\dfrac{\a_{m2}}{\abs{\pa_1\eta}^5}\right]\a_{k2}\right)\pa_1^2\eta_k\a_{i2}\V_i\\&+3\intb \pa_t\a_{m2}\pa_t^{2}\dfrac{\a_{m2}}{\abs{\pa_1\eta}^5}\pa_1^2\eta_k\a_{k2}\a_{i2}\V_i
\end{split}
\label{lplplt}
\end{equation}
Substituing $\a_{\cdot2}=(-\pa_1\eta_2,\pa_1\eta_1)^\text{T}$ into $I_b$, we have
\begin{equation*}
I_b=-\intb \dfrac{3\pa_t^3\pa_1\eta_k\pa_1\a_{k2}}{\abs{\pa_1\eta}^3}\a_{i2}\V_i
\end{equation*}
by direct calculation.
Then, using \eqref{in2}, we arrive at 
\begin{equation*}
\begin{split}
I_a+I_b=&-\intb\dfrac{\pa_t^3\pa_1\eta_k\pa_1\a_{k2}}{\abs{\pa_1\eta}^3}\a_{i2}\V_i-\dfrac{\pa_t^3\pa_1\eta_k\a_{k2}}{\abs{\pa_1\eta}^3}\pa_1\a_{i2}\V_i\\=&\intb \dfrac{1}{\abs{\pa_1\eta}^3}\pa_t^3\pa_1\eta_k\V_i\left(\a_{k2}\pa_1\a_{i2}-\a_{i2}\pa_1\a_{k2}\right)=\intb \dfrac{\pa_1^2\eta_k\a_{k2}}{\abs{\pa_1\eta}^3} \pa_t^3\a_{i2}\V_i
\end{split}
\end{equation*}
Thus, $I_a+I_b$ can be estimated by using the integration-by-parts technique, similar to \eqref{ttttt}-\eqref{tttttttt}. The other terms of \eqref{lplplt} can be bounded by $\norm{\pa_t^3\eta}_1\norm{v}_{\X^3}P\left(\norm{\eta}_{\X^3},\norm{\pa_t^2\pa_1\eta}_1\right)+P\left(\sup\limits_{t\in [0,T]}\mathfrak E^\epsilon(t)\right)$. Thus, we obtain
\begin{equation*}
\begin{split}
R_{b112}+R_{b1111}=&\hal\dfrac{d}{dt}\intb\dfrac{\pa_1^2\eta_k\a_{k2}}{\abs{\pa_1\eta}^3}\pa_t^3\a_{i2}\pa_t^3\eta_i+\norm{\bp^3\eta}_1^2P(\norm{\eta}_{\X^3})
\end{split}
\end{equation*}

The estimate of the term $R_{b113}$ is more involved since we do not have $\abs{\pa_t^2\pa_1^2\eta}_{\hal}$ or $\abs{\pa_t^3\pa_1\eta_i\a_{i2}}_{\hal}$. It needs to be combined with $R_c$. First, by intergartion by parts, we have
\begin{equation*}
\begin{split}
R_c=&\intd 3\Q\pa_t\a_{ij}\pa_t^{2}\pa_j v_i+\intd  3\Q\pa_t^{2}\a_{ij}\pa_t\pa_jv_i,
\\=&\underbrace{-\intd 3\pa_j\pa_t^3q\pa_t\a_{ij}\pa_t^{2}v_i}_{R_{c1}}\underbrace{+\intb 3\pa_t^3q\pa_t\a_{i2}\pa_t^{2}v_i}_{R_{c2}}\underbrace{+\intd  3\pa_t^3q\pa_t^{2}\a_{ij}\pa_t\pa_jv_i}_{R_{c3}}
\\&\underbrace{-\intd 3\pa_t^3\eta\cdot\na_\eta q\pa_t\a_{ij}\pa_t^2\pa_j v_i-\intd 3\pa_t^3\eta\cdot\nabla_\eta q \pa_t^{2}\a_{ij}\pa_t\pa_jv_i}_{R_{c4}}
\end{split}
\end{equation*}
\begin{equation*}
	\begin{split}
		\int_0^{\mathsf t} R_{c4}\,d{\mathsf t}&\ls	\int_0^{\mathsf t}\norm{\nabla_\eta q\nabla v}_{L^\infty}\norm{\pa_t^3\eta}_0\norm{\pa_t^3\nabla\eta}_0+\norm{\pa_t^2\nabla\eta}_0\norm{\pa_t^3\eta}_{L^6}\norm{\nabla_\eta q}_{L^6}\norm{\pa_t\nabla v}_{L^6}\\&\ls P\left(\sup_{t\in [0,T]}\mathfrak E^\epsilon(t)\right).
	\end{split}
\end{equation*}
For $R_{c3}$, by integration by parts with time, we have 
\begin{equation*}
R_{c3}=\dfrac{d}{dt}\intd \pa_t^2q\pa_t^2\a_{ij}\pa_t\pa_jv_i-\intd \pa_t^2q \pa_t\left(\pa_t^2 \a_{ij}\pa_t\pa_jv_i\right),
\end{equation*}
the second term is bounded by $\norm{\pa_t^2q}_{1}\left(\norm{\pa_t^3\eta}_1\norm{\pa_t\nabla v}_{L^4}+\norm{\pa_t^2\nabla\eta}_{L^4}\norm{\pa_t^2\nabla v}_0\right)$.

For $R_{c1}$, 
\begin{equation*}
R_{c1}=-\dfrac{d}{dt}\intd 3\pa_j\pa_t^2 q \pa_t\a_{ij}\pa_t^2v_i +\intd 3\pa_j\pa_t^2q\pa_t\left(\pa_t\a_{ij}\pa_t^2v_i\right),
\end{equation*}
the second term is bounded by $\norm{\nabla q}_{\X^2}\left(\norm{\pa_t^3v}_0\norm{\nabla v}_{L^\infty}+\norm{\pa_t^2v}_{L^4}\norm{\pa_t^2\nabla \eta}_{L^4}\right)$.

For $R_{c2}$, with the boundary condition \eqref{bdq}, it can be written as
\begin{equation*}
\begin{split}
&\underbrace{-3\intb \pa_t^3\left(\dfrac{\partial_1^2\eta_k\a_{k2}}{\abs{\pa_1\eta}^3}\right)\pa_t\a_{i2}\pa_t^2v_i}_{R_{c21}}\underbrace{-3\intb \pa_t^3\left(-\dfrac{1+\epsilon\pa_t\a_{j2}\a_{j2}}{\abs{\pa_1\eta}^2}\right)\pa_t\a_{i2}\pa_t^{2}v_i}_{R_{c22}}
\end{split}
\end{equation*}
Combining with $R_{b113}$, we obtain
\begin{equation*}
\begin{split}
R_{b113}+R_{c21}=&-3\intb \pa_t\a_{i2}\pa_t^{2}v_i\pa_t^3\dfrac{\pa_1^2\eta_k\a_{k2}}{\abs{\pa_1\eta}^3}+\pa_t\left(\dfrac{\a_{k2}}{\abs{\pa_1\eta}^3}\right)\pa_t^{2}\pa_1^2\eta_k\a_{i2}\pa_t^3v_i\\&+\intb \pa_t\left(\dfrac{\a_{k2}}{\abs{\pa_1\eta}^3}\right)\pa_t^{2}\pa_1^2\eta_k\a_{i2}\pa_t^3\eta\cdot\nabla_\eta v_i\\=&\underbrace{-3\intb \pa_t\a_{i2}\pa_t^{2}v_i\dfrac{\pa_t^3\pa_1^2\eta_k\a_{k2}}{\abs{\pa_1\eta}^3}+\pa_t\left(\dfrac{\a_{k2}}{\abs{\pa_1\eta}^3}\right)\pa_t^{2}\pa_1^2\eta_k\a_{i2}\pa_t^3v_i}_{\mathfrak J}\\&-\sum_{\abs{\alpha}<3}3C_\alpha\intb \pa_t\a_{i2}\pa_t^{2}v_i\pa_t^\alpha\pa_1^2\eta_k\pa_t^{3-\alpha}\dfrac{\a_{k2}}{\abs{\pa_1\eta}^3}\\&+\intb \pa_t\left(\dfrac{\a_{k2}}{\abs{\pa_1\eta}^3}\right)\pa_t^{2}\pa_1^2\eta_k\a_{i2}\pa_t^3\eta\cdot\nabla_\eta v_i
\end{split}
\end{equation*}
The last two lines are bounded by $\norm{\pa_t^3\eta}_1\left(\norm{\pa_t^3\eta}_1+\norm{\pa_t^2\pa_1\eta}_1\right)P\left(\norm{\eta}_{\X^3}\right)$.
For $\mathfrak J$, we have
\begin{equation*}
\begin{split}
\mathfrak J=&-3\dfrac{d}{dt}\intb  \pa_t^{2}\pa_1^{2}\eta_k\a_{k2}\pa_t^{2}v_i\left(\dfrac{\pa_t\a_{i2}}{\abs{\pa_1\eta}^3}\right)\underbrace{+3\intb \pa_t^{2}\pa_1^{2}\eta_k\pa_t^{3}v_i\left(\dfrac{\pa_t\a_{i2}\a_{k2}-\a_{i2}\pa_t\a_{k2}}{\abs{\pa_1\eta}^3}\right)}_{\mathfrak J_a}\\&\underbrace{+3\intb\pa_t^{2}\pa_1^{2}\eta_k\pa_t^{2}v_i\pa_t\left(\dfrac{\pa_t\a_{i2}\a_{k2}}{\abs{\pa_1\eta}^3}\right)}_{\mathfrak J_b}\underbrace{-3\intb\pa_t\dfrac{1}{\abs{\pa_1\eta}^3} \pa_t^{2}\pa_1^{2}\eta_k\a_{k2}\pa_t^{3}v_i\a_{i2}}_{\mathfrak J_c}.
 \end{split}
\end{equation*}
For $\mathfrak J_a$, when $i=k$, it is zero. When $i\neq k$, we have
\begin{equation}
\begin{split}
&3\intb \left(\pa_t^{2}\pa_1^{2}\eta_1\pa_t^{3}v_2-\pa_t^{2}\pa_1^{2}\eta_2\pa_t^{3}v_1\right)\left(\dfrac{\pa_t\a_{12}\a_{22}-\a_{12}\pa_t\a_{22}}{\abs{\pa_1\eta}^3}\right)\\=&3\dfrac{d}{dt}\intb \left(\pa_t^2\pa_1^2\eta_1\pa_t^{3}\eta_2-\pa_t^2\pa_1^2\eta_2\pa_t^3\eta_1\right)\left(\dfrac{-\pa_t\pa_1\eta_i\a_{i2}}{\abs{\pa_1\eta}^3}\right)\\&-3\intb \left(\pa_t^2\pa_1^2\eta_1\pa_t^{3}\eta_2-\pa_t^{2}\pa_1^2\eta_2\pa_t^{3}\eta_1\right)\pa_t\left(\dfrac{-\pa_t\pa_1\eta_i\a_{i2}}{\abs{\pa_1\eta}^3}\right)\\&+3\intb \left(\pa_t^3\pa_1\eta_1\pa_t^3\eta_2-\pa_t^3\pa_1\eta_2\pa_t^3\eta_1\right)\pa_1\left(\dfrac{-\pa_t\pa_1\eta_i\a_{i2}}{\abs{\pa_1\eta}^3}\right).
\end{split}
\label{ebt}
\end{equation}
The last two terms of \eqref{ebt} can be bounded by
\begin{equation*}
	\left(\norm{\pa_t^3\eta}_1+\norm{\pa_t^2\pa_1\eta}_1\right)\norm{\bp^3\eta}_1\abs{\bp^2\pa_1\eta_i\a_{i2}}_1P\left(\norm{\eta}_{\X^3}\right).
\end{equation*}
For $\mathfrak J_b$, it can be bounded by
\begin{equation*}
	\abs{\pa_t^{2}\pa_1^{2}\eta}_{-\hal}\abs{\pa_t^3\eta}_{\hal}\abs{\pa_1v\pa_1v}_{1}+\abs{\pa_t^2\pa_1^2\eta_k\a_{k2}}_0\abs{\pa_t^2v\pa_t^2\pa_1\eta}_0P\left(\norm{\eta}_{\X^3}\right)
\end{equation*}
For $\mathfrak J_c$, 
\begin{equation*}
\begin{split}
3\intb\pa_t\dfrac{1}{\abs{\pa_1\eta}^3} \pa_t^{2}\pa_1^{2}\eta_k\a_{k2}\pa_t^{3}v_i\a_{i2}\ls \abs{\pa_t^2\pa_1^2\eta_k\a_{k2}}_\hal\abs{\pa_t^3v_i\a_{i2}}_{-\hal}\abs{\pa_1v}_1.
\end{split}
\end{equation*}
Thus, we arrive at
\begin{equation*}
	\int_0^{\mathsf t} R_{b113}+R_{c21} \ls P\left(\sup_{t\in [0,T]}\mathfrak E^\epsilon(t)\right).
\end{equation*}
For $R_{c22}$, we have
\begin{equation*}
	\int_0^{\mathsf t} R_{c22}\ls \delta \int_0^{\mathsf t} \epsilon\norm{\pa_t^3\pa_1v}_1^2+P\left(\sup_{t\in [0,T]}\mathfrak E^\epsilon(t)\right)
\end{equation*}

For $R_d$, we have
\begin{equation*}
R_d=\intd \epsilon\abs{S_\eta(\pa_t^3v)}^2+\sqrt\epsilon P\left(\norm{\pa_t^3\eta}_{1},\norm{\eta}_{\X^3},\norm{v}_{\X^3}\right)\norm{\sqrt\epsilon\nabla\pa_t^3v}_0.
\end{equation*}
Integrating with respect to time, combining the estimates and using Korn's inequality, we have
\begin{equation}\label{ohoh1t}
\begin{split}
&\norm{\mathcal V(\mathsf t)}_0^2+\norm{\pa_t^3\nabla \eta(t)}_0^2+\dfrac{1}{4}\abs{\pa_t^3\pa_1\eta_i \a_{i2}(\mathsf t)}^2_{0}+\hal\epsilon\int_0^{\mathsf t}\norm{\pa_t^3\nabla v}_0^2+\mathfrak F \\&\leq M_0+\delta\left(\norm{\pa_t^3\eta}_1^2+\norm{\epsilon\pa_t^3\nabla v}_0^2+\abs{\pa_t^3\pa_1\eta\cdot n}_0^2\right)(\mathsf t)+T^{\frac{1}{4}}P\left(\sup_{\mathsf t\in [0,T]}\mathfrak E^\epsilon(\mathsf t)\right).
\end{split}
\end{equation}
where 
\begin{equation*}
\begin{split}
\mathfrak F=&-\hal\intb \mathfrak B\pa_t^3\a_{i\ell}N_\ell\pa_t^3\eta_i\,d\sigma-3\intb \dfrac{-\pa_t\pa_1\eta_i\a_{i2}}{\abs{\pa_1\eta}^3}\pa_t^2\pa_1\a_{i2}\pa_t^3\eta_i-3\intb  \pa_t^{2}\pa_1^{2}\eta_k\a_{k2}\pa_t^{2}v_i\left(\dfrac{\pa_t\a_{i2}}{\abs{\pa_1\eta}^3}\right)\\&+\intd 3\pa_t^2q\pa_t^2\a_{ij}\pa_t\pa_jv_i+\intd 3\pa_j\pa_t^2 q \pa_t\a_{ij}\pa_t^2v_i.
\end{split}
\end{equation*}
Since we only have the estimate for $\int_0^t\norm{\pa_t^2q}_{1}^2$, we square both sides of \eqref{ohoh1t} and integrate from $0$ to $t$ again. Combining the techniques used for \eqref{sufc}, we obtain
\begin{equation*}
\begin{split}
&\int_0^t\norm{\V(\tau)}_0^4+\norm{\pa_t^3\nabla\eta(\tau)}_0^4+\abs{\pa_t^3\pa_1\eta\cdot n(\tau)}_0^4\,d{\mathsf t}+\int_0^t\epsilon^2\left(\int_0^{\mathsf t} \norm{\pa_t^3\nabla v}_0^2\right)^2\,d{\mathsf t}\\&\ls  M_0+\delta\sup_{t\in[0,T]}\mathfrak{E}^{\epsilon}(t)+T^{\frac{1}{4}} P\left(\sup_{t\in[0,T]}\mathfrak{E}^{\epsilon}(t)\right).
\end{split}
\end{equation*}
By the definition of $\mathcal{V}$, using \eqref{ohoh1}  and the fundamental theorem of calculous, we obtain
\begin{equation*}
\begin{split}
\int_0^t\norm{\pa_t^3 v(\mathsf t)}_0^4\,d\mathsf t&\ls\int_0^t\norm{\mathcal V(\mathsf t)}_0^4\,d\mathsf t+\int_0^t\norm{\pa_t^3\eta}_0^4\norm{\a_{jk}\partial_{k} v}_{L^{\infty}}^4(\mathsf t)\,d\mathsf t
\\&\leq M_0+\delta\sup_{t\in[0,T]}\mathfrak{E}^{\epsilon}(t)+T^{\frac{1}{4}} P\left(\sup_{t\in[0,T]}\mathfrak{E}^{\epsilon}(t)\right).
\end{split}
\end{equation*}
We thus conclude the proposition.
\end{proof}
\subsection{Normal deriviate estimates}\label{curle}
Now, we derive the normal derivative estimates by using the equation:
\begin{equation}
\label{noreq}
-\pa_2^2\eta-\epsilon\abs{\a_{\cdot2}}^2\pa_2^2v=\pa_1^2\eta-\pa_t v+J\nabla_\eta q+\epsilon\left(\sum_{\alpha+\beta \neq 4}\pa_\alpha(\a_{k\alpha}\a_{k\beta}\pa_\beta v)+\pa_2\abs{\a_{\cdot2}}^2\pa_2v\right)-\phi^\epsilon+\epsilon\nabla\cdot\Psi
\end{equation}
This implies that we can estimate the normal derivatives of $\eta, v$ in terms of the $\eta,v$ terms with less normal derivatives, and we obtain the following proposition:
\begin{proposition}\label{normal}
For $t\in [0,T]$ with $T\le T_\epsilon$, it holds that
\begin{equation}
\label{tnn}
\begin{split}
\int_0^t\norm{v(t)}_{\X^3}^2+\norm{\nabla\eta(t)}_{\X^3}^2+\norm{\epsilon\nabla v}_{\X^3}^2\,d{\mathsf t}+\norm{\sqrt\epsilon \nabla^2\eta(t)}_{\X^2}^2+\norm{\eta(t)}_{\X^3}^2\\\leq M_0+\delta\sup_{t\in[0,T]}\mathfrak{E}^{\epsilon}(t)+ T^{\frac{1}{4}} P\left(\sup_{t\in[0,T]}\mathfrak{E}^{\epsilon}(t)\right).
\end{split}
\end{equation}
\end{proposition}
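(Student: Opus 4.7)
The plan is to invert the equation \eqref{noreq} algebraically, expressing the highest-order normal derivatives of $(\eta,\sqrt\epsilon\,v)$ in terms of (a) strictly lower-order normal derivatives, (b) tangential derivatives already bounded by Propositions \ref{basic}, \ref{tane1} and \ref{tane2}, and (c) derivatives of $q$ controlled by Proposition \ref{pressure}. The structural fact making this possible is that the right-hand side of \eqref{noreq} contains at most \emph{one} normal derivative of $v$ (through the mixed viscosity terms $\epsilon\pa_\alpha(\a_{k\alpha}\a_{k\beta}\pa_\beta v)$ with $\alpha+\beta\neq 4$ and through $\pa_2|\a_{\cdot2}|^2\pa_2 v$) and at most one normal derivative of $q$ (through $J\nabla_\eta q$), while the left-hand side carries two normal derivatives on each of $\eta$ and $\sqrt\epsilon\,v$.

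I would carry out an induction on the order $k$ of the normal derivatives applied. At the base ($k=0$), the quantities $\norm{\bp^3\eta}_0$, $\norm{\bp^2\pa_1\nabla\eta}_0$ and $\norm{\pa_t^3\nabla\eta}_0$ are directly supplied by Propositions \ref{basic}--\ref{tane2}, where already the $H^1$-norm on $\bp^2\pa_1\eta$ in Proposition \ref{tane1} delivers one normal derivative ``for free''. For the inductive step, given $k\geq 2$, I apply $\bp^j\pa_2^{k-2}$ to \eqref{noreq} for $|j|+k\leq 4$, which covers every term inside $\norm{\nabla\eta}_{\X^3}^2$ and $\norm{\epsilon\nabla v}_{\X^3}^2$. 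The left-hand side produces
\begin{equation*}
-\bp^j\pa_2^k\eta-\epsilon\abs{\a_{\cdot2}}^2\bp^j\pa_2^k v+\text{(commutators with at most $k-1$ normal derivatives of $\eta,v$)},
\end{equation*}
while the right-hand side, after distributing $\bp^j\pa_2^{k-2}$, contains at most $k-1$ normal derivatives of $v$ and of $q$. Since $\abs{\a_{\cdot2}}^2=\abs{\pa_1\eta}^2\geq c_0^2/16$ by \eqref{inin3}, the coefficient in front of the viscous term is uniformly positive and bounded, hence $\norm{\bp^j\pa_2^k\eta}_0$ and $\norm{\sqrt\epsilon\,\bp^j\pa_2^k v}_0$ decouple and are simultaneously controlled. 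The pressure contributions are absorbed by Proposition \ref{pressure}, producing exactly the form $\delta\sup_t\mathfrak E^\epsilon+T^{1/4}P(\sup_t\mathfrak E^\epsilon)$; the lower-order normal derivatives are furnished by the previous stage of the induction.

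Closing the induction yields the time-integrated bounds on $\norm{v}_{\X^3}^2$, $\norm{\nabla\eta}_{\X^3}^2$ and $\norm{\epsilon\nabla v}_{\X^3}^2$. The pointwise-in-time quantities $\norm{\eta(t)}_{\X^3}^2$ and $\norm{\sqrt\epsilon\nabla^2\eta(t)}_{\X^2}^2$ are then deduced from the identity $\eta(t)=\eta_0^\epsilon+\int_0^t v\,d\tau$, the bound \eqref{estforpsi} on the smoothed initial data, and H\"older's inequality in time: each such norm is dominated by its initial value (of order $M_0$) plus $\sqrt{T}$ times the corresponding time-integrated bulk norm already obtained. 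The main obstacle is the bookkeeping in the commutators produced when $\bp^j\pa_2^{k-2}$ lands on the coefficients $\a$, $\abs{\a_{\cdot2}}^2$ and $J$: one must verify that each such commutator contains strictly fewer normal derivatives of $(\eta,v)$ than the top-order LHS terms so that the induction actually closes, and one must track the $\sqrt\epsilon$-prefactors carefully so that the viscous contributions remain $\epsilon$-uniform. Once this is in place, all remainders arrange themselves into the required form $M_0+\delta\sup_t\mathfrak E^\epsilon+T^{1/4}P(\sup_t\mathfrak E^\epsilon)$.
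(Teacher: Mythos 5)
Your overall scheme coincides with the paper's: differentiate \eqref{noreq}, induct on the number of normal derivatives, absorb the pressure via Proposition \ref{pressure} and the tangential terms via Propositions \ref{tane1}--\ref{tane2}, and recover $\norm{\eta(t)}_{\X^3}$ from $\eta(t)=\eta_0^\epsilon+\int_0^t v\,d\tau$ (that last step is fine). However, there is a genuine gap at the central step. From a bound on the combination $-\bp^j\pa_2^k\eta-\epsilon\abs{\a_{\cdot2}}^2\bp^j\pa_2^k v$ in $L^2$ you cannot conclude that $\norm{\bp^j\pa_2^k\eta}_0$ and $\norm{\epsilon\bp^j\pa_2^k v}_0$ ``decouple'': uniform positivity and boundedness of $\abs{\a_{\cdot2}}^2$ does nothing to exclude cancellation between the two terms (e.g.\ $\bp^j\pa_2^k v\approx-\epsilon^{-1}\abs{\a_{\cdot2}}^{-2}\bp^j\pa_2^k\eta$ would make the sum small while each factor is huge). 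The paper's resolution is to square the differentiated equation and exploit $v=\pa_t\eta$: the cross term $2\epsilon\abs{\a_{\cdot2}}^2\,\bp^j\pa_2^k\eta\cdot\bp^j\pa_2^k v$ equals, after integration over $\Omega$, $\frac{d}{dt}\intd\epsilon\abs{\a_{\cdot2}}^2\abs{\bp^j\pa_2^k\eta}^2-\intd\epsilon\,\pa_t\abs{\a_{\cdot2}}^2\abs{\bp^j\pa_2^k\eta}^2$, i.e.\ a signed total time derivative plus a harmless lower-order remainder. It is this sign structure, not the positivity of the coefficient, that yields the separate $L^2_t$ control of $\norm{\bp^j\pa_2^k\eta}_0^2$ and $\norm{\epsilon\bp^j\pa_2^k v}_0^2$.

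The same omission breaks your derivation of the pointwise-in-time bound on $\norm{\sqrt\epsilon\nabla^2\eta(t)}_{\X^2}^2$. The fundamental theorem of calculus plus H\"older would require, e.g.\ for the $\ell=0$ component, a bound on $\int_0^t\norm{\sqrt\epsilon\,\nabla^4 v}_0^2$, whereas the bulk energy only controls $\int_0^t\norm{\epsilon\nabla v}_{\X^3}^2$; writing $\sqrt\epsilon=\epsilon^{-1/2}\cdot\epsilon$ costs a factor $\epsilon^{-1}$ at the level of squares, so the bound is not $\epsilon$-uniform. In the paper this $\sup_t\norm{\sqrt\epsilon\bp^j\pa_2^k\eta}_0^2$ term is precisely the by-product of the $\frac{d}{dt}\intd\epsilon\abs{\a_{\cdot2}}^2\abs{\bp^j\pa_2^k\eta}^2$ term above, after integrating in time. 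Once you insert this ``square the equation and use $v=\pa_t\eta$'' step, your induction and commutator bookkeeping close essentially as you describe.
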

\begin{proof}
	First, applying $\pa_t^2$ in equation \eqref{noreq} and then squaring both sides, we have 
\begin{equation*}
\begin{split}
&\norm{\pa_t^2\pa_2^2\eta(t)}_0^2+\norm{\epsilon\pa_t^2\pa_2^2v(t)}_0^2+\dfrac{d}{dt}\intd \epsilon\abs{\a_{\cdot2}}^2\abs{\pa_t^2\pa_2^2\eta}^2-\intd \epsilon \pa_t\abs{\a_{\cdot 2}}^2\abs{\pa_t^2\pa_2^2\eta}^2\\&\ls \norm{\nabla\pa_t^2\pa_1\eta}_0^2+\norm{\pa_t^3v}_0^2+\norm{\pa_t^2(\nabla_\eta q)}_0^2+\norm{\epsilon\pa_t^2\left(\sum_{\alpha+\beta \neq 4}\pa_\alpha(\a_{k\alpha}\a_{k\beta}\pa_\beta v)+\pa_2\abs{\a_{\cdot2}}^2\pa_2v\right)}_0^2\\&\quad+\norm{\epsilon\nabla\cdot\pa_t^2\Psi}_0^2.
\end{split}
\end{equation*}
Integrating over time and using the pressure estimate \eqref{press2} and the $\pa_t^3$ energy estimate \eqref{tee3et}, we obtain
\begin{equation*}
\begin{split}
&\int_0^t\norm{\pa_t^2\pa_2^2\eta}_0^2+\int_0^t\norm{\epsilon\pa_t^2\pa_2^2v}_0^2+\sup_t\norm{\sqrt\epsilon\pa_t^2\pa_2^2\eta}_0^2\\&\ls M_0+\int_0^t\left(\norm{\epsilon\pa_t^2\nabla^2\eta}_0^2+\norm{\epsilon\pa_t^2\pa_1 v}_1^2\right)P\left(\norm{\eta}_{\X^3}^2\right)+\delta \sup_{t\in [0,T]}\mathfrak E^\epsilon(t)+T^{\frac{1}{4}}P\left(\sup_{t\in [0,T]}\mathfrak E^\epsilon(t)\right).
\end{split}
\end{equation*}
%
In a similar way, we can obtain  
$$\int_0^t\left(\norm{\bp^2\pa_2^2\eta}_0^2+\norm{\epsilon\bp^2\pa_2^2v}_0^2\right)\,d{\mathsf t}+\sup_t \norm{\sqrt\epsilon\bp^2\pa_2^2\eta}_0^2\ls M_0+T^{\frac{1}{4}}P\left(\sup_{t\in [0,T]}\mathfrak E^\epsilon(t)\right).$$
Then, we can obtain higher-order normal derivatives by induction. By applying $\bp^\ell\pa_2^{2-\ell}, \ell\leq 2$, we obtain
\begin{equation*}
\begin{split}
&\int_0^t\left(\norm{\bp^\ell\pa_2^{4-\ell}\eta}_0^2+\norm{\epsilon\bp^\ell\pa_2^{4-\ell}v}_0^2\right)\,d{\mathsf t}+\sup_t \norm{\sqrt\epsilon\bp^\ell\pa_2^{4-\ell}\eta}_0^2\\&\leq M_0+\int_0^t\norm{\nabla\bp^{\ell+1}\pa_2^{2-\ell}\eta}_0^2+\norm{\bp^\ell\pa_2^{2-\ell}\pa_t v}_0^2\,d{\mathsf t}\\&\quad+\int_0^t\left(\norm{\nabla q}_{\X^2}^2+\norm{\epsilon\bp^{\ell}\pa_2^{3-\ell}\nabla^2\eta}_0^2+\norm{\epsilon \bp^{\ell+1}\pa_2^{3-\ell}\nabla v}_0^2\right)P\left(\norm{\eta}_{\X^3}^2\right)\\&\leq M_0+T^{\frac{1}{4}}P\left(\sup_{t\in [0,T]}\mathfrak E^\epsilon(t)\right).
\end{split}
\end{equation*}
Thus, we conclude the proposition.
\end{proof}
\subsection{Synthesis}

We now collect the estimates derived previously to obtain our estimates and verify the a priori assumptions \eqref{inin3}. That is, we now present the
\begin{proof}[Proof of Theorem \ref{th43}]
Combined with Proposition \ref{tane1}, \ref{tane2} and \ref{normal}, we finally obtain that
\begin{equation*}
	\sup_{[0,T]}\mathfrak{E}^{\epsilon}(t)\leq M_0+\delta\sup_{t\in[0,T]}\mathfrak{E}^{\epsilon}(t)+T^{\frac{1}{4}} P\left(\sup_{t\in[0,T]}\mathfrak{E}^{\epsilon}(t)\right).
\end{equation*}
By taking $\delta$ to be sufficiently small, we obtain a time of existence $T_1$ independent of $\epsilon$ and an estimate on $[0,T_1]$ independent of $\epsilon$ of the type:
\begin{equation*}
\sup_{[0,T_1]}\mathfrak{E}^{\epsilon}(t)\leq 2M_0.
\end{equation*}

The proof of Theorem \ref{th43} is thus completed.
\end{proof}
\section{Local well-posedness for the elastodynamic system}
In this section, we present the
\begin{proof}[Proof of Theorem \ref{thm}]
	For each $\epsilon>0$, we recover the dependence of the solutions to the viscoelasticity system \eqref{approximate} on $\epsilon$ as $(v^\epsilon,q^\epsilon,\eta^\epsilon)$, which can be constructed
by the approach in \cite{Le,XZZ_13} (the reader is also referred to \cite{WangTan}).
The viscosity-independent estimates \eqref{bound} combined with a continuation argument (see \cite[Section 9]{CS06}) imply that  $(v^\epsilon,q^\epsilon,\eta^\epsilon)$ is indeed a solution of \eqref{approximate} on the time interval $[0,T_1]$ and yields the strong convergence of $(v^\epsilon,q^\epsilon,\eta^\epsilon)$ to a limit $(v ,q ,\eta )$, up to extraction of a subsequence, which is more than sufficient for us to pass to the limit as $\epsilon\rightarrow0$ in \eqref{approximate} for each $t\in[0,T_1]$. We then find that $(v ,q ,\eta )$ is a strong solution to \eqref{eq:elastic} on $[0,T_1]$. 
This shows the existence of solutions to \eqref{eq:elastic}.

After we obtain a strong solution $(v,q,\eta)$ by taking the viscosity vanishing limit, the estimate \eqref{enesti} does not hold directly by taking $\epsilon=0$. This is due to the different time regularities of $\mathfrak E(t)$ and $\mathfrak E^\epsilon(t)$. For instance, for $\mathfrak E(t)$, $\pa_t^3v\in L^\infty_t(L^2)$, while it is only $L^4_t(L^2)$ for the energy $\mathfrak E^\epsilon(t)$. Thus, to prove the estimate \eqref{enesti}, we need to do a priori estimate of $(v, q,\eta)$ with energy $\mathfrak E(t)$. The strategy of a priori estimates is very similar to our viscosity-independent one: hence, we only point out the different places to improve the time regularity.

The main improvement is the $\pa_t^3$ energy estimate; in fact, without the viscosity, the $\pa_t^3$ energy estimate can be improved to the $L^\infty_t$ space:
\begin{equation*}
\norm{\pa_t^3v}_0^2(t)+\norm{\nabla\pa_t^3\eta}_0^2(t)+\abs{\pa_t^3\pa_1\eta\cdot n}_0^2(t) \ls M_0+T^{\frac{1}{4}}P\left(\sup_{t\in [0,T]}\mathfrak E(t)\right)+\delta\sup_{t\in [0,T]}\mathfrak E(t).
\end{equation*}
This because we can have $L^\infty_t$ bound for the pressure $\norm{\dt^2\nabla q}_1$ (not only $L^2_t$ bound when the viscosity exists).

Next, with the improved tangential energy estimate, the pressure estimate can be improved from \eqref{press2} to: 
\begin{equation*}
\norm{q}_{\X^2}^2(t)+\norm{\nabla q}_{\X^2}^2(t)\ls M_0+T^{\frac{1}{4}}P\left(\sup_{t\in [0,T]}\mathfrak E(t)\right)+\delta\sup_{t\in [0,T]}\mathfrak E(t).
\end{equation*}
Then, the estimates of the normal derivatives ultimately improve.

Last, the uniqueness can be obtained by a similar approach as for the a priori estimates; the reader is also referred to \cite[Section 6]{GuW_2016}.
\end{proof}
\begin{proof}[Proof of Theorem \ref{vanish}]
	As we explained in Section 4, the difference between our approximate viscosity system and the viscoelasticity is due to the compatibility issue of the initial data. When the inital data are prepared well for visocoelasticity, the difference disappears, our viscosity-indepedent a priori esimtates also valid for the viscoelasticity; then, we justify the inviscid limit.
\end{proof}

%
\section*{Appendix}
In this apprendex, we contrust the smoothed initial data $(\eta_0^\epsilon,v_0^\epsilon)$ satisfying the compatibility condition \eqref{zcomp}, \eqref{1comp}. Moreover, we introduce a smoothing function $\phi^\epsilon$ to make the corresponding $\pa_tv(0)$ (calculated by the equation) satisfy the compatibility condition \eqref{2comp}. Some of this smoothing approach is motivated by Cheng and Shkoller \cite[Section 5.1]{Cheng_Shkoller_10}. 

First, we denote $\mathcal E_\Omega$ as a Sobolev extension operator from $\Omega$ to $\mathbb T\times \bR$, where $\rho_\kappa$ is a 2-D standard mollifier, $\Lambda_\kappa$ is a 1-D standard mollifier, and $\kappa$ is the mollifier parameter.
\subsection*{Smoothing $\eta_0$}
We smooth $\eta_0^\kappa$ by solving the following fourth-order elliptic equation:
\begin{equation*}
	\begin{cases}
		\Delta^2\eta_{0i}^\kappa=\rho_\kappa*\mathcal E_\Omega(\Delta^2\eta_{0i}), &\text{in } \Omega,\\
	\eta_{0i}^\kappa=\Lambda_\kappa*\eta_{0i}, &\text{on } \Gamma,\\
	\pa_2\eta_{0i}^\kappa= \dfrac{-\epsilon_{ij}\pa_1\left(\Lambda_\kappa*\eta_{0j}\right)}{\abs{\pa_1(\Lambda_\kappa*\eta_0)}^2}\,\,&\text{on } \Gamma.
	\end{cases}
\end{equation*}
Here, $\epsilon_{ij}$ is a permutation symbol such that $\epsilon_{12}=1, \epsilon_{21}=-1$ and $\epsilon_{kk}=0$.

Thus, $\eta_0^\kappa \in C^\infty(\overline\Omega)$, and
\begin{equation*}
	\Pi_0^\kappa\pa_2\eta_0^\kappa=\pa_2\eta_0^\kappa-\dfrac{\ak_{\cdot 2}(0)}{\abs{\ak_{\cdot 2}(0)}^2}=0, \,\,J^\kappa_0=1
\end{equation*}
is satisfied on $\Gamma$, which is the zeroth-order compatibility condition.
\subsection*{Smoothing $v_0$}
With the zeroth compatibility condition \eqref{zcomp}, we have 
\begin{equation*}
	\a_{i1}\a_{i2}\Big|_{t=0}=0\,\, \text{on } \Gamma.
\end{equation*}
Then, by straightforward calculation, we see
\begin{equation*}
	2S_{\eta}(v)_{ij}\a_{j2}\Big|_{t=0}=\left(\abs{\a_{\cdot 2}}^2\pa_2v_i-\pa_t\a_{i2}\right)\Big|_{t=0}=\abs{\pa_1\eta_0}^2\left(\pa_2v_{0i}-\dfrac{\pa_t\a_{i2}(0)}{\abs{\pa_1\eta_0}^2}\right)
\end{equation*}
on the boundary $\Gamma$.
Since $v_0$ satisfies 1st-order compatibility condition:
\begin{equation*}
	\Pi_0\left(\pa_2v_{0i}-\dfrac{\pa_t\a_{i2}(0)}{\abs{\pa_1\eta_0}^2}\right)=0,
\end{equation*}
we have $v_0$ that satisfies
\begin{equation*}
	\Pi_0 2S_{\eta_0}(v_0)_{ij}\a_{j2}(0)=0.
\end{equation*}
Thus, we can define $r_0$ by solving 
\begin{equation*}
	\begin{cases}
		-\Delta_{\eta_0}r_0=0, &\text{in } \Omega,\\
		r_0= 2S_{\eta_0}(v_0)_{ij}\dfrac{\a_{j2}(0)\a_{i2}(0)}{\abs{\pa_1\eta_0}^2}, &\text{on } \Gamma.
\end{cases} 
\end{equation*}

Now, we smooth $v_0$ by solving the following Stokes-type equation:
\begin{equation*}
	\begin{cases}
	-\Delta_{\eta^\kappa_0} v_{0}^\kappa+\nabla_{\eta_0^\kappa}r_0^\kappa=\rho_\kappa*\mathcal E_\Omega(-\Delta_{\eta_0} v_0+\nabla_{\eta_0}r_0), &\text{in } \Omega,\\
	\Div_{\eta_0^\kappa} v_0^\kappa=0, &\text{in } \Omega,\\ \left(-2S_{\eta_0^\kappa}v_0^\kappa+r_0^\kappa I\right)N^\kappa_0=0,&\text{on } \Gamma.
	\end{cases}
\end{equation*}
It is clear that above, $v_0^\kappa$ satisfies  
\begin{equation*}
\Pi_0 2S_{\eta^\kappa_0}v_0^\kappa N_0^\kappa=0.
\end{equation*}
Thus, since $J_0^\kappa=1$ on $\Gamma$, the smoothed $v_0^\kappa$ also satisfies the 1st compatibility condition.
\subsection*{Smoothing $\pa_tv(0)$ and introducing $\phi^\kappa$}
For the last step, we smooth $w_1^\kappa$ by solving the Stokes equation
\begin{equation*}
	\begin{cases}
		-\Delta_{\eta^\kappa_0} w_1^\kappa+\nabla_{\eta_0^\kappa}r_1^\kappa=\rho_\kappa*\mathcal E_\Omega(-\Delta_{\eta_0} \pa_tv(0)+\nabla_{\eta_0}r_1), &\text{in } \Omega,\\
\Div_{\eta_0^\kappa} w_1^\kappa=-\partial_{\eta^\kappa_{0i}} v^\kappa_{0j}\partial_{\eta^\kappa_{0j}}{v^\kappa_0}_i, &\text{in } \Omega,\\ \left(-2S_{\eta_0^\kappa}w_1^\kappa+r_1^\kappa I\right)N^\kappa_0=g^\kappa&\text{on } \Gamma.
	\end{cases}
\end{equation*}
where $r_1$ is defined by the following elliptic equation:
\begin{equation*}
	\begin{cases}
		-\Delta_{\eta_0}r_1=0, &\text{in } \Omega,\\
		r_1= \left(2S_{\eta_0}\left(\pa_tv(0)\right)N_0+g\right)\cdot N_0, &\text{on } \Gamma.
\end{cases} 
\end{equation*}
\begin{equation*}
	g_i:=\dfrac{1}{\abs{\pa_1\eta_0}}\left(\abs{\pa_1\eta}^2r_0\pa_t\a_{i2}(0)+2S_{\eta_0}(v_0)_{ij}\pa_t\a_{j2}(0)+\pa_t\a_{i\alpha}(0)\pa_{\alpha}v_{0j}\a_{j2}(0)+\pa_t\a_{j\alpha}(0)\pa_\alpha v_{0i}\a_{j2}(0)\right)
\end{equation*}
\begin{equation*}
g_i^\kappa:=\dfrac{1}{\abs{\pa_1\eta^\kappa_0}}\left(\abs{\pa_1\eta_0^\kappa}^2r_0^\kappa\pa_t\a_{i2}^\kappa(0)+2S_{\eta^\kappa_{0}}(v^\kappa_{0})_{ij}\pa_t\a^\kappa_{j2}(0)+\pa_t\a^\kappa_{i\alpha}(0)\pa_{\alpha}v^\kappa_{0j}\a^\kappa_{j2}(0)+\pa_t\a^\kappa_{j\alpha}(0)\pa_\alpha v^\kappa_{0i}\a^\kappa_{j2}(0)\right)
\end{equation*}
The modification term $\phi^\kappa$ is defined by
\begin{equation*}
	\phi^\kappa=\Delta \eta_0^\kappa-J_0^\kappa\nabla_{\eta^\kappa_0}q_0^\kappa-\omega_1^\kappa.
\end{equation*}
Adding $\phi^\kappa$ to equation \eqref{eq:elastic}$_2$, we can guarantee that the $\pa_tv(0)$ calculated by the equation is equal to $\omega_1^\kappa$, which satisfies the 2nd compatibility condition \eqref{2comp}. 

Last, we take $\kappa=\dfrac{1}{\abs{\ln\epsilon}}$ and re-denote $(\eta_0^\kappa,v_0^\kappa)$ as $(\eta_0^\epsilon, v^\epsilon_0)$ and $\phi^\kappa$ as $\phi^\epsilon$. In this way, by the property of the mollifier
$$\norm{\partial^\ell f^\kappa}_0\ls \dfrac{1}{\kappa^s}\norm{f}_0,$$
we see that $\norm{\sqrt\epsilon f^\kappa}_{s}^2\ls \norm{f}_0$ holds for any $s$ and $\epsilon <\hal$ uniformly.

\section*{Conflict of Interest:}
The authors declare that they have no conflict of interest.

\end{document}